\numberwithin{equation}{section}
\newtheorem{theorem}{Theorem}[section]
\newtheorem{proposition}[theorem]{Proposition}
\newtheorem{corollary}[theorem]{Corollary}
\newtheorem{lemma}[theorem]{Lemma}
\theoremstyle{remark}
\newtheorem{remark}[theorem]{Remark}
\newcommand{\E}{\mathcal{E}}
\newcommand{\B}{\mathcal{B}}
\newcommand{\rg}{\mathrm{Im}}
\newcommand{\R}{R_{0}}
\newcommand{\J}{\mathcal{P}}
\newcommand{\C}{G}
\newcommand{\Px}{\mathbb{P}_x}
\newcommand{\Exx}{\mathbb{E}_x}
\newcommand{\supp}{\operatorname{supp}}
\begin{document}

\begin{frontmatter}

\title{Existence of invariant densities for semiflows with jumps\tnoteref{label1}}

\tnotetext[label1]{This research was supported by the Polish NCN grant no 2014/13/B/ST1/00224}

\author{Weronika Biedrzycka\corref{cor1}}
\cortext[cor1]{Corresponding author}
\ead{wsiwek@us.edu.pl}

\author{Marta Tyran-Kami\'nska\corref{cor2}}
\address{Institute of
Mathematics, University of Silesia, Bankowa 14, 40-007 Katowice, POLAND}
\ead{mtyran@us.edu.pl}

\begin{keyword}piecewise deterministic Markov process\sep  stochastic semigroup\sep invariant density\sep dynamical systems with switching\sep gene expression  models
\MSC[2010] 47D06\sep 60J25\sep 60J99\sep 92C40
\end{keyword}

\begin{abstract} The problem of existence and uniqueness of absolutely continuous invariant measures for a class of  piecewise deterministic Markov processes  is investigated using the theory of substochastic semigroups obtained through the Kato--Voigt perturbation theorem on the $L^1$-space.
We provide a new criterion for the existence of a strictly positive and unique invariant density for such processes. The long time qualitative behavior of the corresponding semigroups is also considered. To illustrate our general results we give a detailed study of a two dimensional model of gene expression with bursting.
\end{abstract}

\end{frontmatter}

\section{Introduction}\label{sec:int}

We study a class of piecewise-deterministic Markov processes (PDMPs) which we call semiflows with jumps.  As defined in \cite{davis84,davis93} a  PDMP   without active boundaries  is determined by three local
characteristics $(\pi,\varphi,\J )$, where $\pi$ is a semiflow describing the deterministic parts of the
process, $\varphi(x)$ is the intensity of a jump from $x$, and $\J (x,\cdot)$ is the distribution of the state
reached by that jump. The problem of existence of invariant measures for Markov processes is of fundamental importance in many applications of stochastic processes \cite{davis93,meyn93,almcmbk94}.

We consider semiflows that arise as solutions of ordinary differential equations
\begin{equation}\label{e:de}
x'(t)=g(x(t)),
\end{equation}
where $g\colon \mathbb{R}^d\to\mathbb{R}^d$ is a (locally) Lipschitz continuous mapping.
We assume that $E$ is a  Borel subset of $\mathbb{R}^d$ such that for each $x_0\in E$ the solution $x(t)$ of
\eqref{e:de} with initial condition $x(0)=x_0$ exists and that $x(t)\in E$ for all $t\ge 0$. We denote this solution
$\pi_tx_0$.  Then the mapping $(t,x_0)\mapsto\pi_tx_0$ is Borel  measurable and satisfies $\pi_0x=x$, $\pi_{t+s}x=\pi_t(\pi_sx)$ for  $x\in E$, $s,t\in\mathbb{R}_+$.   As concern jumps we consider a family of measurable transformations  $T_\theta \colon E\to E$, $\theta\in \Theta$, where $\Theta$ is a metric space  which carries a Borel measure $\nu$, and  a family of measurable functions $p_\theta\colon E\to [0,\infty)$,
$\theta\in \Theta$, satisfying
\[
\int_\Theta p_\theta(x)\nu(d\theta)=1, \quad x\in E,
\]
so that the stochastic kernel $\J $ is of the form
\begin{equation}\label{d:jumpkernel}
\J (x,B)=\int_\Theta 1_B(T_\theta (x))p_\theta(x)\nu(d\theta),\quad  x\in E,
\end{equation}
for $B\in \B(E)$, where $\mathcal{B}(E)$ be the Borel $\sigma$-algebra of subsets of $E$. This roughly means that if the value of the process is $x$ then we jump to the point $T_{\theta}(x)$ with probability $p_{\theta}(x)$.

The following standing assumptions will be made.
The intensity function $\varphi$ is continuous and
\begin{equation}\label{eq:phi2}
\lim_{t\to\infty} \int_{0}^{t}\varphi(\pi_s x)ds=+\infty\quad \text{for all  } x\in E.
\end{equation}
The  mappings  $(\theta,x)\mapsto T_\theta (x)$ and  $(\theta,x)\mapsto p_\theta (x)$ are measurable so that the stochastic kernel in \eqref{d:jumpkernel} is well defined.
We assume also that each mapping $\pi_t\colon E\to E$ as well as  each $T_\theta \colon E\to E$ is nonsingular with respect to a reference measure $m$ on $E$.  Recall that a measurable transformation $T\colon E\to E$ is called \emph{nonsingular} with respect to $m$ if the measure $m\circ T^{-1}$ is absolutely continuous with respect to $m$,
i.e., $m(T^{-1}(B))=0$ whenever  $m(B)=0$.

Let us briefly describe the construction of the PDMP $\{X(t)\}_{t\ge 0}$
with characteristics $(\pi,\varphi,\J )$ (see e.g.~\cite{davis84,davis93} for
details).
Define the function
\begin{equation}\label{eq:cdf}
F_x(t)=1-\exp\{-\int_0^t \varphi(\pi_sx)ds\}, \quad t\ge 0, x\in E,
\end{equation}
and note that the assumptions imposed on $\varphi$ imply that  $F_x$  is a distribution function of a positive
and finite random variable for every $x\in E$. Let $t_0=0$ and let $X(0)=X_0$ be an $E$-valued random variable.
For each $n\ge 1$ we can choose the $n$th \emph{jump time} $t_n$ as a positive random variable satisfying
\[
\Pr(t_n-t_{n-1}\le t|X_{n-1}=x)=F_x(t), \quad t\ge 0,
\]
and we define
\begin{equation*}
X(t)=\left\{
  \begin{array}{ll}
  \pi_{t-t_{n-1}}(X_{n-1}) & \text{ for } t_{n-1}\le t<t_{n},\\
X_n &\text{ for } t=t_n,
\end{array}\right.
\end{equation*}
where the $n$th \emph{post-jump position} $X_n$ is an $E$-valued random variable such that
\[
\Pr(X_n\in B|X(t_{n}-)=x)=\J (x,B),
\]
and $X(t_{n}-)=\lim_{t\uparrow t_n}X(t)=\pi_{t_n-t_{n-1}}(X_{n-1})$. In this way, the trajectory of the process
is defined for all $t<t_\infty:=\lim_{n\to\infty}t_n$ and $t_\infty$ is called the explosion time. To define the
process for all times, we set $ X(t) =\Delta$ for $t\ge t_\infty$, where $\Delta\notin E$ is some extra state
representing a cemetery point for the process. The PDMP $\{X(t)\}_{t\ge 0}$ is  called the \emph{minimal} PDMP corresponding to $(\pi,\varphi,\J)$. It is said to be \emph{non-explosive} if $\Px(t_{\infty}=\infty)=1$ for $m$-almost every ($m$-a.e.) $x\in E$, where $\Px$ is the distribution of the process starting at $X(0)=x$. We denote by $\Exx$ the expectation operator with respect to~ $\Px$.

Our main result is the following.
\begin{theorem}\label{t:cont}
Assume that the chain $(X(t_n))_{n\ge 0}$ has only one invariant probability measure $\mu_*$  absolutely continuous
with respect to~$m$. If the density $f_*=d\mu_{*}/dm$ is strictly positive  a.e. then the process $\{X(t)\}_{t\ge 0}$ is non-explosive and it can have at most one invariant probability measure absolutely continuous with respect to~$m$. Moreover, if
\begin{equation}\label{eq:R0v}
\int_{E} \Exx(t_1)f_*(x) m(dx) < \infty,
\end{equation}
then the process $\{X(t)\}_{t\geq0}$ has a unique invariant density and it is strictly positive a.e.
\end{theorem}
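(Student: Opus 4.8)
The plan is to reduce everything to the embedded jump chain through the substochastic semigroup that governs the process. I would set up, as in the Kato--Voigt construction, the decomposition on $L^1(E,m)$ in which the flow together with the killing at rate $\varphi$ generates a substochastic semigroup $\{S(t)\}_{t\ge0}$ with generator $A$, the gain-from-jumps operator $B$ is positive with $\int_E Bf\,dm=\int_E\varphi f\,dm$ for $f\ge0$, and the minimal semigroup $\{P(t)\}_{t\ge0}$ of the process is generated by $\overline{A+B}$. On the positive cone one has $(-A)^{-1}f=\int_0^\infty S(t)f\,dt$ as a nonnegative (possibly non-integrable) function, and \eqref{eq:phi2} guarantees that $K:=B(-A)^{-1}$ is a \emph{stochastic} operator on $L^1$. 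The first point I would record is that $K$ is exactly the transfer (Frobenius--Perron) operator of the chain $(X(t_n))_{n\ge0}$: composing ``run the flow until a jump, weighted by the survival density'' with the jump kernel $\J$ reproduces its one-step transition. Hence the hypothesis says precisely that $K$ has a unique invariant density $f_*=d\mu_*/dm$, with $Kf_*=f_*$ and $f_*>0$ a.e.

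For non-explosiveness I would argue probabilistically. Under $\Px$ with $x$ distributed as $\mu_*$, the inter-jump times $\tau_n:=t_n-t_{n-1}$ form a stationary sequence with $\Ex(\tau_n\mid X_{n-1}=x)=\Exx(t_1)$, and each $\tau_n$ is strictly positive because $F_x(0)=0$. Uniqueness of $\mu_*$ makes the chain ergodic, so by the ergodic theorem $t_n/n$ converges a.s.\ to $\int_E\Exx(t_1)\,\mu_*(dx)\in(0,\infty]$; in either case $t_n\to\infty$, that is $t_\infty=\infty$ almost surely, so $\Px(t_\infty=\infty)=1$ for $\mu_*$-a.e.\ $x$. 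Since $f_*>0$ a.e.\ forces $\mu_*\sim m$, non-explosiveness holds for $m$-a.e.\ $x$; equivalently, $\{P(t)\}$ is stochastic (honest). This step uses only positivity of $f_*$, not \eqref{eq:R0v}.

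The uniqueness of an absolutely continuous invariant measure I would obtain from a bijection between the stationary densities of $\{P(t)\}$ and those of $K$. Formally, if $\bar f$ is an invariant density then $\overline{A+B}\bar f=0$ gives $h:=(-A)\bar f=B\bar f$ with $Kh=B(-A)^{-1}h=B\bar f=h$, so $h$ is $K$-invariant; conversely $(-A)^{-1}$ sends a $K$-invariant density back to a $P(t)$-invariant one. Because $K$ has the single invariant density $f_*$, every invariant density of the process must be a scalar multiple of $\int_0^\infty S(t)f_*\,dt$, and two probability densities proportional to the same function coincide, whence at most one. Making this correspondence rigorous is the main obstacle: since $(-A)^{-1}$ is unbounded, an invariant $\bar f$ need not lie in $D(A)$, so the implication $\overline{A+B}\bar f=0\Rightarrow Kh=h$ must be carried out at the level of the closed generator, and it is exactly the honesty established above that rules out elements of $\Ker(\overline{A+B})$ unaccounted for by $K$.

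Finally, for existence and positivity under \eqref{eq:R0v}, the candidate is $\hat f:=\int_0^\infty S(t)f_*\,dt=(-A)^{-1}f_*$, and its mass is
\[
\|\hat f\|_{1}=\int_0^\infty\|S(t)f_*\|_{1}\,dt=\int_E\Big(\int_0^\infty\Px(t_1>t)\,dt\Big)f_*(x)\,m(dx)=\int_E\Exx(t_1)f_*(x)\,m(dx),
\]
which is finite precisely when \eqref{eq:R0v} holds. Normalizing $\hat f$ yields an invariant density of the process, and strict positivity follows since $S(t)$ preserves positivity and $S(t)f_*\to f_*>0$ as $t\to0^+$, so $\hat f>0$ a.e. The previous paragraph then identifies it as the unique invariant density.
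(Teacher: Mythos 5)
Your overall architecture matches the paper's: the Kato--Voigt minimal semigroup, the identification of $K$ with the transfer operator of the jump chain (the paper's Theorem~\ref{thm:exist}(1)), the candidate density $\R f_*=\int_0^\infty S(t)f_*\,dt$, and the norm identity $\|\R f_*\|=\int_E\Exx(t_1)f_*(x)\,m(dx)$, which is verbatim the computation in Theorem~\ref{t:contsemigroup}. Your non-explosiveness argument is genuinely different and essentially sound: the paper deduces honesty functional-analytically (mean ergodicity of $K$, which follows from $Kf_*\le f_*$ with $f_*>0$ a.e., implies the minimal semigroup is stochastic, and then Theorem~\ref{thm:exist}(3) gives non-explosion), whereas you run the stationary chain under $\mu_*$ and apply Birkhoff's theorem to the inter-jump times; note you do not even need ergodicity, since the a.s.\ limit of $t_n/n$ is a conditional expectation of an a.s.\ positive variable and hence a.s.\ positive.

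The genuine gap is in the uniqueness step, and your own diagnosis points at it: an invariant density $\bar f$ of the semigroup lies only in $\mathcal{D}(\C)$, and your proposed fix --- that honesty ``rules out elements of $\Ker(\overline{A+B})$ unaccounted for by $K$'' --- is not the mechanism that works. Honesty gives approximants $f_n\in\mathcal{D}(A)$ with $f_n\to\bar f$ and $(A+B)f_n\to 0$, but with no positivity or monotonicity control you cannot pass to the limit in the unbounded gain operator $Bf=P(\varphi f)$; a priori you do not even know $\varphi\bar f\in L^1$, so $h:=B\bar f$ is not defined as an element of $L^1$ and the identity $(-A)\bar f=B\bar f$ is circular, since it presupposes $\bar f\in\mathcal{D}(A)$. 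What the paper actually does (Theorem~\ref{thm:exunifpi}) is approximate $\bar f$ monotonically through the resolvent series \eqref{eq:rp}, $\tilde f_n=R(\lambda,A)\sum_{k=0}^n(P(\varphi R(\lambda,A)))^kf_0\uparrow\bar f$, which shows that $P(\varphi\bar f)$, defined as a pointwise monotone limit, is finite a.e.\ and subinvariant for the extended operator $K$ --- a statement that needs no honesty at all. The integrability of $\varphi\bar f$ then comes not from honesty but from the unique mean ergodicity of $K$ (available under your hypotheses, since $f_*$ is the unique invariant density and $f_*>0$ a.e.): Proposition~\ref{p:exsub} forces every a.e.-finite $K$-subinvariant function to be integrable, whence $\varphi\bar f\in L^1$, $\bar f\in\mathcal{D}(A)$, and only at that point does your kernel computation with $\Ker(A)=\{0\}$ (strong stability of $\{S(t)\}$, from \eqref{eq:phi2}) close the argument, as in Corollary~\ref{c:unique}. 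A smaller instance of the same omission occurs at the end: that normalizing $\R f_*$ ``yields an invariant density'' needs either the subinvariance argument of Theorem~\ref{thm:exunifp} or the identification $Kf_*=P(\varphi\R f_*)$ of Corollary~\ref{c:exunifp}, which rests on a closed-operator argument for $M_\varphi$; this piece is routine to fill, but the uniqueness correspondence is real missing content, not a formality.
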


The problem of existence and uniqueness of an invariant probability measure for the process $\{X(t)\}_{t\geq0}$ with comparison to the similar problem for the chain $(X(t_n))_{n\ge 0}$ was studied in \cite{costa90} in the context of general PDMPs with boundaries and under some technical assumptions. We also refer the reader to \cite{dufourcosta99,costadufour08} for the study of
equivalence between stability properties of continuous time processes and yet another discrete time processes associated with them. Here we concentrate on the existence of absolutely continuous invariant measures and we make use of the results from \cite{tyran09}. That is why we need to assume that the semiflow $\{\pi_t\}_{t\ge 0}$ satisfies $\pi_t(E)\subseteq E$ for all $t\ge 0$ (this implies that there are no active boundaries)  and that the stochastic kernel $\J $ describing jumps gives rise to a transition operator $P$ on $L^1$ (see \eqref{d:top}) so that we can use \cite[Theorem 5.2]{tyran09}. In particular, the kernel $\J $ as in \eqref{d:jumpkernel} has the required property and covers many interesting examples. However, any refinements entail considerable mathematical difficulties and are currently under research.

We study  the continuous time process with the help of a strongly continuous semigroup of positive contraction operators  $\{P(t)\}_{t\ge 0}$  (\emph{substochastic semigroup}) on the $L^1$ space of  functions  integrable with respect to the measure $m$.
The semigroup can be obtained from the Kato--Voigt perturbation theorem for substochastic semigroups on $L^1$-spaces and this functional analytic  framework  is recalled in Section \ref{ssec:evol} as Theorem~\ref{thm:kato}.
Using results from \cite{tyran09}, this gives that the chain $(X(t_n))_{n\ge 0}$
has the property that there exists a unique linear operator $K$ (\emph{stochastic operator}) on $L^1$  which satisfies: if the distribution of the random variable $X(0)$ has a density $f$, i.e.,
\[
\Pr(X(0)\in B)=\int_B  f(x)m(dx),\quad B\in \mathcal{B}(E),
\]
then $X(t_1)$ has a density $K f$.   Hence, the density $f_*$ in Theorem~\ref{t:cont} is  invariant for the operator $K$. Sufficient conditions for the existence of only one invariant density for stochastic operators are described in Section~\ref{sec:SSG} and are based on~\cite{rudnicki95,rudnickipichortyran02}. Section \ref{ssec:evol} presents relationships between invariant densities for the semigroup $\{P(t)\}_{t\ge0}$  and  for the operator~$K$. Here the most important results are obtained in Theorems~\ref{thm:exunifp} and~\ref{thm:exunifpi} and give   Corollary~\ref{c:unique} which is our main tool in the proof of Theorem~\ref{t:cont}.
Theorems~\ref{thm:exunifp}  and~\ref{thm:exunifpi} together with Corollaries~\ref{c:exunifp} and~\ref{c:exunifpi} should be compared with \cite[Theorems 1 and 2]{costa90} and \cite[Theorem 5]{mokhtar15}. However, we need not to assume  that the process is non-explosive and we look for absolutely continuous subinvariant measures. Moreover, in \cite{mokhtar15} a perturbed substochastic semigroup is obtained with the help of Desch's theorem \cite{desch}, which in our setting becomes a particular case of Theorem~\ref{thm:kato}.

If for some $t>0$ and for $x$ from a set of positive Lebesgue measure the absolutely continuous part in the Lebesgue decomposition of the measure $\Px(X(t)\in \cdot)$ is nontrivial, then the semigroup $\{P(t)\}_{t\ge 0}$ is partially integral as in \cite{pichorrudnicki00}. This allows us to combine Theorem~\ref{t:cont}
with~\cite[Theorem 2]{pichorrudnicki00}, recalled in Section~\ref{sec:SSG} as  Theorem~\ref{thm:pr00}, to  obtain  asymptotic stability of the semigroup $\{P(t)\}_{t\ge 0}$, i.e., the density of $X(t)$ converges to the invariant density in $L^1$ irrespective of the density of $X(0)$.  In that case condition \eqref{eq:R0v} appears to be not only sufficient but also necessary for the existence of an invariant density for the process, see Corollary~\ref{cor:as_stab}.

In Section~\ref{sec:pdmp} we provide sufficient conditions for existence of a unique invariant density for the Markov chain $(X(t_n))_{n\ge 0}$ in terms of the local characteristics of the semiflow with jumps. We also show that  dynamical systems with random switching  evolving in $\mathbb{R}^d\times I$ with  a finite set $I$, as in \cite{pichorrudnicki00,bakhtinhurth12,benaim12}, can be studied with our methods. Section \ref{sec:example} contains a detailed study of a two dimensional model of gene expression with bursting illustrating applicability of our results. Our framework can be used to analyze biological processes  described by PDMPs,  see e.g.  \cite{friedman06,mackeytyran08,mcmmtkry11,mcmmtkry13} for gene regulatory dynamics with bursting and \cite{lipniacki06,rudnicki07,zeiserfranz10,tomski15,rudnickityran15} for dynamics  with switching.

\section{Asymptotic behavior of stochastic operators and semigroups}\label{sec:SSG}

Let $(E,\E,m)$ be a $\sigma$-finite
measure space and $L^1=L^1(E,\E,m)$ be the space of integrable functions. We denote by $D(m)\subset L^1$ the set
of all \emph{densities} on $E$, i.e.
\[
D(m)=\{f\in L^1_+: \|f\|=1\}, \quad \text{where } L^1_+=\{f\in L^1: f\ge 0\},
\]
and $\|\cdot\|$ is the norm in $L^1$.  A linear operator $P\colon L^1\to L^1$ such that $P(D(m))\subseteq D(m)$
is called \emph{stochastic} or \emph{Markov}~\cite{almcmbk94}. It is called \emph{substochastic} if $P$ is a
positive contraction, i.e., $Pf\ge 0$  and $\|Pf\|\le \|f\|$ for all $f\in L_+^1$.

 If $T\colon E\to E$ is nonsingular then there exists a unique stochastic operator $\widehat{T}\colon  L^1\to L^1$ satisfying
\[
\int_{B}\widehat{T}f(x) m(dx)=\int_{T^{-1}(B)}f(x) m(dx)
\]
for all $B\in\E$ and $f\in D(m)$. The operator $\widehat{T}$ is usually called \cite{almcmbk94} the \emph{Frobenius-Perron} operator corresponding to $T$. In particular, if $T\colon E\to E$ is one-to-one and nonsingular with respect to $m$, then
\[
\widehat{T}f(x)=1_{T(E)}(x)f(T^{-1}(x))\frac{d (m\circ T^{-1})}{dm}(x)\quad \text{for $m$-a.e. }x\in E,
\]
where $d(m\circ T^{-1})/dm$ is the Radon-Nikodym derivative of the measure $m\circ T^{-1}$ with respect to $m$.

Let $\J \colon E\times \E\to[0,1]$ be a \emph{stochastic transition kernel}, i.e., $\J (x,\cdot)$ is a
probability measure for each $x\in E$ and the function $x\mapsto\J (x, B)$ is measurable for each $B\in\E$, and
let $P$ be a stochastic operator on $ L^1$. If
\begin{equation}\label{d:top}
\int_E \J (x,B)f(x)m(dx)=\int_B Pf(x)m(dx)
\end{equation}
for all $B\in \E, f\in D(m)$,
then $P$ is called the \emph{transition} operator corresponding to $\J $.
A stochastic operator $P$ on $ L^1$ is called \emph{partially integral} or \emph{partially kernel} if there
exists a measurable function $p\colon E\times E\to[0,\infty)$ such that
$$
\int_E\int_E p(x,y)\,m(dx)\, m(dy) >0 \quad\text{and}\quad P f(x) \ge \int_E p(x,y)f(y)\, m(dy)
$$
for $m$-a.e.~$x\in E$ and for every density $f$.

We can extend a substochastic operator $P$ beyond the space $ L^1$  in the following way.
If $0\le f_n\le f_{n+1}$, $f_n\in L^1$, $n\in\mathbb{N}$, then the pointwise almost everywhere limit of $f_n$
exists and will be denoted by $\sup_n f_n$. For  $f\ge 0$  we define
\[
Pf=\sup_n Pf_n \quad \text{for }  f=\sup_n f_n, f_n\in L^1_+.
\]
(Note that $Pf$ is independent of the particular approximating sequence $f_n$ and that $Pf$ may be infinite.)
Moreover, if $P$ is the transition operator corresponding to $\J $ then \eqref{d:top} holds for all measurable
nonnegative $f$.  A~nonnegative measurable $f_*$ is said to be \emph{subinvariant
(invariant)} for a substochastic operator $P$ if
$Pf_*\le f_*$ ($Pf_*=f_*$). Note that if $f_*$ is a subinvariant density for a stochastic operator $P$ then $f_*$ is invariant for~$P$.

A substochastic operator $P$ is called \emph{mean ergodic} if
\begin{equation*}
\lim_{N\to\infty}\frac{1}{N}\sum_{n=0}^{N-1} P^n f \quad \text{exists for all } f\in  L^1.
\end{equation*}
If a substochastic operator has a subinvariant density $f_*$ with $f_*>0$ a.e., then it is mean ergodic
(see e.g.~\cite[Lemma 1.1 and Theorem 1.1]{kornfeldlin00}).
We say that a stochastic operator is \emph{uniquely mean ergodic} if there is an invariant density $f_*$ such that
\begin{equation}\label{d:ume}
\lim_{N\to\infty}\frac{1}{N}\sum_{n=0}^{N-1} P^n f=f_*\|f\| \quad \text{for all } f\in L^1_{+}.
\end{equation}
In particular, if $P$ has a unique invariant density $f_*$ and $f_*>0$ a.e. then $P$ is uniquely mean ergodic (see e.g. \cite[Theorem 5.2.2]{almcmbk94}).
Moreover, an operator with this property can not have a non-integrable subinvariant function as the  following result shows. For any measurable  $f$ the \emph{support} of $f$ is defined up to sets of measure $m$ zero by \[
\supp f = \{x\in E: f(x)\neq 0\}. \]

\begin{proposition}\label{p:exsub}
Suppose that a stochastic operator $P$ is uniquely mean
ergodic with an invariant density $f_*$. If $\tilde{f}_*$  is subinvariant for $P$ and  $m(\supp f_{*}\cap \{x:\tilde{f}_*(x)<\infty\})>0$,  then $\tilde{f}_*\in L^1$.
\end{proposition}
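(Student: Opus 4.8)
The plan is to combine monotonicity of $P$ with the mean ergodic limit \eqref{d:ume} so as to squeeze $\tilde f_*$ from below by a fixed multiple of $f_*$, and then extract from this an a priori bound on the $L^1$-norm of \emph{every} integrable minorant of $\tilde f_*$; approximating $\tilde f_*$ from below and invoking monotone convergence will then force $\tilde f_*\in L^1$.

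First I would note that subinvariance propagates. Since $P$ is positive, its extension to nonnegative measurable functions is monotone, so from $P\tilde f_*\le\tilde f_*$ an easy induction gives $P^n\tilde f_*\le\tilde f_*$ for all $n$. Consequently, for any $g\in L^1_+$ with $g\le\tilde f_*$, monotonicity yields $P^n g\le P^n\tilde f_*\le\tilde f_*$ a.e., and hence each Cesàro average $A_N g=\frac{1}{N}\sum_{n=0}^{N-1}P^n g$ also satisfies $A_N g\le\tilde f_*$ a.e. On the other hand, unique mean ergodicity \eqref{d:ume} gives $A_N g\to\|g\|\,f_*$ in $L^1$; passing to a subsequence that converges a.e. and using $A_N g\le\tilde f_*$, I obtain the pointwise inequality $\|g\|\,f_*\le\tilde f_*$ a.e., valid for every integrable minorant $g$ of $\tilde f_*$.

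Now I would restrict attention to the set $A=\supp f_*\cap\{x:\tilde f_*(x)<\infty\}$, which has $m(A)>0$ by hypothesis. On $A$ one has $f_*>0$ and $\tilde f_*<\infty$, so the inequality above rearranges to $\|g\|\le\tilde f_*(x)/f_*(x)$ for a.e.\ $x\in A$. Since $\tilde f_*/f_*$ is finite a.e.\ on a set of positive measure, its essential infimum over $A$ is a finite constant $c$, and therefore $\|g\|\le c$ for every $g\in L^1_+$ with $g\le\tilde f_*$, with $c$ independent of $g$. Finally, choosing an increasing sequence $g_k\in L^1_+$ with $g_k\uparrow\tilde f_*$ a.e.\ (for instance $g_k=\min(\tilde f_*,k)\,1_{E_k}$ for an exhaustion $E_k\uparrow E$ by sets of finite measure), monotone convergence gives $\int_E\tilde f_*\,dm=\lim_k\|g_k\|\le c<\infty$, so that $\tilde f_*\in L^1$.

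The main obstacle is the transition from the $L^1$ ergodic limit to a genuinely pointwise comparison with $\tilde f_*$: this requires both the subsequential a.e.\ convergence of $A_N g$ and the fact that each average remains below $\tilde f_*$. The hypothesis $m(A)>0$ enters decisively only at the last stage, guaranteeing that the essential infimum of $\tilde f_*/f_*$ over $A$ is finite and thus that the norm bound on the minorants is uniform; without it the argument would produce no finite constant $c$.
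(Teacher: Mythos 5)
Your proof is correct and is essentially the paper's argument: the paper's own proof is a single sentence declaring the proposition a direct consequence of \eqref{d:ume} and the $\sigma$-finiteness of $m$, and your argument is precisely the filled-in version of that remark. In particular, the pointwise comparison $\|g\|f_*\le\tilde{f}_*$ for integrable minorants $g$ (via an a.e.-convergent subsequence of the Ces\`aro averages) and the exhaustion $g_k=\min(\tilde{f}_*,k)1_{E_k}$ with monotone convergence are exactly where \eqref{d:ume} and $\sigma$-finiteness enter.
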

\begin{proof}
It is a direct consequence of~\eqref{d:ume} and the fact that the measure $m$ is $\sigma$-finite.
\end{proof}

To prove that an operator has a unique strictly positive invariant density we use the approach  from \cite{rudnicki95,rudnickipichortyran02}.
A stochastic operator $P$ is called \emph{sweeping} with respect to a set $B\in\E$ if
\begin{equation*}\label{sweeps1}
\lim_{n\to\infty}\int_{B} P^n f(x)m(dx)=0\quad\text{for all }f\in D(m).
\end{equation*}
 From Lemma 2 and Theorem 2 of \cite{rudnicki95} we obtain the following result
\begin{theorem}\label{t:inv}
Let $E$ be a metric space and $\E=\mathcal{B}(E)$ be the $\sigma$-algebra of Borel subsets of $E$.
Suppose that $P$ is the transition operator corresponding to the stochastic kernel $\J$ satisfying the following conditions
\begin{enumerate}[\upshape(a)]
\item\label{cond:comms} there is no $P$-\emph{absorbing sets}, i.e., there does not exist a set $B\in\E$ such that $m(B)>0$, $m(E\setminus B)>0$ and
$\mathcal{P}(x,B)\geq 1_B(x)$ for $m$-a.e. $x\in E$,
\item\label{cond:funs} for every $x_0\in E$ there exist $\delta>0$, a nonnegative measurable function $\eta$ satisfying $\int \eta(y)m(dy)>0$,
and a positive integer $n$ such that
\[
 \mathcal{P}^n(x,B) \ge 1_{B(x_0,\delta)}(x)\int_B \eta(y)m(dy)
\]
for $m$-a.e.~$x\in E$ and all $B\in \mathcal{B}(E)$, where $B(x_0,\delta)$ is the ball with center at $x_0$ and radius~$\delta$.
\end{enumerate}
Then either $P$ is sweeping with respect to compact sets or $P$ has an invariant density $f_*$.
In the latter case, $f_*$ is unique and $f_*>0$ a.e.
\end{theorem}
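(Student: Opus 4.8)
The plan is to import the sweeping/non-sweeping alternative from the general theory of partially integral Markov operators, and then to extract strict positivity and uniqueness of the invariant density directly from the two structural hypotheses.

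First I would rewrite condition (b) at the operator level. Iterating the defining relation \eqref{d:top} shows that the $n$-step transition operator $P^n$ corresponds to the $n$-step kernel $\J^n$, so for every density $f$ and every $B\in\E$,
\[
\int_B P^n f(x)\,m(dx)=\int_E \J^n(x,B)f(x)\,m(dx)\ge\Big(\int_{B(x_0,\delta)}f(x)\,m(dx)\Big)\int_B\eta(y)\,m(dy).
\]
Hence $P^n f\ge \eta\int_{B(x_0,\delta)}f(x)\,m(dx)$ a.e.: whenever $f$ charges the ball $B(x_0,\delta)$, its $n$-th image dominates a fixed nonzero nonnegative function $\eta$. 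In particular $P$ is partially integral in the sense recalled above, and such a lower bound is available in a neighborhood of every point of $E$. This is exactly the input required by \cite[Theorem 2]{rudnicki95}.

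Next I would invoke that result essentially as a black box: for a partially integral stochastic operator admitting local lower bounds of the form just obtained, either $P$ has an invariant density or $P$ is sweeping with respect to compact sets. I expect the proof of this alternative --- which rests on the Hopf conservative/dissipative decomposition, and on the fact that the spreading of the lower bounds $\eta$ forces a $\sigma$-finite invariant measure on the conservative part to be a genuine density rather than an infinite measure --- to be the main obstacle, and it is precisely the part I would borrow from \cite{rudnicki95} rather than reprove.

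It remains to treat the case where an invariant density $f_*$ exists, and here I would argue directly from condition (a). For positivity set $A=\supp f_*$. Taking $f=f_*$ and $B=E\setminus A$ in \eqref{d:top} and using $Pf_*=f_*=0$ off $A$ gives $\int_A \J(x,E\setminus A)f_*(x)\,m(dx)=0$; since $f_*>0$ on $A$ this forces $\J(x,E\setminus A)=0$, i.e. $\J(x,A)\ge 1_A(x)$, for $m$-a.e.\ $x$, so $A$ is $P$-absorbing. As $m(A)>0$, condition (a) yields $m(E\setminus A)=0$, that is $f_*>0$ a.e. For uniqueness, let $f_1,f_2$ be invariant densities and put $h=f_1-f_2$, so $Ph=h$. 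From $h^+\ge h$ and positivity of $P$ we get $Ph^+\ge h^+$, while $\|Ph^+\|=\|h^+\|$ because $P$ preserves the $L^1$-norm of nonnegative functions; hence $Ph^+=h^+$, and likewise $Ph^-=h^-$. Thus $h^+$ and $h^-$ are invariant, with supports contained in the disjoint sets $\{f_1>f_2\}$ and $\{f_1<f_2\}$. If either were nonzero, its normalization would be an invariant density, hence strictly positive a.e.\ by the previous step, contradicting disjointness; so $h^+=h^-=0$ and $f_1=f_2$.
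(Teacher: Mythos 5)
Your proposal is correct, and it is essentially the paper's route: the paper gives no self-contained proof at all, stating only that the theorem follows ``from Lemma 2 and Theorem 2 of [rudnicki95]'', so both you and the authors outsource the hard dichotomy (no invariant density implies sweeping from compact sets) to Rudnicki's Theorem 2. The one genuine difference is that where the paper also cites Lemma 2 of that reference for strict positivity and uniqueness, you reprove these directly, and your arguments are sound: the operator-level translation of condition (b), namely $P^n f\ge \eta\int_{B(x_0,\delta)}f\,dm$, is exactly Rudnicki's condition (K) and follows correctly from iterating \eqref{d:top} (which, as the paper notes, extends to all nonnegative measurable integrands); your positivity step correctly shows $\supp f_*$ would be a $P$-absorbing set, so condition (a) forces $m(E\setminus\supp f_*)=0$; and your uniqueness step is the standard lattice argument ($Ph^+\ge h^+$ plus norm preservation gives $Ph^+=h^+$, then disjointness of $\supp h^+$ and $\supp h^-$ contradicts the a.e.\ positivity just established). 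Two small points worth making explicit if you write this up: the dichotomy in Rudnicki's Theorem 2 indeed needs only condition (K) and not condition (a), so your division of labor is legitimate; and in the uniqueness step, if exactly one of $h^+,h^-$ is nonzero you should note that $\int h\,dm=0$ already forces both to vanish, which closes the case your ``contradicting disjointness'' phrase glosses over. What your version buys is a self-contained treatment of the elementary half of the theorem, at the cost of no additional generality; what the paper's citation buys is brevity, since Lemma 2 of [rudnicki95] covers precisely these two facts.
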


In order to exclude sweeping we can use a Foster--Lyapunov drift condition \cite{meyn93,rudnicki97b}. For the proof of the following see e.g. \cite{tyran03}.
\begin{proposition}\label{l:sweeping}
Let $P$ be the transition operator corresponding to a stochastic transition kernel $\mathcal{P}$. Assume that the following condition holds
\begin{enumerate}[\upshape (a)]
\item[\upshape(c)]
there exist a set $B_0$, two positive constants $c_1$, $c_2$, and a nonnegative measurable function $V$ satisfying $m(x:V(x)<\infty)>0$ and
\begin{equation}\label{cond:Lap}
\int_{E}V(y)\J(x,dy)\leq V(x)-c_1+c_21_{B_0}(x),\quad x\in E.
\end{equation}
\end{enumerate}
Then
\[
\liminf_{N\rightarrow\infty}\frac{1}{N}\sum_{n=0}^{N-1}\int_{B_0}P^nf(x)m(dx)\geq\frac{c_1}{c_2}>0
\]
for all $f\in D(m)$ such that $\int_E V(x)f(x)m(dx)<\infty$. In particular, $P$ is not sweeping with respect to the set~$B_0$.
\end{proposition}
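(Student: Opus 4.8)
The plan is to exploit the duality between the transition operator $P$ and the kernel $\J$ encoded in \eqref{d:top}, converting the pointwise drift inequality in (c) into a quantitative lower bound on the time spent in $B_0$. First I would record an extended form of \eqref{d:top}. For a fixed density $f$, the two set functions $B\mapsto\int_E\J(x,B)f(x)\,m(dx)$ and $B\mapsto\int_B Pf(y)\,m(dy)$ are measures on $E$ that agree on all of $\E$ by \eqref{d:top}; hence they integrate any nonnegative measurable $V$ to the same value, and an application of Tonelli on the left gives
\[
\int_E\Big(\int_E V(y)\,\J(x,dy)\Big)f(x)\,m(dx)=\int_E V(y)\,Pf(y)\,m(dy).
\]
This is the only place where the kernel structure enters, and it is the identity that makes the Lyapunov argument work.

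Next I would integrate the drift condition \eqref{cond:Lap} against $f(x)\,m(dx)$ for a fixed density $f$ with $\int_E V f\,dm<\infty$. Using the identity above on the left-hand side together with $\int_E f\,dm=\|f\|=1$, this yields
\[
\int_E V\,Pf\,dm\le \int_E V f\,dm-c_1+c_2\int_{B_0}f\,dm.
\]
I then iterate by replacing $f$ with the density $P^nf$. Writing $a_n=\int_E V(y)P^nf(y)\,m(dy)$ and $b_n=\int_{B_0}P^nf(x)\,m(dx)$, the recurrence $a_{n+1}\le a_n-c_1+c_2b_n$ holds for every $n$. Since $b_n\le\|P^nf\|\le 1$ and $a_0<\infty$ by hypothesis, an immediate induction shows each $a_n$ is finite, so the recurrence is meaningful at every stage.

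Finally, rewriting the recurrence as $c_2 b_n\ge a_{n+1}-a_n+c_1$ and summing over $n=0,\dots,N-1$ telescopes to $c_2\sum_{n=0}^{N-1}b_n\ge a_N-a_0+Nc_1\ge Nc_1-a_0$, where I use $a_N\ge 0$ because $V\ge 0$ and $P^Nf\ge 0$. Dividing by $N$ and letting $N\to\infty$ kills the term $a_0/(c_2N)$ and produces $\liminf_{N}\frac1N\sum_{n=0}^{N-1}b_n\ge c_1/c_2$, which is exactly the claimed bound. Since $c_1/c_2>0$, this Ces\`aro liminf is strictly positive, so the quantities $\int_{B_0}P^nf\,dm$ cannot converge to $0$; thus $P$ is not sweeping with respect to $B_0$. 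The one delicate point is the passage to an unbounded, possibly non-integrable $V$: the duality identity must be obtained from equality of measures in the set argument (monotone approximation / Tonelli) rather than a bounded-function pairing, and one must check that $a_n$ remains finite along the iteration — which is precisely what the assumption $\int_E V f\,dm<\infty$, combined with $b_n\le 1$, guarantees.
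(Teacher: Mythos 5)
Your proof is correct and is precisely the standard Foster--Lyapunov telescoping argument: the paper itself offers no internal proof of Proposition~\ref{l:sweeping}, deferring to \cite{tyran03}, and your route (extend the duality \eqref{d:top} to nonnegative measurable $V$ by monotone approximation, integrate the drift inequality against $P^n f\,dm$, propagate finiteness of $a_n=\int_E V\,P^nf\,dm$ by induction, telescope, discard $a_N\ge 0$) is the same computation, with the two genuine subtleties—unbounded $V$ and finiteness along the iteration—handled correctly. The only point to make explicit is that the final non-sweeping conclusion requires at least one density $f$ with $\int_E Vf\,dm<\infty$; this follows from the hypothesis $m(x:V(x)<\infty)>0$ together with $\sigma$-finiteness of $m$ (take $f$ proportional to $1_A$ for some $A\subseteq\{x:V(x)\le k\}$ with $0<m(A)<\infty$).
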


We conclude this section with the notion of stochastic semigroups and a general result from \cite{pichorrudnicki00} concerning possible asymptotic behavior of such semigroups.
A family of substochastic (stochastic) operators $\{P(t)\}_{t\ge 0}$ on $L^1$  which is a
$C_0$-\emph{semigroup}, i.e.,
\begin{enumerate}[\upshape (1)]
\item $P(0)=I$ (the identity operator);
\item $P(t+s)=P(t)P(s)$ for every $s,t\ge 0$;
\item for each $f\in L^1$ the mapping $t\mapsto P(t)f$ is continuous: for each $s\ge 0$
\[
\lim_{t\to s^{+}}\|P(t)f-P(s)f\|=0;
\]
\end{enumerate}
is called a \emph{substochastic (stochastic) semigroup}. A nonnegative measurable $f_*$ is said to be
\emph{subinvariant (invariant)} for the semigroup $\{P(t)\}_{t\ge 0}$ if it is subinvariant (invariant) for each
operator $P(t)$.

 A
stochastic semigroup $\{P(t)\}_{t\ge 0}$ is called \emph{asymptotically stable} if it has an invariant
density $f_*$ such that
\begin{equation*}\label{d:as}
\lim_{t\to\infty}\|P(t) f-f_*\|=0\quad\text{for all }f\in D(m)
\end{equation*}
and  \emph{partially integral} if, for some $s>0$, the operator $P(s)$ is partially integral.

\begin{theorem}[\cite{pichorrudnicki00}]\label{thm:pr00}
Let  $\{P(t)\}_{t\ge 0}$ be a partially integral stochastic semigroup. Assume that the semigroup $\{P(t)\}_{t\ge
0}$ has only one invariant density $f_*$. If  $f_*>0$ a.e. then the semigroup $\{P(t)\}_{t\ge 0}$ is
asymptotically stable.
\end{theorem}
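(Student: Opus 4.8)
The statement is quoted from \cite{pichorrudnicki00}, so here I only indicate the strategy of its proof. The plan is to run the classical lower-function (Lasota--Mackey) criterion for asymptotic stability of stochastic semigroups: it is enough to produce a single nontrivial $h\in L^1_+$ with $\|h\|>0$ such that $\lim_{t\to\infty}\|(P(t)f-h)^-\|=0$ for every $f\in D(m)$. By the lower-function criterion the existence of such $h$ already implies that $\{P(t)\}_{t\ge0}$ is asymptotically stable, and by the assumed uniqueness of the invariant density the limit must be $f_*$, giving $\lim_{t\to\infty}\|P(t)f-f_*\|=0$ for all $f\in D(m)$. Thus the whole problem reduces to the construction of a common lower function $h$.

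To build $h$ I would first exploit partial integrality. Fix $s>0$ for which $P(s)$ is partially integral, with kernel $p$ obeying $\iint_{E\times E}p\,dm\,dm>0$ and $P(s)g\ge\int_E p(\cdot,y)g(y)m(dy)$ for all densities $g$. By $\sigma$-finiteness and positivity of the double integral there are sets $A,B\in\E$ of positive finite measure and a constant $\varepsilon>0$ with $p(x,y)\ge\varepsilon$ for $x\in A$, $y\in B$, whence
\[
P(s)g\ \ge\ \varepsilon\Big(\int_B g\,dm\Big)\,1_A\qquad\text{for every }g\in D(m).
\]
Applying this to $g=P(t)f$ yields the basic inequality $P(t+s)f\ge\varepsilon\big(\int_B P(t)f\,dm\big)1_A$. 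Consequently a common lower function of the form $h=c\,1_A$ exists as soon as the mass $\int_B P(t)f\,dm$ stays bounded below by a fixed positive constant, independent of $f$, for all large $t$; in other words, as soon as the semigroup does not sweep mass off the reference set $B$.

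The heart of the argument, and the step I expect to be the main obstacle, is precisely this exclusion of sweeping and its upgrade to a genuine common lower bound. Both hypotheses on $f_*$ enter here. Strict positivity $f_*>0$ a.e. gives $\supp f_*=E$, so $\int_B f_*\,dm>0$; invariance then gives $\int_B P(t)f_*\,dm=\int_B f_*\,dm$ for all $t$, so the invariant density itself pins a fixed positive amount of mass on $B$ and never sweeps. To transfer this to an arbitrary density I would invoke the Foguel-type alternative for partially integral stochastic semigroups (the continuous-time analogue of the dichotomy in Theorem~\ref{t:inv}): such a semigroup is either asymptotically stable or sweeping with respect to the relevant family of sets. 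Uniqueness of the invariant density, together with the positive mass that $f_*$ places on $B$, rules out the sweeping alternative, since sweeping is incompatible with the existence of a positive invariant density. The delicate points are to set up this dichotomy cleanly in the abstract $\sigma$-finite setting and to extract from the failure of sweeping the uniform lower bound on $\int_B P(t)f\,dm$ needed to make $\|(P(t)f-h)^-\|\to0$; once that is in place, the lower-function criterion closes the proof.
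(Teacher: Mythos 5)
The paper does not prove Theorem~\ref{thm:pr00} at all: it is imported verbatim from \cite{pichorrudnicki00}, where the proof rests on a decomposition theory for partially integral stochastic semigroups admitting a strictly positive invariant density --- convergence of $P(t)f$ for every density is established first, with strong continuity in $t$ used to rule out asymptotically periodic (cyclic) behaviour, and uniqueness of $f_*$ then identifies the limit --- not on the lower-function criterion. Your sketch correctly reduces asymptotic stability to producing a common lower function (that criterion is indeed valid for stochastic semigroups, see \cite{almcmbk94}), but its core is circular: the ``Foguel-type alternative'' you invoke --- every partially integral stochastic semigroup is either asymptotically stable or sweeping --- is not an available prior tool; it is a statement at least as strong as the theorem being proved. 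Indeed, granting it, the theorem follows in one line: if the semigroup swept a set $B$ with $\int_B f_*\,dm>0$, then $\int_B f_*\,dm=\int_B P(t)f_*\,dm\to 0$, a contradiction, so the first alternative holds --- and your entire lower-function construction becomes superfluous, a sign that all the real work has been displaced into the invoked dichotomy. The discrete-time dichotomy the paper actually quotes (Theorem~\ref{t:inv}, from \cite{rudnicki95}) requires the strong pointwise minorization hypothesis (b) on the iterated kernel, far more than partial integrality, so it cannot be borrowed here; the continuous-time alternatives in the literature are themselves proved with machinery of the same depth as Theorem~\ref{thm:pr00}.

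Two further steps fail as written. First, even with sweeping excluded, the negation of sweeping asserts only that \emph{some} density keeps mass on $B$ along \emph{some} sequence of times; it does not yield the uniform bound $\liminf_{t\to\infty}\int_B P(t)f\,dm\ge\delta>0$ for \emph{all} $f\in D(m)$ that your lower function $h=\varepsilon\delta\, 1_A$ requires, and neither uniqueness nor strict positivity of $f_*$ supplies this transfer directly --- bridging that gap is precisely the Harris-type work the theorem encapsulates. Second, extracting sets $A,B$ of positive measure with $p\ge\varepsilon$ on $A\times B$ from $\int_E\int_E p\,dm\,dm>0$ is invalid in general: a positive-measure subset of $E\times E$ need not contain a measurable rectangle of positive measure, so a rectangle minorization requires an additional argument (for instance, passing to a composition of the kernel with itself together with an overlap estimate), and the proof in \cite{pichorrudnicki00} avoids rectangles altogether. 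In sum, the proposal reduces the theorem to an unproved statement that already contains it, and the remaining steps have genuine gaps; it also does not reflect the route taken in the cited source.
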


Note that if the semigroup $\{P(t)\}_{t\ge 0}$ is asymptotically stable then, for each $s>0$, the operator
$P(s)$ is uniquely mean ergodic.
Thus, Proposition~\ref{p:exsub}  gives the following.

\begin{corollary}\label{p:exsubct} Suppose that  a stochastic semigroup $\{P(t)\}_{t\ge
0}$ is asymptotically stable with an invariant density $f_*$.  If $\tilde{f}_*$  is subinvariant for $\{P(t)\}_{t\ge 0}$ and $m(\supp f_*\cap\{x:\tilde{f}_*(x)<\infty\})>0$,  then $\tilde{f}_*\in L^1$.
\end{corollary}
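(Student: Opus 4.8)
The plan is to reduce the statement to the already-established Proposition~\ref{p:exsub}, applied to a single time-$s$ operator of the semigroup. The paragraph preceding the corollary indicates exactly this: asymptotic stability of $\{P(t)\}_{t\ge 0}$ should force each $P(s)$ to be uniquely mean ergodic, after which the hypotheses of the corollary are seen to be just the hypotheses of Proposition~\ref{p:exsub} for $P=P(s)$.

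First I would fix an arbitrary $s>0$ and verify that $P(s)$ is uniquely mean ergodic with the same invariant density $f_*$. By the semigroup property, $P(s)^n=P(ns)$, and asymptotic stability yields
\[
\lim_{n\to\infty}\|P(ns)f-f_*\|=0\qquad\text{for every }f\in D(m).
\]
Since the Ces\`aro averages of a norm-convergent sequence converge (in norm) to the same limit, this gives $\lim_{N\to\infty}\frac{1}{N}\sum_{n=0}^{N-1}P(s)^nf=f_*$ for $f\in D(m)$. Extending by positive homogeneity to all $f\in L^1_+$ (writing $f=\|f\|\,(f/\|f\|)$ when $\|f\|>0$, and noting the case $f=0$ is trivial) produces precisely condition~\eqref{d:ume} for $P(s)$, so $P(s)$ is uniquely mean ergodic with invariant density $f_*$.

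Next I would transfer the remaining hypotheses from the semigroup to the single operator $P(s)$. Because $f_*$ is invariant for $\{P(t)\}_{t\ge 0}$, it is invariant for $P(s)$; because $\tilde{f}_*$ is subinvariant for the semigroup, it is subinvariant for each $P(t)$, in particular for $P(s)$. The support condition $m(\supp f_*\cap\{x:\tilde{f}_*(x)<\infty\})>0$ is stated identically in both results and is inherited verbatim. Applying Proposition~\ref{p:exsub} to the stochastic operator $P=P(s)$ then yields $\tilde{f}_*\in L^1$, which is the conclusion.

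I do not anticipate a genuine obstacle here: the only nonroutine point is the passage from asymptotic stability to unique mean ergodicity of $P(s)$, and that rests on the elementary fact that Ces\`aro means of a convergent sequence share its limit. Everything else is a matter of matching the hypotheses of Proposition~\ref{p:exsub} against those of the corollary.
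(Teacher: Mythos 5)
Your proposal is correct and follows essentially the same route as the paper, which derives the corollary by noting that asymptotic stability makes each $P(s)$ uniquely mean ergodic and then invoking Proposition~\ref{p:exsub}. Your only addition is to spell out the Ces\`aro-averaging step (via $P(s)^n=P(ns)$) that the paper leaves implicit, and that step is sound.
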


\section{Existence of invariant densities for perturbed semigroups}\label{ssec:evol}

In this section we study the problem of existence of invariant densities for substochastic semigroups on $L^1$.
We first recall some notation and a generalization of Kato's perturbation theorem~\cite{kato54}.

Let $\{S(t)\}_{t\ge 0}$ be a substochastic semigroup on $ L^1$. The infinitesimal \emph{generator} of
$\{S(t)\}_{t\ge0}$ is by definition the operator $A$ with domain $\mathcal{D}(A)\subset L^1$ defined as
\[
\begin{split}
\mathcal{D}(A)&=\{f\in L^1: \lim_{t\downarrow 0}\frac{1}{t}(S(t)f-f) \text{ exists} \},\\
Af&=\lim_{t\downarrow 0}\frac{1}{t}(S(t)f-f),\quad f\in \mathcal{D}(A).
\end{split}
\]
The operator $A$ is closed with $\mathcal{D}(A)$ dense in $L^1$. If for some real $\lambda$ the operator
$\lambda-A:=\lambda I-A$ is one-to-one, onto, and $(\lambda -A)^{-1}$ is a bounded linear operator, then $\lambda$
is said to belong to the resolvent set $\rho(A)$ and $R(\lambda,A):=(\lambda  -A)^{-1}$ is called the resolvent
at $\lambda$ of $A$. If $A$ is the generator of the substochastic semigroup $\{S(t)\}_{t\ge0}$ then
$(0,\infty)\subset \rho(A)$ and we have the integral representation
\[
R(\lambda,A)f=\int_{0}^{\infty}e^{-\lambda s}S(s)f\,ds \quad \text{for}\quad f\in L^1.
\]
The operator $\lambda R(\lambda,A)$ is substochastic and $R(\mu,A)f\le R(\lambda,A)f$ for $\mu>\lambda>0$, $f\in
L_+^1$.

We assume throughout this section that $P$ is a stochastic operator on $ L^1$,  $\varphi\colon E\to[0,\infty)$ is
a measurable function, and  that $\{S(t)\}_{t\ge0}$ is a substochastic semigroup with generator
$(A,\mathcal{D}(A))$ such that
\begin{equation}\label{eq:defA}
\mathcal{D}(A)\subseteq  L^1_{\varphi}\quad \text{and}\quad \int_{E}Af(x)\, m(dx)=-\int_{E} \varphi(x) f(x)\, m(dx)
\end{equation}
for $f\in \mathcal{D}(A)_+=\mathcal{D}(A)\cap L^1_+$, where
\[
 L^1_{\varphi}=\{f\in  L^1:\int_E\varphi(x)|f(x)|m(dx)<\infty\}.
\]
Our starting point is the following generation
result~\cite{kato54,voigt87,arlotti91,banasiak01,banasiakarlotti06} for the operator
\begin{equation}\label{eq:defC}
\mathcal{\C}f=Af+P(\varphi f)\quad \text{for}\quad f\in\mathcal{D}(A).
\end{equation}

\begin{theorem}\label{thm:kato}
There exists  a substochastic semigroup $\{P(t)\}_{t\ge 0}$ on $ L^1$ such that the generator
$(\C,\mathcal{D}(\C))$ of $\{P(t)\}_{t\ge 0}$ is an \emph{extension} of the operator in~\eqref{eq:defC}, i.e.,
\begin{equation*}\label{ext}
\mathcal{D}(A)\subseteq\mathcal{D}(\C)\quad \text{and}\quad \C f=\mathcal{\C} f\quad \text{for}\quad
f\in\mathcal{D}(A),
\end{equation*}
the generator $\C$ of $\{P(t)\}_{t\ge 0}$ is characterized by
\begin{equation}\label{eq:rp} R(\lambda,\C)f=\lim_{n\to\infty}R(\lambda,A)\sum_{k=0}^n
(P(\varphi R(\lambda,A)))^k f, \quad f\in  L^1, \lambda>0,
\end{equation}
and the semigroup $\{P(t)\}_{t\ge 0}$ is \emph{minimal}, i.e., if $\{\bar{P}(t)\}_{t\ge 0}$ is another semigroup
with generator which is an extension of $(\mathcal{\C},\mathcal{D}(A))$ then $\bar{P}(t)f\ge P(t)f$ for all $f\in
L_+^1$.

Moreover, the following are equivalent:
\begin{enumerate}[\upshape (1)]
\item  $\{P(t)\}_{t\ge 0}$ is a stochastic semigroup.
\item The generator $\C$ is the closure of the operator $(\mathcal{\C},\mathcal{D}(A))$.
\item\label{i:pert} There is $f\in  L^1_+$, $f>0$ a.e. such that for some $\lambda>0$
\begin{equation}\label{eq:css1}
\lim_{n\to\infty}\|(P(\varphi R(\lambda,A)))^nf\|=0.
\end{equation}
\end{enumerate}
\end{theorem}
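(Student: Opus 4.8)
The plan is to build the resolvent of the would-be generator directly from the series in~\eqref{eq:rp}, show it defines a substochastic pseudoresolvent, invoke Hille--Yosida to manufacture the semigroup, and then extract the extension, minimality and the three equivalences from the algebraic structure of the series. Throughout I abbreviate $B_\lambda:=P(\varphi R(\lambda,A))$, which is a well-defined positive operator on $L^1$ precisely because $\mathcal{D}(A)\subseteq L^1_\varphi$: for any $g$, $R(\lambda,A)g\in\mathcal{D}(A)$ forces $\varphi R(\lambda,A)g\in L^1$, and $P$ is bounded. The engine of everything is the identity obtained by feeding $f=R(\lambda,A)g$ (with $g\in L^1_+$) into~\eqref{eq:defA}: since $AR(\lambda,A)g=\lambda R(\lambda,A)g-g$, integration gives
\[
\int_E \varphi R(\lambda,A)g\,m(dx)=\|g\|-\lambda\|R(\lambda,A)g\|,
\]
and applying the stochastic $P$ (which preserves integrals of nonnegative functions) yields $\|B_\lambda g\|=\|g\|-\lambda\|R(\lambda,A)g\|\le\|g\|$, so $B_\lambda$ is a positive contraction.

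\emph{Existence and extension.} I would set $R_\lambda=\sum_{k\ge0}R(\lambda,A)B_\lambda^k$ with partial sums $S_n$. For $g\in L^1_+$ all terms are nonnegative, so $S_ng$ increases, and the identity above telescopes:
\[
\lambda\|S_ng\|=\sum_{k=0}^{n}\left(\|B_\lambda^k g\|-\|B_\lambda^{k+1}g\|\right)=\|g\|-\|B_\lambda^{n+1}g\|\le\|g\|.
\]
Monotone convergence then makes $R_\lambda g\in L^1$ with $\lambda\|R_\lambda g\|\le\|g\|$, so $\lambda R_\lambda$ is substochastic; positivity extends the bound to all of $L^1$. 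Manipulating the series gives the resolvent identity together with the functional equation $R_\lambda=R(\lambda,A)+R_\lambda B_\lambda$, from which $\rg(R_\lambda)\supseteq\rg(R(\lambda,A))=\mathcal{D}(A)$ is dense and (by a standard pseudoresolvent argument using $\lambda R_\lambda\to I$ strongly) $\Ker(R_\lambda)=\{0\}$. Hille--Yosida, in the version for positive contraction resolvents, then produces a substochastic $C_0$-semigroup $\{P(t)\}$ with generator $\C$ and $R(\lambda,\C)=R_\lambda$, which is~\eqref{eq:rp}. To see $\C$ extends the operator in~\eqref{eq:defC}, take $f\in\mathcal{D}(A)$, put $h=(\lambda-A)f$ so that $f=R(\lambda,A)h$ and $P(\varphi f)=B_\lambda h$; the functional equation gives $R_\lambda[(\lambda-A)f-P(\varphi f)]=R_\lambda(I-B_\lambda)h=R(\lambda,A)h=f$, whence $f\in\mathcal{D}(\C)$ and $\C f=Af+P(\varphi f)$.

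\emph{Minimality.} If $\{\bar P(t)\}$ has generator $\bar\C$ extending $(\mathcal{G},\mathcal{D}(A))$, the same substitution $f=R(\lambda,A)g$ turns $\bar\C\supseteq\mathcal{G}$ into $\bar R_\lambda g=R(\lambda,A)g+\bar R_\lambda B_\lambda g$ for $g\in L^1_+$, with $\bar R_\lambda=R(\lambda,\bar\C)\ge0$. Iterating and using positivity of $\bar R_\lambda$ and $B_\lambda$ gives $\bar R_\lambda g\ge S_ng$ for every $n$, hence $\bar R_\lambda g\ge R_\lambda g$; the exponential formula $\bar P(t)f=\lim_n(\tfrac{n}{t}\bar R_{n/t})^n f$ compared with the same for $P(t)$ propagates this to $\bar P(t)\ge P(t)$ on $L^1_+$.

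\emph{Equivalences.} The defect $L(g):=\lim_n\|B_\lambda^n g\|$ (a decreasing limit for $g\ge0$) governs the last part: letting $n\to\infty$ in the telescoped identity gives $\lambda\|R_\lambda g\|=\|g\|-L(g)$, so $\lambda R_\lambda$, equivalently $\{P(t)\}$, is stochastic iff $L\equiv0$ on $L^1_+$. Condition~\eqref{eq:css1} is $L(f)=0$ for one $f>0$ a.e.; a domination argument ($g\wedge(kf)\uparrow g$ forces $L(g)\le L(kf)=0$) upgrades it to $L\equiv0$, giving $(3)\Leftrightarrow(1)$. For $(2)\Rightarrow(1)$ I would use $\int_E\mathcal{G}f\,m(dx)=0$ on $\mathcal{D}(A)$ (from~\eqref{eq:defA} and $P$ stochastic), pass to the closure, and deduce $\frac{d}{dt}\int_EP(t)f\,m(dx)=0$; for $(1)\Rightarrow(2)$, stochasticity gives $\|B_\lambda^n g\|\to0$, so $g-B_\lambda^ng=\sum_{k<n}(I-B_\lambda)B_\lambda^kg\to g$ shows $\rg(I-B_\lambda)=(\lambda-\mathcal{G})\mathcal{D}(A)$ is dense, i.e.\ $\mathcal{D}(A)$ is a core for $\C$. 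I expect the delicate point to be exactly this core/conservativity equivalence $(1)\Leftrightarrow(2)$ together with the passage from the pseudoresolvent $R_\lambda$ to a genuine generator: the density of $\rg(I-B_\lambda)$ has to be matched precisely against the vanishing of the defect $L$, and everything rests on $R_\lambda$ really being $R(\lambda,\C)$.
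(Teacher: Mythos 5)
The paper itself contains no proof of Theorem~\ref{thm:kato}: it is quoted as the known Kato--Voigt perturbation theorem, with the argument delegated to the cited sources \cite{kato54,voigt87,arlotti91,banasiak01,banasiakarlotti06}, and your proposal is a correct reconstruction of precisely the standard argument found there --- the conservation identity $\|P(\varphi R(\lambda,A))g\|=\|g\|-\lambda\|R(\lambda,A)g\|$ on the positive cone, the monotone resolvent series, the pseudoresolvent/Hille--Yosida passage to a generator, and the defect functional $L$ controlling conservativity and the core property. The few steps you gloss are all standard and fillable exactly as you anticipate: the resolvent identity for $R_\lambda$ (usually verified first for the norm-convergent series with $rB_\lambda$, $0<r<1$, then passed to the monotone limit $r\uparrow 1$), the strong convergence $\lambda R_\lambda\to I$ (which follows from your own identity, since $\|B_\lambda g\|=\|g\|-\lambda\|R(\lambda,A)g\|\to 0$ as $\lambda\to\infty$ for $g\ge 0$), the decomposition $\mathcal{D}(A)=\mathcal{D}(A)_+-\mathcal{D}(A)_+$ via $R(\lambda,A)g_\pm$ needed to extend $\int_E \mathcal{G}f\,dm=0$ from the positive cone in your $(2)\Rightarrow(1)$ step, and the positivity of $\bar P(t)$ in the minimality claim, which is implicit in the statement (the comparison semigroup is meant to be substochastic, as in the cited references).
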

\begin{remark}
Note that (see e.g.~\cite{tyran09b}) the generator of $\{P(t)\}_{t\ge 0}$ is the operator
$(\mathcal{\C},\mathcal{D}(A))$ if and only if for some $\lambda >0$
\[
\lim_{n\to\infty}\|(P(\varphi R(\lambda,A)))^n\|=0.
\]
In particular, if $\varphi$ is bounded then this condition holds.
\end{remark}

We also need the substochastic operator $K\colon  L^1\to  L^1$ defined by
\begin{equation}\label{eq:K}
Kf=\lim_{\lambda\downarrow 0} P(\varphi R(\lambda,A))f\quad \text{for } f\in L^1.
\end{equation}
It follows from \cite[Theorem~3.6]{tyran09} that $K$ is stochastic  if and only if the semigroup $\{S(t)\}_{t\ge
0}$ generated by $A$ is \emph{strongly stable}, i.e.,
\begin{equation}\label{eq:sst}
\lim_{t\to\infty}S(t)f=0\quad\text{for all } f\in  L^1.
\end{equation}
Moreover, if  $K$ is mean ergodic then the minimal semigroup $\{P(t)\}_{t\ge 0}$ from Theorem~\ref{thm:kato} is
stochastic.

We study relationships between invariant densities of the operator $K$ defined by \eqref{eq:K} and invariant densities of the minimal semigroup  $\{P(t)\}_{t\ge 0}$. Our first main result in this section is the following.

\begin{theorem}\label{thm:exunifp}
Suppose that the operator $K$ has a subinvariant density $f_*$ and let
\begin{equation}\label{eq:esd}
\overline{f}_*=\sup_{\lambda> 0} R(\lambda,A)f_*.
\end{equation}
Then $\overline{f}_*$ is subinvariant for the semigroup $\{P(t)\}_{t\ge0}$. In particular, if $\overline{f}_*\in L^1$ and the semigroup $\{P(t)\}_{t\ge 0}$ is stochastic, then it has an invariant density.
\end{theorem}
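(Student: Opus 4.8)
The plan is to verify the resolvent subinvariance inequality $\lambda R(\lambda,\C)\overline{f}_*\le\overline{f}_*$ for every $\lambda>0$ and then transfer it to the semigroup. First I would record that, by the monotonicity $R(\mu,A)f\le R(\lambda,A)f$ for $0<\lambda<\mu$ and $f\in L^1_+$, the supremum in \eqref{eq:esd} is an increasing limit, $\overline{f}_*=\lim_{\lambda\downarrow 0}R(\lambda,A)f_*$, so that $\overline{f}_*$ is a well-defined nonnegative measurable function to which the extensions of the positive operators $R(\lambda,A)$, $P$ and multiplication by $\varphi$ apply via monotone convergence. Two identities drive the argument. From the first resolvent identity $R(\lambda,A)R(\mu,A)=(R(\mu,A)-R(\lambda,A))/(\lambda-\mu)$, letting $\mu\downarrow 0$ gives
\[
\lambda R(\lambda,A)\overline{f}_*=\overline{f}_*-R(\lambda,A)f_*,
\]
while the very definition \eqref{eq:K} of $K$ together with monotone convergence gives $P(\varphi\overline{f}_*)=\lim_{\mu\downarrow 0}P(\varphi R(\mu,A)f_*)=Kf_*$, which is $\le f_*$ by subinvariance of $f_*$ for $K$.

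Combining the two identities yields the supersolution inequality
\[
\lambda R(\lambda,A)\overline{f}_*+R(\lambda,A)P(\varphi\overline{f}_*)=\overline{f}_*-R(\lambda,A)f_*+R(\lambda,A)Kf_*\le\overline{f}_*,\qquad(\star)
\]
since $Kf_*\le f_*$ and $R(\lambda,A)$ is positive. Next I would feed $(\star)$ into the Neumann series \eqref{eq:rp}. Writing $Q_\lambda=P(\varphi R(\lambda,A))$ and letting $v_n=\lambda R(\lambda,A)\sum_{k=0}^{n-1}Q_\lambda^k\overline{f}_*$ denote the partial sums, one checks that $v_0=0$, that $v_n$ increases to $\lambda R(\lambda,\C)\overline{f}_*$, and that $v_{n+1}=\lambda R(\lambda,A)\overline{f}_*+R(\lambda,A)P(\varphi v_n)$. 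A one-line induction then gives $v_n\le\overline{f}_*$ for every $n$: assuming $v_n\le\overline{f}_*$, monotonicity of $P$ and $R(\lambda,A)$ together with $(\star)$ yields $v_{n+1}\le\lambda R(\lambda,A)\overline{f}_*+R(\lambda,A)P(\varphi\overline{f}_*)\le\overline{f}_*$. Passing to the supremum gives $\lambda R(\lambda,\C)\overline{f}_*\le\overline{f}_*$ for all $\lambda>0$.

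Finally I would convert this resolvent inequality into semigroup subinvariance. Iterating it gives $(\lambda R(\lambda,\C))^{n}\overline{f}_*\le\overline{f}_*$, and the exponential (Yosida) formula $P(t)h=\lim_{n}\left(\frac{n}{t}R\left(\frac{n}{t},\C\right)\right)^{n}h$ — applied first to an approximating sequence $h_j\in L^1_+$ with $h_j\uparrow\overline{f}_*$ and then passed to the supremum — yields $P(t)\overline{f}_*\le\overline{f}_*$ for all $t\ge 0$, i.e.\ $\overline{f}_*$ is subinvariant for $\{P(t)\}_{t\ge 0}$. For the last assertion, $\overline{f}_*\ne 0$ because $\|R(\lambda,A)f_*\|=\int_0^\infty e^{-\lambda s}\|S(s)f_*\|\,ds>0$; hence, if $\overline{f}_*\in L^1$ and $\{P(t)\}_{t\ge 0}$ is stochastic, then $\overline{f}_*/\|\overline{f}_*\|$ is a subinvariant density and therefore an invariant density, as recorded after \eqref{d:top}.

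The hard part will be the bookkeeping for the extended operators acting on the possibly non-integrable $\overline{f}_*$: each step above (the limit in the first resolvent identity, the identity $P(\varphi\overline{f}_*)=Kf_*$, the $(\star)$-induction on the Neumann partial sums, and above all the passage from the resolvent inequality to $P(t)\overline{f}_*\le\overline{f}_*$) must be justified by monotone convergence rather than by boundedness in $L^1$. In particular, the exponential-formula step requires extracting an a.e.-convergent subsequence from the $L^1$ limit so that the order inequality $\le\overline{f}_*$ survives in the limit, and one must keep track that all the intermediate quantities are meant as elements of the extended cone of nonnegative measurable functions with values in $[0,\infty]$.
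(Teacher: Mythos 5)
Your argument is correct, but it takes a genuinely different route than the paper's proof. You push everything onto the extended cone of nonnegative measurable functions: you derive the identity $\lambda R(\lambda,A)\overline{f}_*=\overline{f}_*-R(\lambda,A)f_*$ from the resolvent identity, observe $P(\varphi\overline{f}_*)=Kf_*\le f_*$ (an identity the paper only obtains later, in the partial converse, Theorem~\ref{thm:exunifpi}), run an induction along the Neumann partial sums of \eqref{eq:rp} to get $\lambda R(\lambda,\C)\overline{f}_*\le\overline{f}_*$, and finally transfer this to $P(t)\overline{f}_*\le\overline{f}_*$ via the post--Widder exponential formula with an a.e.-convergent subsequence. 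The paper instead never leaves $L^1$: it sets $f_\lambda=R(\lambda,A)f_*\in\mathcal{D}(A)\subseteq\mathcal{D}(\C)$, notes $P(\varphi R(\lambda,A))f_*\le Kf_*\le f_*$, and reads off from $\C R(\lambda,A)f=\lambda R(\lambda,A)f+P(\varphi R(\lambda,A))f-f$ that $\C f_\lambda\le\lambda f_\lambda$; since then $(\mu-\C)f_\lambda\ge 0$ for $\mu\ge\lambda>0$, the elementary identity $f-e^{-\mu t}P(t)f=\int_0^t e^{-\mu s}P(s)(\mu-\C)f\,ds$ for the rescaled semigroup gives $P(t)f_\lambda\le e^{\mu t}f_\lambda\le e^{\mu t}\overline{f}_*$, and the only extended-cone step is the final pointwise passage $\lambda\downarrow 0$, $\mu\downarrow 0$. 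The paper's route buys exactly what you flag as the hard part of yours: all operator identities are applied to honest elements of $\mathcal{D}(\C)$, so no monotone extension of $R(\lambda,\C)$, no additivity and sup-exchange lemmas for extended operators, and no exponential formula with subsequence extraction are needed, which makes the argument a half page. Your route buys the explicit intermediate facts $P(\varphi\overline{f}_*)=Kf_*$ and the resolvent supersolution inequality for $\C$ itself, which are of independent interest; every step you list does go through by monotone convergence (in particular the induction $v_{n+1}=\lambda R(\lambda,A)\overline{f}_*+R(\lambda,A)P(\varphi v_n)\le\overline{f}_*$ is sound, and $\|R(\lambda,A)f_*\|=\int_0^\infty e^{-\lambda s}\|S(s)f_*\|\,ds>0$ correctly gives nontriviality), but a complete write-up would be noticeably longer than the paper's.
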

\begin{proof} Let $f_\lambda=R(\lambda,A)f_*$  for $\lambda>0$. Since $R(\lambda,A)$ is the resolvent of a
substochastic semigroup, we have $f_\lambda\ge 0$,  $f_\lambda\uparrow \overline{f}_*$, and $\overline{f}_*$ is nontrivial. From
\eqref{eq:K} it follows that $P(\varphi R(\lambda,A))f_*\le K f_*\le f_*$. We have $\mathcal{D}(A)\subseteq
\mathcal{D}(\C)$ and $\C f=Af+P(\varphi f)$ for $f\in\mathcal{D}(A)$. Hence
\begin{equation*}
\C R(\lambda,A)f=\lambda R(\lambda,A)f+P(\varphi R(\lambda,A))f-f
\end{equation*}
for every $f\in L^1$, which implies that $\C f_\lambda \le \lambda f_\lambda$ for all $\lambda>0$. The semigroup
$e^{-\mu t}P(t)$ has the generator $(\C-\mu ,\mathcal{D}(\C))$, thus
\[
f-e^{-\mu t}P(t)f=\int_0^t e^{-\mu s}P(s)(\mu-\C)f ds
\]
for all $t,\mu>0$ and  $f\in \mathcal{D}(\C)$. Since  $(\mu-\C)f_\lambda\ge (\mu-\lambda) f_\lambda\ge 0$  for
every $\mu\ge \lambda>0$, we conclude that
\[
f_\lambda-e^{-\mu t}P(t)f_\lambda\ge 0
 \]
for all $\mu\ge\lambda>0$ and $t>0$. Consequently,
 \[
P(t)f_\lambda\le e^{\mu t} f_\lambda\le e^{\mu t} \overline{f}_*,
 \]
and taking pointwise limits of both sides when $\lambda\downarrow 0$ and then $\mu\downarrow 0$ shows that $\overline{f}_*$
is subinvariant for $P(t)$. Finally, if $P(t)$ is stochastic and $\overline{f}_*\in L^1$ then $\|\overline{f}_*\|>0$ and $\overline{f}_*/\|\overline{f}_*\|$ is
an invariant density for $P(t)$.
\end{proof}

We now give a useful observation.

\begin{corollary}\label{c:pos}
If the operator $K$ has a subinvariant density $f_*$ and $f_*>0$ a.e., then the semigroup $\{P(t)\}_{t\ge 0}$ is
stochastic and $\overline{f}_*$ as defined in \eqref{eq:esd} satisfies $\overline{f}_*>0$ a.e.
\end{corollary}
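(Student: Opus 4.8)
The plan is to prove the two assertions separately, the first by chaining results already recorded in this section and the second by a direct potential-theoretic argument.

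For the stochasticity of $\{P(t)\}_{t\ge 0}$ I would argue as follows. The operator $K$ is substochastic and, by hypothesis, admits the subinvariant density $f_*$ with $f_*>0$ a.e. The criterion recalled above (\cite[Lemma 1.1 and Theorem 1.1]{kornfeldlin00}) therefore applies and shows that $K$ is mean ergodic. Invoking the fact stated after \eqref{eq:sst} --- that mean ergodicity of $K$ forces the minimal semigroup of Theorem~\ref{thm:kato} to be stochastic --- gives the first conclusion with no further computation.

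For $\overline{f}_*>0$ a.e. I would first reinterpret $\overline{f}_*$ as the potential of $f_*$ under $\{S(t)\}_{t\ge 0}$. Starting from $R(\lambda,A)f_*=\int_0^\infty e^{-\lambda s}S(s)f_*\,ds$ and using $e^{-\lambda s}\uparrow 1$ as $\lambda\downarrow 0$, monotone convergence yields
\[
\overline{f}_*=\sup_{\lambda>0}R(\lambda,A)f_*=\lim_{\lambda\downarrow 0}R(\lambda,A)f_*=\int_0^\infty S(s)f_*\,ds
\]
pointwise a.e., with values in $[0,\infty]$. Because $s\mapsto S(s)f_*(x)$ is nonnegative, the set $N=\{x:\overline{f}_*(x)=0\}$ is, up to a null set, precisely where this map vanishes for a.e. $s$; in particular $\int_0^\epsilon S(s)f_*(x)\,ds=0$ for every $\epsilon>0$ and a.e. $x\in N$.

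The last step is a strong-continuity argument at $s=0$. Setting $g_\epsilon=\frac{1}{\epsilon}\int_0^\epsilon S(s)f_*\,ds$, the $C_0$-property of $\{S(t)\}_{t\ge 0}$ gives $g_\epsilon\to f_*$ in $L^1$ as $\epsilon\downarrow 0$, whereas the previous step forces $g_\epsilon=0$ a.e. on $N$. Hence
\[
\int_N f_*\,dm=\int_N|f_*-g_\epsilon|\,dm\le\|f_*-g_\epsilon\|\to 0,
\]
so $f_*=0$ a.e. on $N$, and since $f_*>0$ a.e. this forces $m(N)=0$, i.e.\ $\overline{f}_*>0$ a.e. I expect the stochasticity claim to be essentially a citation, so the real work is this positivity statement; the points needing care are the justification of $\overline{f}_*=\int_0^\infty S(s)f_*\,ds$ (including joint measurability of $(s,x)\mapsto S(s)f_*(x)$ and the possibility that the potential is infinite) and the passage from the vanishing of the potential to the vanishing of the averages $g_\epsilon$ on $N$.
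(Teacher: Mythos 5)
Your proof is correct, and while the first half coincides with the paper's argument, the positivity half takes a genuinely different route. For stochasticity you argue exactly as the paper does: $Kf_*\le f_*$ with $f_*>0$ a.e.\ gives mean ergodicity of $K$ by \cite[Lemma 1.1 and Theorem 1.1]{kornfeldlin00}, and the fact recorded after \eqref{eq:sst} then yields that the minimal semigroup is stochastic. For $\overline{f}_*>0$ a.e., the paper is much terser: it observes $\overline{f}_*\ge R(\lambda,A)f_*$ and invokes the abstract fact that $R(\lambda,A)$ is a positive bounded operator with dense range (its range being $\mathcal{D}(A)$), hence maps the a.e.\ strictly positive $f_*$ to an a.e.\ strictly positive function. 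You instead unwind the resolvent in the time domain, identify $\overline{f}_*$ with the potential $\int_0^\infty S(s)f_*\,ds$ by monotone convergence, and derive the contradiction on $N=\{\overline{f}_*=0\}$ from strong continuity at $s=0$ via the Ces\`aro averages $g_\epsilon$. Both arguments are sound, and the trade-off is real: the paper's one-liner hides a small lemma (from $Rf_*=0$ a.e.\ on $N$ one deduces, using $g\wedge nf_*\uparrow g$ in $L^1$ and positivity, that every element of the closure of $\rg(R(\lambda,A))$ vanishes a.e.\ on $N$, contradicting density), whereas your argument is self-contained from the $C_0$-property at the cost of the technicalities you correctly flag — joint measurability of $(s,x)\mapsto S(s)f_*(x)$ and the Fubini identification of the Bochner integral with the pointwise one, both standard since $s\mapsto S(s)f_*$ is norm continuous. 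One simplification worth noting: you do not need the full potential; since $\overline{f}_*\ge R(1,A)f_*$, it suffices to run your continuity argument on the single resolvent $R(1,A)f_*$, whose vanishing at $x$ already forces $S(s)f_*(x)=0$ for a.e.\ $s$.
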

\begin{proof}
Since $Kf_*\le f_*$ and $f_*>0$ a.e, the operator $K$ is mean ergodic. Thus $\{P(t)\}_{t\ge 0}$ is stochastic. We have $\overline{f}_*\ge R(\lambda,A)f_*$ for
$\lambda>0$. Since  $R(\lambda,A)$ is a positive bounded operator with dense range, we get
$R(\lambda,A)f_*>0$~a.e.
\end{proof}

\begin{remark}\label{r:pos}
Note that if $\{P(t)\}_{t\ge 0}$ has an invariant density $\tilde{f}$ with  $\tilde{f}>0$ a.e. then $\{P(t)\}_{t\ge 0}$ is
stochastic. To see this we check that condition \eqref{i:pert} of Theorem~\ref{thm:kato} holds.  By \cite[Remark 3.3]{tyran09}, we obtain that
\[
\begin{split}
\|R(1,\C)f\|=\lim_{n\to \infty}\|R(1, A)\sum_{k=0}^n (P(\varphi R(1,A)))^kf\|=\lim_{n\to\infty}(\|f\|-\|(P(\varphi R(1, A)))^{n+1}f\|)
\end{split}
\]
 for any $f\in L_+^1$. On the other hand, we have $R(1,\C)\tilde{f}=\tilde{f}$,  which shows that there is $\tilde{f}\in L^1_+$,
$\tilde{f}>0$ a.e., satisfying~\eqref{eq:css1}.
\end{remark}

\begin{remark}
The assumption in Theorem~\ref{thm:exunifp} that the  subinvariant function $f_*$ is integrable is essential, as
the following example shows \cite[Example 4.3]{kato54}. Let $E$ be the set of integers and let $m$ be the
counting measure on $E=\mathbb{Z}$ so that $ L^1=l^1(\mathbb{Z})$. Consider $Af=-\varphi f$ where $\varphi$ is a
positive function such that \[ \sum_{k\in\mathbb{Z}}\frac{1}{\varphi(k)}<\infty.
\] The semigroup generated by
$Af=-\varphi f$, $f\in  L^1_{\varphi}$, being of the form
\[
S(t)f(x)=e^{-t\varphi(x)}f(x),
\]
has the resolvent operator $R(\lambda,A)f=f/(\lambda+\varphi)$, $\lambda>0$. Let $P$ be the Frobenius-Perron
operator corresponding to $T(x)=x+1$ so that $Pf(x)=f(x-1)$. We have $K=P$ and $Kf_*=f_*$ for  $f_*\equiv 1$.
Thus $\overline{f}_*=\sup_{\lambda>0}R(\lambda,A)f_*=1/\varphi$ and $\overline{f}_*\in l^1(\mathbb{Z})$. Since the operator
\[
\mathcal{\C}f(x)=-\varphi(x)f(x)+\varphi(x-1)f(x-1),
\]
with the maximal domain $\mathcal{D}_{\mathrm{max}}=\{f\in l^1(\mathbb{Z}): \mathcal{\C}f\in l^1(\mathbb{Z})\}$
is an extension of the generator $\C$ of the semigroup $\{P(t)\}_{t\ge 0}$ (see e.g. \cite[Theorem 1.1]{kato54}),
we have $\overline{f}_*\in \mathcal{D}_{\mathrm{max}}$ and $\mathcal{\C}\overline{f}_*=0$. It follows from \cite[Example 4.3]{kato54}
that $\{P(t)\}_{t\ge 0}$ is not stochastic. Thus $\overline{f}_*\not\in \mathcal{D}(\C)$, because otherwise $\overline{f}_*$ is a
strictly positive invariant density for the semigroup $\{P(t)\}_{t\ge 0}$, implying that $\{P(t)\}_{t\ge 0}$ is
stochastic, by Remark~\ref{r:pos}.
\end{remark}

We next also discuss the problem of integrability of $\overline{f}_*$ given by~\eqref{eq:esd}.
\begin{corollary}\label{c:boundedphi} Let $\overline{f}_*$ be defined
as in~\eqref{eq:esd}. If  $0\in\rho(A)$ then $\overline{f}_*\in L^1$. In particular, if the function $\varphi$ is bounded
away from $0$ then $\overline{f}_*\in L^1$.
\end{corollary}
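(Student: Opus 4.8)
The plan is to combine the monotone–limit description of $\overline{f}_*$ with norm–continuity of the resolvent at $\lambda=0$. Writing $f_\lambda=R(\lambda,A)f_*$, the ordering $R(\mu,A)f\le R(\lambda,A)f$ on $L^1_+$ for $\mu>\lambda>0$ shows that $f_\lambda$ increases pointwise a.e.\ to $\overline{f}_*$ as $\lambda\downarrow 0$. Since each $f_\lambda\ge 0$, the monotone convergence theorem gives
\[
\|\overline{f}_*\|=\int_E \overline{f}_*\,dm=\lim_{\lambda\downarrow 0}\int_E f_\lambda\,dm=\lim_{\lambda\downarrow 0}\|f_\lambda\|,
\]
so it suffices to bound $\|R(\lambda,A)f_*\|$ uniformly for small $\lambda>0$.

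First I would dispatch the hypothesis $0\in\rho(A)$. As $\rho(A)$ is open and $\lambda\mapsto R(\lambda,A)$ is analytic, hence continuous, on $\rho(A)$, and since $(0,\infty)\subset\rho(A)$ together with $0\in\rho(A)$, we obtain $R(\lambda,A)\to R(0,A)$ in operator norm as $\lambda\downarrow 0$. In particular $f_\lambda\to R(0,A)f_*$ in $L^1$, so $\lim_{\lambda\downarrow 0}\|f_\lambda\|=\|R(0,A)f_*\|<\infty$. Feeding this into the display above yields $\|\overline{f}_*\|<\infty$, that is $\overline{f}_*\in L^1$ (and in fact $\overline{f}_*=R(0,A)f_*$ a.e.).

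For the second assertion it remains to show that $\varphi\ge c>0$ forces $0\in\rho(A)$, and the key is to extract exponential decay of $\{S(t)\}$ from the mass–balance identity \eqref{eq:defA}. For $f\in\mathcal{D}(A)_+$ one has $S(t)f\in\mathcal{D}(A)_+$ and $\tfrac{d}{dt}S(t)f=AS(t)f$; since $\|S(t)f\|=\int_E S(t)f\,dm$ and integration against $m$ is a bounded functional on $L^1$, \eqref{eq:defA} gives
\[
\frac{d}{dt}\|S(t)f\|=\int_E AS(t)f\,dm=-\int_E \varphi\,S(t)f\,dm\le -c\|S(t)f\|,
\]
whence $\|S(t)f\|\le e^{-ct}\|f\|$. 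By density of $\mathcal{D}(A)$ and positivity ($|S(t)g|\le S(t)|g|$) this extends to $\|S(t)\|\le e^{-ct}$ for all $t\ge 0$, so the growth bound of $\{S(t)\}$ is at most $-c<0$. The standard Laplace representation $R(\lambda,A)=\int_0^\infty e^{-\lambda s}S(s)\,ds$ then remains norm–convergent at $\lambda=0$, exhibiting $R(0,A)=\int_0^\infty S(s)\,ds$ as a bounded operator; hence $0\in\rho(A)$ and the first part applies.

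The main obstacle I anticipate lies in the second part: justifying the differential inequality for $\|S(t)f\|$ with full rigor—differentiating under the integral and invoking \eqref{eq:defA} requires the a priori membership $S(t)f\in\mathcal{D}(A)_+$ and the boundedness of the mass functional—and then passing cleanly from the exponential estimate on $\mathcal{D}(A)_+$ to the conclusion $0\in\rho(A)$ via the growth bound. By contrast, the first part is essentially immediate once the monotone–limit description of $\overline{f}_*$ is married to norm–continuity of the resolvent.
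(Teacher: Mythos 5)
Your proof is correct, and its overall strategy coincides with the paper's: handle the case $0\in\rho(A)$ first, then reduce the case $\varphi\ge c>0$ to it by showing $\|S(t)\|\le e^{-ct}$, so that $0\in\rho(A)$ holds. The differences are in how the two steps are justified. For the first step the paper simply uses the identity $R(0,A)=\sup_{\lambda>0}R(\lambda,A)$ for positive resolvents, while you combine monotone convergence with norm-continuity (analyticity) of $\lambda\mapsto R(\lambda,A)$ at $0$; both are fine, the paper's being a one-liner and yours making the limit mechanism explicit. For the second step the paper cites a generation result (\cite[Lemma 4.3]{tyran09b}): the bound $\int_E Af\,dm\le-\underline{\varphi}\|f\|$ on $\mathcal{D}(A)_+$ makes $(A+\underline{\varphi},\mathcal{D}(A))$ the generator of a substochastic semigroup $T(t)=e^{\underline{\varphi}t}S(t)$, whence $\|S(t)f\|\le e^{-\underline{\varphi}t}\|f\|$; you instead derive the same decay directly via the differential inequality $\frac{d}{dt}\|S(t)f\|=-\int_E\varphi\,S(t)f\,dm\le-c\|S(t)f\|$ on $\mathcal{D}(A)_+$ and Gronwall. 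Your route is more elementary and self-contained (no external generation lemma), at the cost of the regularity bookkeeping you already flag; that bookkeeping is unproblematic, since $S(t)\mathcal{D}(A)_+\subseteq\mathcal{D}(A)_+$ and $t\mapsto S(t)f$ is $C^1$ for $f\in\mathcal{D}(A)$. One small refinement: in the extension step, $|g|$ need not lie in $\mathcal{D}(A)$ for $g\in\mathcal{D}(A)$, so rather than invoking $|S(t)g|\le S(t)|g|$ on $\mathcal{D}(A)$ directly, first note that $\mathcal{D}(A)_+$ is dense in $L^1_+$ (approximate $h\ge0$ by $\lambda R(\lambda,A)h\in\mathcal{D}(A)_+$ as $\lambda\to\infty$), extend the bound to $L^1_+$, and then split $g=g_+-g_-$ to get $\|S(t)g\|\le e^{-ct}\|g\|$; the final passage from the negative growth bound to $0\in\rho(A)$ via the Laplace integral is standard, as you say.
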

\begin{proof} If  $0\in\rho(A)$, then $R(0,A)=-A^{-1}$ is a bounded operator and $R(0,A)=\sup_{\lambda
>0}R(\lambda,A)$, which implies that $\overline{f}_*\in L^1$.
Suppose now that there is a positive constant $\underline{\varphi}$ such that $\varphi\ge \underline{\varphi}$.
It follows from \eqref{eq:defA} that
\[
\int_E Af(x)m(dx)\le -\underline{\varphi}\|f\|
\]
for all $ f\in \mathcal{D}(A)_+$. Thus the operator $(A+\underline{\varphi},\mathcal{D}(A))$ is the generator of
a substochastic semigroup $\{T(t)\}_{t\ge 0}$ (see e.g. \cite[Lemma 4.3]{tyran09b}). On the other hand
$T(t)=e^{\underline{\varphi} t} S(t)$ for every $t>0$, which shows that $\|S(t)f\|\le e^{-\underline{\varphi}
t}\|f\|$ for all $f\in L^1$ and $t>0$. Hence, $0\in \rho (A)$.
\end{proof}

The generator $A$ might not have a bounded inverse operator, but if the semigroup $\{S(t)\}_{t\ge 0}$ is
strongly stable, then $A$ has always a densely defined inverse operator. We next recall its definition and
properties. Let  the operator $\R\colon \mathcal{D}(\R)\to  L^1$ be defined by
\begin{equation}\label{eq:R0}
\begin{split}
\R f& = \int_{0}^\infty S(s)f\,ds:=\lim_{t\to\infty}\int_{0}^t S(s)f\,ds,\\
\mathcal{D}(\R)&=\{f\in  L^1\colon \int_{0}^\infty S(s)f\,ds \text{ exists}\}.
\end{split}
\end{equation}
The mean ergodic theorem for semigroups \cite[Chapter VIII.4]{yosida78}(see also \cite[Theorem 12]{davies82})
together with additivity of the norm in $L^1$ and the characterization~\cite[Theorem 3.1]{krengellin84} of the
range of the generator of a substochastic semigroup gives the following.
\begin{proposition}\label{l:dyn}
Let $(\R,\mathcal{D}(\R))$ be defined by \eqref{eq:R0}. Then $\rg(\R)\subseteq \mathcal{D}(A)$,  $A\R f=-f$ for
$f\in\mathcal{D}(\R)$, and
\[
\mathcal{D}(\R)\subseteq \rg(A)=\{f\in  L^1: \sup_{t\ge 0}\bigl\|\int_{0}^t S(s)f\,ds\bigr\|<\infty\},
\]
where $\rg(A)=\{Af: f\in \mathcal{D}(A)\}$ is the range of the operator $A$.

Moreover, if the semigroup $\{S(t)\}_{t\ge 0}$ is strongly stable then $\mathcal{D}(\R)$ is dense,
$\rg(A)\subseteq\mathcal{D}(\R)$, $\R Af=-f$ for $f\in \mathcal{D}(A)$, and
\[
\R f=\lim_{\lambda\downarrow 0}R(\lambda,A)f, \quad f\in \mathcal{D}(\R).
\]
\end{proposition}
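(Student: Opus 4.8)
The plan is to build everything on two elementary identities valid for any $C_0$-semigroup. Writing $g_t:=\int_0^t S(s)f\,ds$, one has $g_t\in\mathcal{D}(A)$ with $Ag_t=S(t)f-f$ for every $f\in L^1$, and dually $\int_0^t S(s)Af\,ds=S(t)f-f$ for $f\in\mathcal{D}(A)$. The crux, and the step I expect to be the main obstacle, is the lemma that membership in $\mathcal{D}(\R)$ already forces $S(t)f\to 0$. To prove it I would note that $g_{t+h}-g_t=S(t)g_h$; since $g_t\to\R f$ in $L^1$ the left-hand side tends to $0$, so $S(t)g_h\to 0$ for every fixed $h>0$. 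Combining this with the strong-continuity fact $h^{-1}g_h\to f$ as $h\downarrow 0$ and the contractivity $\|S(t)\|\le 1$,
\[
\|S(t)f\|\le \|f-h^{-1}g_h\|+h^{-1}\|S(t)g_h\|,
\]
and letting first $t\to\infty$ and then $h\downarrow 0$ gives $\|S(t)f\|\to 0$. (For $f\ge 0$ this is even more transparent: additivity of the norm on $L^1_+$ gives $\|g_t\|=\int_0^t\|S(s)f\|\,ds$, which is bounded in $t$, while $\|S(s)f\|$ is nonincreasing, so $\|S(s)f\|\to 0$.)

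With this lemma the first group of assertions is immediate. For $f\in\mathcal{D}(\R)$ we have $g_t\to\R f$ and $Ag_t=S(t)f-f\to -f$; since $A$ is closed this yields $\R f\in\mathcal{D}(A)$ and $A\R f=-f$, hence $\rg(\R)\subseteq\mathcal{D}(A)$. Consequently $-f=A\R f\in\rg(A)$ for every $f\in\mathcal{D}(\R)$, so $\mathcal{D}(\R)\subseteq\rg(A)$; the identification $\rg(A)=\{f:\sup_{t\ge 0}\|g_t\|<\infty\}$ is the characterization of \cite[Theorem 3.1]{krengellin84}, and $\mathcal{D}(\R)$ sits inside its right-hand side since $g_t$ converges and is therefore bounded.

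Under strong stability I would first dispatch the range and inversion statements together: for $f\in\mathcal{D}(A)$ the dual identity gives $\int_0^t S(s)Af\,ds=S(t)f-f\to -f$, so $Af\in\mathcal{D}(\R)$ and $\R Af=-f$; this proves $\rg(A)\subseteq\mathcal{D}(\R)$, which with the previous paragraph gives $\mathcal{D}(\R)=\rg(A)$. For density it then suffices to show $\rg(A)$ is dense, i.e. $\Ker A^{*}=\{0\}$ in $L^{\infty}$: if $A^{*}\phi=0$ then $\langle\phi,S(t)f\rangle=\langle\phi,f\rangle$ for all $f\in L^1$ and $t\ge 0$, and strong stability forces $\langle\phi,f\rangle=\lim_{t\to\infty}\langle\phi,S(t)f\rangle=0$, so $\phi=0$ (this is the point where the mean ergodic theorem \cite[Chapter VIII.4]{yosida78} enters). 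Finally, for the resolvent limit I would use an Abelian argument: integrating by parts yields $R(\lambda,A)f=\lambda\int_0^\infty e^{-\lambda t}g_t\,dt$, and since $g_t\to\R f$ the Abel means converge to the same limit, giving $R(\lambda,A)f\to\R f$ as $\lambda\downarrow 0$. The only genuinely delicate points are the lemma $S(t)f\to 0$ for $f\in\mathcal{D}(\R)$ and the use of strong stability to annihilate $\Ker A^{*}$; everything else is bookkeeping with the two fundamental identities and the cited structural results.
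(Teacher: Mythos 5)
Your proof is correct, and it is genuinely more self-contained than the paper's: the paper gives no written argument for Proposition~\ref{l:dyn} at all, stating it as a direct assembly of three cited facts --- the mean ergodic theorem for semigroups (Yosida, Ch.~VIII.4; Davies, Thm.~12), additivity of the norm in $L^1$, and the Krengel--Lin characterization of the range of the generator. You invoke Krengel--Lin only where it is unavoidable (the identification $\rg(A)=\{f\in L^1:\sup_{t\ge 0}\|\int_0^t S(s)f\,ds\|<\infty\}$ is part of the statement), and you replace the ergodic-theoretic input by elementary semigroup arguments. Your key lemma --- that membership in $\mathcal{D}(\R)$ alone forces $S(t)f\to 0$, proved via $g_{t+h}-g_t=S(t)g_h$, contractivity, and $h^{-1}g_h\to f$ --- is exactly the crux needed to deduce $A\R f=-f$ from closedness of $A$ \emph{without} assuming strong stability, and your parenthetical norm-additivity argument for $f\ge 0$ is precisely the paper's ``additivity of the norm in $L^1$'' ingredient. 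Two remarks on the second half: your density argument via $\Ker A^*=\{0\}$ is a direct duality computation and, contrary to your aside, does not use the mean ergodic theorem --- it replaces it; and there is an even shorter route, since strong stability gives $-f=\lim_{h\to\infty}(S(h)f-f)=\lim_{h\to\infty}Ag_h\in\overline{\rg(A)}$ for every $f\in L^1$. Your Abelian argument for $R(\lambda,A)f\to\R f$ is also sound and in fact yields that limit for every $f\in\mathcal{D}(\R)$, stability or not, which is slightly stronger than stated. What your route buys is a verifiable proof from first principles; what the paper's citation-based route buys is brevity and placement of the proposition as a special case of general structural results on positive contraction semigroups on $L^1$.
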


We can now prove the following simple fact.

\begin{corollary}\label{c:exunifp}
Let  $(\R,\mathcal{D}(\R))$  be defined by \eqref{eq:R0}. Suppose that  $K$  is stochastic. Then $K$ is the
unique bounded extension of the densely defined operator $(P(\varphi \R),\mathcal{D}(\R))$. Moreover, if $f_*$
is an invariant density for $K$ then  $\overline{f}_*=\sup_{\lambda>0}R(\lambda,A)f_*\in L^1$ if and only if  $f_*\in
\mathcal{D}(\R)$, in which case  $\overline{f}_*=\R f_*$ and $\overline{f}_*\in\mathcal{D}(A)$.
\end{corollary}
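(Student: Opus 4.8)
The plan is to reduce everything to the strongly stable case and then run two monotone convergence arguments. By the characterization recalled after \eqref{eq:K}, the hypothesis that $K$ is stochastic is equivalent to strong stability of $\{S(t)\}_{t\ge0}$, so the ``moreover'' part of Proposition~\ref{l:dyn} is available: $\mathcal{D}(\R)$ is dense, $\rg(A)\subseteq\mathcal{D}(\R)$ (hence $\mathcal{D}(\R)=\rg(A)$), and $\R f=\lim_{\lambda\downarrow0}R(\lambda,A)f$ for $f\in\mathcal{D}(\R)$. The two recurring tools will be the monotonicity $R(\mu,A)f\le R(\lambda,A)f$ for $0<\lambda<\mu$, $f\in L^1_+$, and the monotone convergence theorem, used to pass between pointwise suprema and $L^1$ limits.

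For the first assertion I would show that $K$ and $P(\varphi\R)$ agree on $\mathcal{D}(\R)$. Fix $f\in\mathcal{D}(\R)_+$. By monotonicity $R(\lambda,A)f$ increases pointwise to $\sup_{\lambda>0}R(\lambda,A)f$ as $\lambda\downarrow0$, and since the $L^1$-limit $\R f=\lim_{\lambda\downarrow0}R(\lambda,A)f$ exists it must coincide a.e.\ with this pointwise supremum; in particular $\varphi R(\lambda,A)f\uparrow\varphi\R f$ pointwise. Applying $P$ (extended to nonnegative measurable functions through monotone limits) gives $P(\varphi R(\lambda,A)f)\uparrow P(\varphi\R f)$, while by definition \eqref{eq:K} the same increasing limit equals $Kf$; hence $Kf=P(\varphi\R f)$, and linearity extends this to all of $\mathcal{D}(\R)$. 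Since $\mathcal{D}(\R)$ is dense and $K$ is a contraction, $K$ is a bounded extension of $(P(\varphi\R),\mathcal{D}(\R))$, and any two bounded operators agreeing on the dense set $\mathcal{D}(\R)$ coincide, which yields uniqueness.

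For the second assertion, let $f_*$ be invariant for $K$. The key identity is $\overline{f}_*=\int_0^\infty S(s)f_*\,ds$ pointwise a.e.: inserting $R(\lambda,A)f_*=\int_0^\infty e^{-\lambda s}S(s)f_*\,ds$ and letting $\lambda\downarrow0$, monotone convergence in $s$ (with $e^{-\lambda s}\uparrow1$) gives $R(\lambda,A)f_*\uparrow\int_0^\infty S(s)f_*\,ds$, so the latter equals $\sup_{\lambda>0}R(\lambda,A)f_*=\overline{f}_*$. Since $s\mapsto S(s)f_*$ is nonnegative, $t\mapsto\int_0^tS(s)f_*\,ds$ increases pointwise to $\overline{f}_*$, and monotone convergence gives $\sup_t\bigl\|\int_0^tS(s)f_*\,ds\bigr\|=\|\overline{f}_*\|$, both sides finite or infinite together. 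Thus $\overline{f}_*\in L^1$ holds exactly when $\sup_t\bigl\|\int_0^tS(s)f_*\,ds\bigr\|<\infty$, which by the description of $\rg(A)$ in Proposition~\ref{l:dyn} together with $\mathcal{D}(\R)=\rg(A)$ is precisely $f_*\in\mathcal{D}(\R)$. When this holds, monotone convergence upgrades $\int_0^tS(s)f_*\,ds\to\overline{f}_*$ to convergence in $L^1$, so $\R f_*=\overline{f}_*$, and $\overline{f}_*\in\rg(\R)\subseteq\mathcal{D}(A)$.

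I expect the delicate point to be the rigorous interchange of the $\lambda\downarrow0$ and $t\to\infty$ limits with the application of $P$, and the matching of the $L^1$-defined operators $\R$ and $K$ with the pointwise monotone suprema; justifying that these $L^1$ limits coincide a.e.\ with the increasing pointwise limits is exactly where strong stability (through Proposition~\ref{l:dyn}) and $\sigma$-finiteness enter, and it is the only genuinely nonroutine step.
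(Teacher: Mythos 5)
Your overall skeleton matches the paper's proof: deduce strong stability of $\{S(t)\}_{t\ge0}$ from $K$ being stochastic, establish $Kf=P(\varphi\R f)$ on the dense domain $\mathcal{D}(\R)$, and read the ``moreover'' part off Proposition~\ref{l:dyn} (your second paragraph spells out exactly what the paper compresses into ``the result follows from Proposition~\ref{l:dyn}''). The one substantive methodological difference is the device for identifying $\lim_{\lambda\downarrow0}\varphi R(\lambda,A)f$ with $\varphi\R f$: the paper notes $\rg(\R)\subseteq\mathcal{D}(A)\subseteq L^1_\varphi$, computes $\|\varphi\R f\|=\|f\|$ from \eqref{eq:defA} and $A\R f=-f$, and invokes closedness of the multiplication operator $M_\varphi$, whereas you push pointwise monotone suprema through the extension of $P$ to nonnegative measurable functions.

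That difference is where a genuine gap appears, at ``linearity extends this to all of $\mathcal{D}(\R)$''. Your monotone argument proves $Kf=P(\varphi\R f)$ only for $f\in\mathcal{D}(\R)_+$, hence by linearity on the span $\mathcal{D}(\R)_+-\mathcal{D}(\R)_+$; but this span can be a \emph{proper} subset of $\mathcal{D}(\R)$, because the improper integral in \eqref{eq:R0} may converge only through cancellation, so a sign-changing $f\in\mathcal{D}(\R)$ need not have $f^\pm\in\mathcal{D}(\R)$. Concretely, take the strongly stable killed translation semigroup $S(t)f(x)=f(x+t)$ on $L^1(0,\infty)$, so that $\int_0^t S(s)f\,ds\,(x)=\int_x^{x+t}f(u)\,du$, and let $f(u)=u^{-3/2}\sin u$ for $u\ge1$, $f=0$ otherwise: integration by parts gives $|\int_x^\infty f(u)\,du|=O(x^{-3/2})$, so $f\in\mathcal{D}(\R)$, while $\int_x^\infty f^+(u)\,du\asymp x^{-1/2}$ is not integrable, so $f^+\notin\mathcal{D}(\R)$ by the characterization of $\rg(A)$ in Proposition~\ref{l:dyn}; and if $f=g-h$ with $g,h\in\mathcal{D}(\R)_+$ then $f^+\le g$ would force $f^+\in\mathcal{D}(\R)$, so such an $f$ lies outside the span altogether. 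Since the corollary asserts that $K$ extends $(P(\varphi\R),\mathcal{D}(\R))$ on its whole domain, these $f$ must be covered. The repair is exactly the paper's closed-operator step, which needs only $f^\pm\in L^1$ rather than $f^\pm\in\mathcal{D}(\R)$: for any $f\in L^1$ the functions $\varphi R(\lambda,A)f^\pm$ increase as $\lambda\downarrow0$ with norms bounded by $\|f^\pm\|$ (by \eqref{eq:defA}), hence converge in $L^1$, so $\varphi R(\lambda,A)f$ converges in $L^1$; for $f\in\mathcal{D}(\R)$ one also has $R(\lambda,A)f\to\R f$ in $L^1$, and closedness of $M_\varphi$ identifies the limit as $\varphi\R f$, after which boundedness of $P$ and \eqref{eq:K} give $Kf=P(\varphi\R f)$. (Alternatively, within your pointwise framework, pass to a sequence $\lambda_n\downarrow0$ and identify $h^+-h^-=\R f$ a.e.\ on $\{\varphi>0\}$, where $h^\pm=\sup_\lambda R(\lambda,A)f^\pm$ are finite a.e.\ there because $\varphi h^\pm\in L^1$.) The rest of your argument, including the second part, is correct as written.
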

\begin{proof}
We have   $\rg(\R)\subseteq \mathcal{D}(A)$ and $ \mathcal{D}(A)\subseteq  L^1_{\varphi}$. Let $f\in \mathcal{D}(\R)_+$.
From \eqref{eq:defA} it follows that
\[
\|\varphi \R f\|=\int \varphi(x) \R f(x)m(dx)=-\int A\R f(x)m(dx).
\]
Since $A\R f=-f$, we obtain that $\|\varphi \R f\|=\|f\|$. The multiplication operator $M_\varphi\colon  L^1_{\varphi}\to
L^1$ defined by $M_\varphi f=\varphi f$ for $f\in\mathcal{D}(M_\varphi)= L^1_{\varphi}$ is closed. Since $\R
f=\lim_{\lambda \downarrow 0}R(\lambda,A)f$ and $\R f\in L_\varphi^1$, we obtain that $\lim_{\lambda\downarrow
0} \varphi R(\lambda,A)f =\varphi \R f$. Hence, $K f=P(\varphi \R f)$ and the result follows from
Proposition~\ref{l:dyn}.
\end{proof}

We next prove a partial converse of Theorem \ref{thm:exunifp}.

\begin{theorem}\label{thm:exunifpi}
Suppose that the semigroup $\{P(t)\}_{t\ge 0}$ has a subinvariant density $\tilde{f}_*\in \mathcal{D}(\C)$. Then
$P(\varphi \tilde{f}_*)<\infty$ a.e. and $P(\varphi \tilde{f}_*)$ is subinvariant for the operator $K$. Moreover, if $\varphi
\tilde{f}_*\in L^1$ then $\tilde{f}_*\in\mathcal{D}(A)$.
\end{theorem}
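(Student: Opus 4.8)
The plan is to derive all three assertions from a single decomposition of $\tilde f_*$ that splits the full resolvent $R(\mu,\C)$ into the ``free'' resolvent $R(\mu,A)$ plus a jump contribution carried by $P(\varphi\,\cdot\,)$. First I would extract what subinvariance gives at the level of the generator. Since $P(t)\tilde f_*\le \tilde f_*$ for every $t$ and $\tilde f_*\in\mathcal D(\C)$, dividing by $t$ and letting $t\downarrow 0$ yields $\C\tilde f_*\le 0$. Fixing $\mu>0$ and setting $g_\mu:=(\mu-\C)\tilde f_*=\mu\tilde f_*-\C\tilde f_*$, we then have $g_\mu\in L^1$, $g_\mu\ge \mu\tilde f_*\ge 0$, and $R(\mu,\C)g_\mu=\tilde f_*$.

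The key step is to prove, writing $w:=P(\varphi\tilde f_*)$,
\begin{equation}\label{eq:dcmp}
\tilde f_*=R(\mu,A)g_\mu+R(\mu,A)w,\qquad \mu>0.
\end{equation}
To establish \eqref{eq:dcmp} I would expand $\tilde f_*=R(\mu,\C)g_\mu$ by the series \eqref{eq:rp}: with $B_\mu:=P(\varphi R(\mu,A))$ and $V_n:=\sum_{k=0}^n B_\mu^k g_\mu\in L^1_+$, positivity of all operators makes the convergence monotone, so $R(\mu,A)V_n\uparrow \tilde f_*$. Splitting off the $k=0$ term gives $V_n=g_\mu+B_\mu V_{n-1}$, hence $R(\mu,A)V_n=R(\mu,A)g_\mu+R(\mu,A)\bigl(B_\mu V_{n-1}\bigr)$. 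Because $R(\mu,A)V_{n-1}\uparrow\tilde f_*$ and $\varphi\ge 0$, we have $\varphi R(\mu,A)V_{n-1}\uparrow\varphi\tilde f_*$, so by the monotone extension of $P$ the iterates $B_\mu V_{n-1}=P(\varphi R(\mu,A)V_{n-1})\uparrow P(\varphi\tilde f_*)=w$; applying the (likewise monotonically extended) positive operator $R(\mu,A)$ and passing to the limit yields \eqref{eq:dcmp}. In particular \eqref{eq:dcmp} gives the crucial inequality $R(\mu,A)w\le \tilde f_*$ for every $\mu>0$, so that $R(\mu,A)w\in L^1$.

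From here the three statements fall out. For $P(\varphi\tilde f_*)<\infty$ a.e.\ I would argue by contradiction: if $m(\{w=\infty\})>0$, pick $F\subseteq\{w=\infty\}$ with $0<m(F)<\infty$; then $w\ge c\,1_F$ for all $c>0$, whence $R(\mu,A)w\ge c\,R(\mu,A)1_F$ and therefore $R(\mu,A)w=\infty$ on $\{R(\mu,A)1_F>0\}$. Since $R(\mu,A)$ is injective (it is a resolvent) and positive, $R(\mu,A)1_F$ is a nonzero element of $L^1_+$, so this set has positive measure, contradicting $R(\mu,A)w\le\tilde f_*<\infty$ a.e. For subinvariance under $K$, I would use that by \eqref{eq:K} and monotonicity $Kw=\sup_{\mu>0}P(\varphi R(\mu,A)w)$; since $R(\mu,A)w\le\tilde f_*$ gives $\varphi R(\mu,A)w\le\varphi\tilde f_*$, positivity of $P$ yields $P(\varphi R(\mu,A)w)\le P(\varphi\tilde f_*)=w$ for every $\mu$, hence $Kw\le w$. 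Finally, if $\varphi\tilde f_*\in L^1$ then $w=P(\varphi\tilde f_*)\in L^1$, so $g_\mu+w\in L^1$ and \eqref{eq:dcmp} reads $\tilde f_*=R(\mu,A)(g_\mu+w)\in\rg R(\mu,A)=\mathcal D(A)$.

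The main obstacle is the careful bookkeeping in \eqref{eq:dcmp}: one must track which operators are applied to the genuinely integrable iterates $V_n$ versus to the possibly non-integrable limit $w$, and justify each monotone passage through the extensions of $P$ and of $R(\mu,A)$ to nonnegative measurable functions. The second delicate point, easy to overlook, is that the a.e.\ finiteness in the first assertion is \emph{not} automatic --- a stochastic operator can map a finite-a.e.\ function to $+\infty$ on a set of positive measure --- and that what rescues it is precisely the injectivity of the resolvent $R(\mu,A)$ combined with the bound $R(\mu,A)w\le\tilde f_*$.
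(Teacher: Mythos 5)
Your proof is correct and follows essentially the same route as the paper's: the same element $f_0=g_\mu=(\mu-\C)\tilde{f}_*\in L^1_+$, the same expansion of $R(\mu,\C)g_\mu$ via the series \eqref{eq:rp}, and the same monotone bookkeeping through the extensions of $P$ and $R(\mu,A)$, yielding the identity the paper records as \eqref{e:in} together with the bound $R(\mu,A)P(\varphi\tilde{f}_*)\le\tilde{f}_*$, from which subinvariance under $K$ and the final membership $\tilde{f}_*\in\mathcal{D}(A)$ follow exactly as in the paper. The one local difference is the a.e.-finiteness of $P(\varphi\tilde{f}_*)$, which the paper imports from \cite[Lemma 6.17]{banasiakarlotti06} (a.e.-finiteness of $\sup_n f_n$), whereas you derive it self-containedly from $R(\mu,A)P(\varphi\tilde{f}_*)\le\tilde{f}_*$ via injectivity and positivity of the resolvent --- a valid substitute for that citation.
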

\begin{proof}
Let $\lambda>0$ be fixed and let $f_0=\lambda \tilde{f}_*-\C \tilde{f}_*$. Since $e^{-\lambda t}P(t)\tilde{f}_*\le \tilde{f}_*$ for every $t>0$,
we obtain that $\C\tilde{f}_*\le \lambda \tilde{f}_*$. Thus $f_0\in L^1_+$.  Define
\[
f_n=\sum_{k=0}^n (P(\varphi R(\lambda,A)))^k f_0\quad \text{and}\quad \tilde{f}_n=R(\lambda,A)f_n,\quad n\ge 0.
\]
From \eqref{eq:rp} it follows that
\[
\lim_{n\to\infty}\tilde{f}_n=\lim_{n\to\infty}R(\lambda,A)f_n=R(\lambda,\C)(f_0)=\tilde{f}_*.
\]
We have $0\le f_n\le f_{n+1}\in L^1_+$, $n\ge 0$, and $\sup_{n}f_n<\infty$ a.e. (see e.g. \cite[Lemma
6.17]{banasiakarlotti06}). Moreover, $0\le \tilde{f}_n\le \tilde{f}_{n+1}\in \mathcal{D}(A)$, $n\ge 0$, and $\sup_{n}\tilde{f}_n=\tilde{f}_*\in
L_+^1$. Thus, we obtain that
\[
P(\varphi \tilde{f}_n)=P(\varphi R(\lambda, A))f_n=f_{n+1}-f_0\in L_+^1,
\]
which gives
\begin{equation}\label{e:in}
P(\varphi \tilde{f}_*)=\sup_{n}P(\varphi \tilde{f}_n)= \sup_nf_n-f_0.
\end{equation}
Consequently, $P(\varphi \tilde{f}_*)<\infty$ a.e.
Since $\lambda R(\lambda,A)$ is substochastic, the operator $R(\lambda,A)$ can be extended to the space of
nonnegative measurable functions by setting
\[
R(\lambda,A)f=\sup_{n}R(\lambda,A) f_n, \quad \text{if } f=\sup_{n }f_n,
\]
which implies that
\[
 R(\lambda,A)P(\varphi \tilde{f}_*)\le  R(\lambda,A)f= \tilde{f}_*.
\]
Since $\varphi R(\lambda, A)P(\varphi \tilde{f}_n)\le \varphi R(\lambda, A)P(\varphi \tilde{f}_{n+1})\in L^1_+$, we conclude
that
\[
P(\varphi R(\lambda,A))(P(\varphi \tilde{f}_*))=\sup_{n}P(\varphi R(\lambda, A)P(\varphi \tilde{f}_n))\le P(\varphi \tilde{f}_*),
\]
which gives $K(P(\varphi \tilde{f}_*))\le P(\varphi \tilde{f}_*)$ and completes the proof of the first part. Suppose now that $\varphi \tilde{f}_*\in L^1$. This implies that $P(\varphi \tilde{f}_*)\in L^1$ and that $f\in L^1$, by \eqref{e:in}. Hence,
$\tilde{f}_*=R(\lambda,A)f\in \mathcal{D}(A)$.
\end{proof}

\begin{corollary}\label{c:exunifpi}
Suppose that the semigroup $\{P(t)\}_{t\ge 0}$ has an invariant density $\tilde{f}_*$. Then $P(\varphi \tilde{f}_*)$ is
subinvariant for the operator $K$. Moreover, if $\varphi \tilde{f}_*\in L^1$ and $K$ is stochastic, then  $\|\varphi
\tilde{f}_*\|>0$ and $P(\varphi \tilde{f}_*)/\|\varphi \tilde{f}_*\|$ is an invariant density for~$K$.
\end{corollary}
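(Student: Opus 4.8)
The plan is to deduce both assertions directly from Theorem~\ref{thm:exunifpi}, whose hypothesis requires a subinvariant density lying in $\mathcal{D}(\C)$. First I would observe that an invariant density is automatically such an object: if $\tilde{f}_*$ is invariant for $\{P(t)\}_{t\ge 0}$ then $P(t)\tilde{f}_*=\tilde{f}_*$ for every $t\ge 0$, so that $t^{-1}(P(t)\tilde{f}_*-\tilde{f}_*)\equiv 0$ trivially converges as $t\downarrow 0$; hence $\tilde{f}_*\in\mathcal{D}(\C)$ and $\C\tilde{f}_*=0$. In particular $\tilde{f}_*$ is a subinvariant density in $\mathcal{D}(\C)$, so Theorem~\ref{thm:exunifpi} applies and yields at once that $P(\varphi\tilde{f}_*)<\infty$ a.e.\ and that $P(\varphi\tilde{f}_*)$ is subinvariant for $K$. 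This gives the first assertion.

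For the second assertion, assume $\varphi\tilde{f}_*\in L^1$ and that $K$ is stochastic. The integrability hypothesis lets me invoke the last statement of Theorem~\ref{thm:exunifpi} to get $\tilde{f}_*\in\mathcal{D}(A)$. Since $\C$ extends the operator $f\mapsto Af+P(\varphi f)$ on $\mathcal{D}(A)$, the relation $\C\tilde{f}_*=0$ becomes $A\tilde{f}_*=-P(\varphi\tilde{f}_*)$. The hard part will be to rule out the degenerate case $\|\varphi\tilde{f}_*\|=0$, which I expect to be the main obstacle. Here I would argue by contradiction: if $\|\varphi\tilde{f}_*\|=0$, then $\varphi\tilde{f}_*=0$ a.e., and since $P$ is stochastic $P(\varphi\tilde{f}_*)=0$, whence $A\tilde{f}_*=0$. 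For $\tilde{f}_*\in\mathcal{D}(A)$ this forces $S(t)\tilde{f}_*=\tilde{f}_*$ for all $t\ge 0$, because $\tfrac{d}{dt}S(t)\tilde{f}_*=S(t)A\tilde{f}_*=0$. But $K$ being stochastic is equivalent to strong stability of $\{S(t)\}_{t\ge 0}$, i.e.\ $S(t)\tilde{f}_*\to 0$; letting $t\to\infty$ would then give $\tilde{f}_*=0$, contradicting $\|\tilde{f}_*\|=1$. Hence $\|\varphi\tilde{f}_*\|>0$.

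With positivity of the norm in hand the conclusion is routine. Since $\varphi\tilde{f}_*\in L^1_+$ and $P$ is stochastic, $P(\varphi\tilde{f}_*)\in L^1_+$ with $\|P(\varphi\tilde{f}_*)\|=\|\varphi\tilde{f}_*\|>0$, so $g:=P(\varphi\tilde{f}_*)/\|\varphi\tilde{f}_*\|$ is a genuine density. By the first assertion $g$ is subinvariant for $K$, and since a subinvariant density of a stochastic operator is invariant, $g$ is an invariant density for $K$, which completes the proof.
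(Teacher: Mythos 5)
Your proof is correct and follows essentially the same route as the paper: identify $\tilde{f}_*\in\ker(\C)$, apply Theorem~\ref{thm:exunifpi} to get subinvariance and $\tilde{f}_*\in\mathcal{D}(A)$, and rule out the degenerate case via $\ker(A)=\{0\}$, which follows from strong stability of $\{S(t)\}_{t\ge 0}$ (equivalent to $K$ being stochastic). The only cosmetic differences are that you argue the contradiction from $\|\varphi\tilde{f}_*\|=0$ rather than $\|P(\varphi\tilde{f}_*)\|=0$ (equivalent since $P$ is stochastic) and that you spell out why $A\tilde{f}_*=0$ forces $S(t)\tilde{f}_*=\tilde{f}_*$, a step the paper leaves implicit.
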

\begin{proof} Recall that $\tilde{f}_*$ is a fixed point of each operator $P(t)$ if and only if $\tilde{f}_*\in \ker(\C)=\{f\in
\mathcal{D}(\C):\C f=0\}$. Thus, $\tilde{f}_*\in \mathcal{D}(\C)$ and $\C \tilde{f}_*=0$. From Theorem~\ref{thm:exunifpi} it follows
that $\tilde{f}_*\in \mathcal{D}(A)$, thus $\C \tilde{f}_*=A\tilde{f}_*+P(\varphi \tilde{f}_*)=0$. Suppose that $\|P(\varphi \tilde{f}_*)\|=0$. Then
$A\tilde{f}_*=0$, which implies that $\tilde{f}_*\in \ker(A)$. Since the operator $K$ is stochastic, condition \eqref{eq:sst} holds. Recall that $A$ is the generator of the semigroup $\{S(t)\}_{t\ge 0}$.  Thus $\ker(A)=\{0\}$ and we infer that
$\tilde{f}_*=0$, which contradicts the fact that $\|\tilde{f}_*\|=1$ and completes the proof that $f_*$ is a density. Because $K$ is
stochastic,  the subinvariant $f_*$ is invariant.
\end{proof}

We establish the following useful result when combined with Theorem~\ref{thm:pr00}.

\begin{corollary}\label{c:unique}
Assume that the operator $K$ is stochastic and uniquely mean ergodic with an invariant density $f_*$. Then the  semigroup $\{P(t)\}_{t\ge 0}$ is stochastic,   it can have at most one invariant density, and
$\varphi \tilde{f}_*\in L^1$ for any invariant density $\tilde{f}_*$. Moreover, if  $\R
f_*\in L^1$, where $\R$ is as in~\eqref{eq:R0}, then $\R f_*/\|\R f_*\|$ is the unique invariant density for the semigroup $\{P(t)\}_{t\ge 0}$.
\end{corollary}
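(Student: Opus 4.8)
The plan is to assemble the corollary from the machinery already developed, treating each clause in turn. First I would establish that the semigroup $\{P(t)\}_{t\ge 0}$ is stochastic. Since $K$ is stochastic and uniquely mean ergodic, its invariant density $f_*$ certainly satisfies $Kf_*=f_*$, so $f_*$ is in particular a subinvariant density for $K$; the mean ergodicity of $K$ already assumed (and recalled just before Theorem~\ref{thm:kato}) forces the minimal semigroup to be stochastic. This is immediate from the remark following \eqref{eq:sst}: if $K$ is mean ergodic then $\{P(t)\}_{t\ge 0}$ is stochastic.

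Next I would prove the uniqueness clause together with the integrability of $\varphi\tilde f_*$. Suppose $\tilde f_*$ is any invariant density for the (now stochastic) semigroup $\{P(t)\}_{t\ge 0}$. By Corollary~\ref{c:exunifpi}, $P(\varphi\tilde f_*)$ is subinvariant for $K$. I want to invoke Proposition~\ref{p:exsub} to conclude $P(\varphi\tilde f_*)\in L^1$, which is equivalent to $\varphi\tilde f_*\in L^1$ since $P$ preserves the $L^1$-norm of nonnegative functions on its domain. Proposition~\ref{p:exsub} applies because $K$ is uniquely mean ergodic with invariant density $f_*$; I must only check its hypothesis that $m(\supp f_*\cap\{P(\varphi\tilde f_*)<\infty\})>0$, and this holds because Corollary~\ref{c:exunifpi} (via Theorem~\ref{thm:exunifpi}) already gives $P(\varphi\tilde f_*)<\infty$ a.e., so that set has full measure on $\supp f_*$. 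Once $\varphi\tilde f_*\in L^1$, the second part of Corollary~\ref{c:exunifpi} yields that $P(\varphi\tilde f_*)/\|\varphi\tilde f_*\|$ is an invariant density for $K$; unique mean ergodicity of $K$ forces it to equal $f_*$, so $P(\varphi\tilde f_*)=\|\varphi\tilde f_*\|\,f_*$. Moreover, Theorem~\ref{thm:exunifpi} gives $\tilde f_*\in\mathcal{D}(A)$ and hence $A\tilde f_*+P(\varphi\tilde f_*)=\C\tilde f_*=0$, so $A\tilde f_*=-\|\varphi\tilde f_*\|\,f_*$. Applying $\R$ (which exists and is densely defined since $K$ stochastic means $\{S(t)\}$ is strongly stable, by the discussion around \eqref{eq:sst} and Proposition~\ref{l:dyn}) and using $\R A\tilde f_* = -\tilde f_*$ from Proposition~\ref{l:dyn} shows $\tilde f_* = \|\varphi\tilde f_*\|\,\R f_*$. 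Thus any invariant density is a scalar multiple of $\R f_*$, which both pins down the normalizing constant and proves there is at most one invariant density.

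Finally, for the existence clause, assume $\R f_*\in L^1$. By Corollary~\ref{c:exunifp} (applicable since $K$ is stochastic), integrability of $\overline{f}_*=\sup_{\lambda>0}R(\lambda,A)f_*$ is equivalent to $f_*\in\mathcal{D}(\R)$, and in that case $\overline{f}_*=\R f_*\in\mathcal{D}(A)$; so the hypothesis $\R f_*\in L^1$ gives $\overline{f}_*\in L^1$. Theorem~\ref{thm:exunifp} then tells us that $\overline{f}_*$ is subinvariant for $\{P(t)\}_{t\ge 0}$, and since the semigroup is stochastic and $\overline{f}_*\in L^1$ is nontrivial, $\overline{f}_*/\|\overline{f}_*\| = \R f_*/\|\R f_*\|$ is an invariant density. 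Combined with the uniqueness already established, this is the unique invariant density.

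The main obstacle I anticipate is the careful bookkeeping in the uniqueness argument: verifying that $\varphi\tilde f_*\in L^1$ really follows from Proposition~\ref{p:exsub} requires knowing $P(\varphi\tilde f_*)<\infty$ on a positive-measure subset of $\supp f_*$, and then translating the resulting identity $P(\varphi\tilde f_*)=\|\varphi\tilde f_*\|f_*$ back through the relation $\C\tilde f_*=A\tilde f_*+P(\varphi\tilde f_*)=0$ and Proposition~\ref{l:dyn} to recover $\tilde f_*=\|\varphi\tilde f_*\|\,\R f_*$. The delicate point is ensuring $\tilde f_*\in\mathcal{D}(A)$ (so that $\C\tilde f_* = A\tilde f_* + P(\varphi\tilde f_*)$ genuinely holds) before applying $\R$; this is exactly the content of the last sentence of Theorem~\ref{thm:exunifpi}, which is why establishing $\varphi\tilde f_*\in L^1$ must come first.
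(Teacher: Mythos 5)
Your proof is correct and follows essentially the same route as the paper's: Theorem~\ref{thm:exunifpi} together with Proposition~\ref{p:exsub} to get $\varphi\tilde{f}_*\in L^1$, Corollary~\ref{c:exunifpi} to identify $P(\varphi\tilde{f}_*)/\|\varphi\tilde{f}_*\|$ with $f_*$, mean ergodicity of $K$ for stochasticity of the semigroup, and Theorem~\ref{thm:exunifp} (via Corollary~\ref{c:exunifp}) for the existence clause. The only cosmetic difference is the uniqueness step: you invert $A$ through $\R$ (Proposition~\ref{l:dyn}) to obtain the explicit identity $\tilde{f}_*=\|\varphi\tilde{f}_*\|\,\R f_*$, whereas the paper compares two putative invariant densities and uses $\ker(A)=\{0\}$ --- both arguments rest on the same strong stability of $\{S(t)\}_{t\ge 0}$ guaranteed by $K$ being stochastic.
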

\begin{proof}  From Theorem~\ref{thm:exunifpi} it follows that if $f$ is an invariant
density for $\{P(t)\}_{t\ge 0}$ then $P(\varphi f)<\infty$ a.e. and $K(P(\varphi f))\le P(\varphi f)$. We have
$P(\varphi f)\in L^1$, by Proposition \ref{p:exsub}, implying that $\varphi f\in L^1$. Hence, $f\in
\mathcal{D}(A)$ and $f_*=P(\varphi f)/\|\varphi f\|$ is an invariant density for $K$, by
Corollary~\ref{c:exunifpi}. 
Suppose now that the semigroup $\{P(t)\}_{t\ge 0}$ has two invariant densities
$f_1,f_2$. We have  $\C f_1=0=\C f_2$ and $\C f=Af+P(\varphi f)$ for $f\in \mathcal{D}(A)$. Since $f_*$ is the unique
invariant density for the operator  $K$, we obtain that
\[
\frac{ P(\varphi f_1)}{\|\varphi f_1\|}=\frac{P(\varphi f_2)}{\|\varphi f_2\|},
\]
which implies that
\[
\frac{ A f_1}{\|\varphi f_1\|}=\frac{A f_2}{\|\varphi f_2\|}.
\]
The operator $K$ is stochastic thus $\ker(A)=\{0\}$ by \eqref{eq:sst}. Consequently
\[
\frac{ f_1}{\|\varphi f_1\|}=\frac{ f_2}{\|\varphi f_2\|}
\]
and  $f_1=f_2$, because  $\|f_1\|=\|f_2\|=1$. The last part follows from Theorem~\ref{thm:exunifp}.
\end{proof}

\begin{remark}
Observe that if the function $\varphi$ is bounded then the assumption that $K$  is mean ergodic is not needed in
Corollary~\ref{c:unique}, since then automatically the semigroup is stochastic and $P(\varphi f)\in L^1$ for
every $f\in L^1_+$. Instead we can only assume that $K$ has a unique  invariant density $f_*$.
\end{remark}

Before we give the proof of Theorem \ref{t:cont}, we recall the relation established in \cite[Section
5.2]{tyran09} between minimal PDMPs and the minimal semigroups.
Let $\{X(t)\}_{t\ge 0}$ be the minimal PDMP on $E$ with characteristics $(\pi,\varphi,\J )$ and let $m$ be a $\sigma$-finite measure on $\E=\mathcal{B}(E)$. We assume that
$P\colon  L^1\to  L^1$ is the transition operator corresponding to $\J $
and that the semigroup $\{S(t)\}_{t\geq0}$, with generator $(A,\mathcal{D}(A))$ satisfying \eqref{eq:defA}, is such that
\begin{equation}\label{eq:T0St} \int_E
e^{-\int_{0}^{t}\varphi(\pi_rx)dr}1_B(\pi_tx)f(x)\,m(dx)=\int_B S(t)f(x)\,m(dx)
\end{equation}
for all $t\ge 0$, $f\in  L^1_+$, $ B\in\E$.
Observe that if $\varphi$ satisfies  condition~\eqref{eq:phi2} then the semigroup $\{S(t)\}_{t\ge 0}$ is
strongly stable.
The semigroup $\{P(t)\}_{t\ge 0}$ will be referred to as the \emph{minimal semigroup on $ L^1$ corresponding to}
$(\pi,\varphi,\J )$.
The following result combines Theorem 5.2 and Corollary 5.3 from~\cite{tyran09}.
\begin{theorem}[\cite{tyran09}]\label{thm:exist} Let $(t_n)$ be the
sequence of jump times and
 $t_\infty=\lim_{n\to\infty} t_n$ be the explosion time for $\{X(t)\}_{t\ge 0}$.
Then the following hold:
\begin{enumerate}[\upshape(1)]
\item The operator  $K$ as defined in \eqref{eq:K}
is the transition operator corresponding to the discrete-time Markov process $(X(t_n))_{n\ge 0}$ with stochastic
kernel
\begin{equation}\label{eq:Kkernel}
\mathcal{K}(x,B)
=\int_{0}^\infty \J (\pi_sx,B)\varphi(\pi_sx)e^{-\int_{0}^s\varphi(\pi_rx)dr}ds, \quad x\in E, B\in\B(E).
\end{equation}
\item For  any $B\in\B(E)$, a density $f$, and $t>0$
\begin{equation*}
\int_{B}P(t)f(x)m(dx)=\int_E \Px(X(t)\in B,t<t_\infty)f(x)m(dx).
\end{equation*}
\item\label{cond:nexplosive} The semigroup $\{P(t)\}_{t\geq 0}$ is stochastic if and only if
\[
m\{x\in E:\ \Px(t_{\infty}<\infty)>0\}=0.
\]
In that case if the distribution of $X(0)$ has a density $f_0$
then $X(t)$ has the density $P(t)f_0$ for all $t>0$.
\end{enumerate}
\end{theorem}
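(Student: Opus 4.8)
The plan is to prove the three assertions in turn: part (1) by an explicit computation of the kernel of $K$; part (2), which is the core, by the Dyson--Phillips expansion of the minimal semigroup together with an induction on the number of jumps; and part (3) as a direct corollary of (2). Throughout I use that condition \eqref{eq:phi2} forces the semigroup $\{S(t)\}_{t\ge 0}$ to be strongly stable, so that the operator $\R$ of \eqref{eq:R0} satisfies $\R f=\lim_{\lambda\downarrow 0}R(\lambda,A)f$ on a dense domain (Proposition~\ref{l:dyn}), and also that $\mathcal{K}(x,E)=F_x(\infty)=1$ by \eqref{eq:phi2}, so $\mathcal{K}$ is a genuine stochastic kernel.

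For part (1), I first rewrite $K$. By strong stability and Corollary~\ref{c:exunifp} we have $Kf=P(\varphi\R f)$ for $f\in\mathcal{D}(\R)_+$, with $\R f=\int_0^\infty S(s)f\,ds$. Using the defining relation \eqref{d:top} of the transition operator $P$ associated with $\J$, for $B\in\B(E)$,
\[
\int_B Kf(x)\,m(dx)=\int_E \J(x,B)\,\varphi(x)\,\R f(x)\,m(dx).
\]
Next I extend \eqref{eq:T0St} from indicators to an arbitrary nonnegative measurable $h$ by simple-function approximation, obtaining $\int_E h(x)S(s)f(x)\,m(dx)=\int_E e^{-\int_0^s\varphi(\pi_r x)\,dr}h(\pi_s x)f(x)\,m(dx)$; taking $h(x)=\J(x,B)\varphi(x)$, integrating in $s\in(0,\infty)$ and applying Tonelli's theorem yields exactly $\int_E \mathcal{K}(x,B)f(x)\,m(dx)$ with $\mathcal{K}$ as in \eqref{eq:Kkernel}, so $K$ is the transition operator corresponding to $\mathcal{K}$. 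Finally, differentiating \eqref{eq:cdf} gives $dF_x(s)=\varphi(\pi_s x)e^{-\int_0^s\varphi(\pi_r x)\,dr}\,ds$, so that the probabilistic one-step law $\Px(X(t_1)\in B)=\int_0^\infty \J(\pi_s x,B)\,dF_x(s)$ coincides with $\mathcal{K}(x,B)$; this identifies $\mathcal{K}$ as the transition kernel of the chain $(X(t_n))_{n\ge 0}$.

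Part (2) is the main obstacle. The idea is to expand the minimal semigroup as $P(t)=\sum_{n=0}^\infty P_n(t)$, the Dyson--Phillips series associated with the perturbation in \eqref{eq:rp}, where $P_0(t)=S(t)$ and $P_{n+1}(t)f=\int_0^t S(t-s)\,P(\varphi\,P_n(s)f)\,ds$, each iterate obtained by Laplace inversion of the corresponding term of \eqref{eq:rp}. The claim is that, for every $n$,
\[
\int_B P_n(t)f(x)\,m(dx)=\int_E \Px\bigl(X(t)\in B,\ t_n\le t<t_{n+1}\bigr)f(x)\,m(dx),
\]
which I prove by induction on $n$. The base case $n=0$ is \eqref{eq:T0St} rewritten, since on $\{t<t_1\}$ one has $X(t)=\pi_t x$ and $\Px(t<t_1)=e^{-\int_0^t\varphi(\pi_r x)\,dr}$. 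For the inductive step I condition on the first jump: decomposing $\{t_{n+1}\le t<t_{n+2}\}$ according to the value $s$ of the first jump time and the post-jump position governed by $\J$, the regenerative (strong Markov) structure of the PDMP reproduces precisely $\int_0^t S(t-s)P(\varphi P_n(s)f)\,ds$, the operator $P(\varphi\,\cdot)$ encoding the jump (weighting by intensity $\varphi$ and applying the kernel $\J$) and $S(t-s)$ the deterministic flow after it. The delicate points are justifying the term-by-term identification of the Dyson--Phillips iterates and the measurability and Fubini interchanges; once the identity holds for each $n$, summing over $n$ and using $\{t<t_\infty\}=\bigcup_n\{t_n\le t<t_{n+1}\}$ together with monotone convergence gives the stated formula.

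Part (3) follows by taking $B=E$ in part (2): for $f\in D(m)$,
\[
\|P(t)f\|=\int_E P(t)f(x)\,m(dx)=\int_E \Px(t<t_\infty)f(x)\,m(dx),
\]
so $\{P(t)\}_{t\ge 0}$ is stochastic, i.e.\ $\|P(t)f\|=\|f\|$ for all $t>0$ and all densities $f$, if and only if $\Px(t<t_\infty)=1$ for $m$-a.e.\ $x$ and all $t>0$; choosing a strictly positive density (possible since $m$ is $\sigma$-finite) shows the exceptional $m$-null set may be taken independent of $f$. Since $\{t<t_\infty\}\downarrow\{t_\infty=\infty\}$ as $t\to\infty$, this is equivalent to $\Px(t_\infty=\infty)=1$, that is $m\{x:\Px(t_\infty<\infty)>0\}=0$. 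In the non-explosive case $\Px(X(t)\in B)=\Px(X(t)\in B,\ t<t_\infty)$, so conditioning on $X(0)$ with density $f_0$ and invoking part (2) gives $\Pr(X(t)\in B)=\int_B P(t)f_0(x)\,m(dx)$, i.e.\ $X(t)$ has density $P(t)f_0$.
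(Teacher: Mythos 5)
The paper offers no proof of Theorem~\ref{thm:exist} at all --- it is imported verbatim from \cite{tyran09} (Theorem 5.2 and Corollary 5.3 there) --- and your proposal follows essentially the same route as that reference: identify the terms of the Dyson--Phillips series attached to the resolvent expansion \eqref{eq:rp} with the events $\{t_n\le t<t_{n+1}\}$ by induction on the number of jumps, deduce (1) from $Kf=P(\varphi\R f)$ together with \eqref{d:top} and \eqref{eq:T0St}, and obtain (3) by taking $B=E$ in (2). One slip of wording, which does not affect the argument: the recursion $P_{n+1}(t)f=\int_0^t S(t-s)P(\varphi P_n(s)f)\,ds$ that you write down corresponds to conditioning on the \emph{last} jump before $t$ (conditioning on the first jump, as your text says, would instead give $P_{n+1}(t)f=\int_0^t P_n(t-s)P(\varphi S(s)f)\,ds$); the two expansions sum to the same semigroup, so either version carries the induction through.
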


Theorem \ref{t:cont} is a direct consequence of the following result. Observe also that it follows from condition~\eqref{cond:nexplosive} of Theorem \ref{thm:exist} that the process $X$ is non-explosive.

\begin{theorem}\label{t:contsemigroup}
Let $K$ be the transition operator corresponding to the stochastic kernel given by \eqref{eq:Kkernel}.
Suppose  that $K$ has a unique invariant density $f_*$ and that $f_*>0$ a.e. Then the minimal semigroup $\{P(t)\}_{t\geq0}$ corresponding to $(\pi,\varphi,\J)$ is stochastic and it can have at most one invariant density. Moreover, if condition \eqref{eq:R0v} holds, then the semigroup $\{P(t)\}_{t\geq0}$   has a unique invariant density and it is strictly positive a.e.
\end{theorem}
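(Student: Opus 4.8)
The plan is to reduce the entire statement to Corollary~\ref{c:unique}, whose two hypotheses I must first verify: that $K$ is stochastic, and that it is uniquely mean ergodic with invariant density $f_*$. Since $\varphi$ satisfies \eqref{eq:phi2}, the semigroup $\{S(t)\}_{t\ge 0}$ is strongly stable, so by the characterization recalled just after \eqref{eq:K} (from \cite[Theorem~3.6]{tyran09}) the operator $K$ is stochastic. The standing assumption that $K$ has a unique invariant density $f_*$ with $f_*>0$ a.e. then makes $K$ uniquely mean ergodic, via \cite[Theorem 5.2.2]{almcmbk94}. With both hypotheses established, Corollary~\ref{c:unique} yields at once the first assertions of the theorem: $\{P(t)\}_{t\ge 0}$ is stochastic and has at most one invariant density.

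For the second part, the strategy is to show that condition \eqref{eq:R0v} is \emph{exactly} the requirement $\R f_*\in L^1$; once this is known, the ``moreover'' clause of Corollary~\ref{c:unique} delivers $\R f_*/\|\R f_*\|$ as the unique invariant density. The crux is therefore a single computation of the norm $\|\R f_*\|$. Writing $\R f_*=\int_0^\infty S(s)f_*\,ds$ and using that every integrand is nonnegative, Tonelli's theorem gives $\|\R f_*\|=\int_0^\infty\int_E S(s)f_*(x)\,m(dx)\,ds$. Taking $B=E$ in the identity \eqref{eq:T0St} collapses the inner integral to $\int_E e^{-\int_0^s\varphi(\pi_r x)\,dr}f_*(x)\,m(dx)$, since $\pi_s x\in E$ for all $s$. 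A second application of Tonelli, together with the observation that $\int_0^\infty e^{-\int_0^s\varphi(\pi_r x)\,dr}\,ds=\Exx(t_1)$ — which follows from \eqref{eq:cdf} because $\Px(t_1>s)=1-F_x(s)=e^{-\int_0^s\varphi(\pi_r x)\,dr}$ — then produces the clean identity $\|\R f_*\|=\int_E\Exx(t_1)f_*(x)\,m(dx)$. Hence \eqref{eq:R0v} holds if and only if $\R f_*\in L^1$.

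It remains to handle strict positivity. Since $f_*$ is an invariant, hence subinvariant, density for $K$ with $f_*>0$ a.e., Corollary~\ref{c:pos} gives $\overline{f}_*>0$ a.e. Because $K$ is stochastic and, by the computation above, $\R f_*\in L^1$ (equivalently $f_*\in\mathcal{D}(\R)$), Corollary~\ref{c:exunifp} identifies $\overline{f}_*=\R f_*$. Therefore the unique invariant density $\R f_*/\|\R f_*\|$ inherits strict positivity a.e., completing the proof.

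The main obstacle is the norm computation in the second paragraph: it is the only point where the probabilistic quantity $\Exx(t_1)$ must be matched with the functional-analytic object $\R f_*$. The work there is to justify the two interchanges of integration — routine given nonnegativity, via Tonelli — and to recognize the survival-function identity $\Px(t_1>s)=e^{-\int_0^s\varphi(\pi_r x)\,dr}$ coming from \eqref{eq:cdf}. Everything else is assembly of the corollaries already proved.
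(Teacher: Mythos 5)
Your proposal is correct and follows essentially the same route as the paper: reduce the first assertions to Corollary~\ref{c:unique} via unique mean ergodicity of the stochastic operator $K$, and identify $\|\R f_*\|=\int_E\Exx(t_1)f_*(x)\,m(dx)$ by the same Tonelli computation using \eqref{eq:T0St} and the survival function from \eqref{eq:cdf}, so that \eqref{eq:R0v} is exactly $\R f_*\in L^1$. Your final paragraph on strict positivity via Corollaries~\ref{c:pos} and~\ref{c:exunifp} makes explicit a step the paper leaves implicit, which is a welcome addition rather than a deviation.
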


\begin{proof}
Since  the stochastic operator $K$ has a unique invariant density $f_*$ and  $f_*>0$ a.e., $K$ is  uniquely mean ergodic. Thus the first assertion follows from
Corollary \ref{c:unique}.  If, moreover, condition \eqref{eq:R0v} holds then $\R f_*\in L^1$,  where $\R $ is defined by \eqref{eq:R0}, since
\[
\|\R f_*\|=\int_{0}^\infty \|S(t)f_*\|dt =\int_{0}^\infty\int_{E}e^{-\int_0^t \varphi(\pi_rx)dr} f_*(x)\,m(dx)dt=\int_E\Exx(t_1)f_*(x)m(dx).
\]
In that case $\tilde{f}_*=\R f_*/\left\|\R f_*\right\|$ is the unique invariant density for $\{P(t)\}_{t\geq 0}$. \end{proof}

We conclude this section with the following characterization of asymptotic behavior of the minimal semigroup.

\begin{corollary}\label{cor:as_stab}
Assume that  the minimal semigroup  $\{P(t)\}_{t\geq 0}$ is partially integral. Suppose  that $K$ has a unique invariant density $f_*$ and that $f_*>0$ a.e.
Then $\{P(t)\}_{t\geq 0}$ is asymptotically stable if and only if
condition \eqref{eq:R0v} holds.
\end{corollary}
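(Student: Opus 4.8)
The plan is to prove the two implications separately, using Theorem~\ref{t:contsemigroup} together with the Pichor--Rudnicki dichotomy Theorem~\ref{thm:pr00} for the forward direction, and the inverse relationship between the generator $A$ and the operator $\R$ for the converse. Throughout I would use that $K$ is uniquely mean ergodic, since its unique invariant density $f_*$ is strictly positive a.e.; thus Corollary~\ref{c:unique} already guarantees that $\{P(t)\}_{t\ge 0}$ is stochastic, that it admits at most one invariant density, and that $\varphi\tilde{f}_*\in L^1$ for any invariant density $\tilde{f}_*$. I would also record at the outset that, by the standing assumption~\eqref{eq:phi2}, the semigroup $\{S(t)\}_{t\ge 0}$ is strongly stable, so that the second part of Proposition~\ref{l:dyn} applies and $\R Af=-f$ for all $f\in\mathcal{D}(A)$.

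For the implication that \eqref{eq:R0v} implies asymptotic stability, I would invoke Theorem~\ref{t:contsemigroup}: condition~\eqref{eq:R0v} yields that $\{P(t)\}_{t\ge 0}$ has a unique invariant density $\tilde{f}_*=\R f_*/\|\R f_*\|$ and that $\tilde{f}_*>0$ a.e. Since by hypothesis the semigroup is partially integral and stochastic with $\tilde{f}_*$ its only invariant density and $\tilde{f}_*>0$ a.e., all hypotheses of Theorem~\ref{thm:pr00} are met, so $\{P(t)\}_{t\ge 0}$ is asymptotically stable.

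For the converse, suppose $\{P(t)\}_{t\ge 0}$ is asymptotically stable with invariant density $\tilde{f}_*$. The goal is to show $\R f_*\in L^1$, since the computation in the proof of Theorem~\ref{t:contsemigroup} identifies $\|\R f_*\|$ with the left-hand side of~\eqref{eq:R0v}. By Corollary~\ref{c:unique} we have $\varphi\tilde{f}_*\in L^1$, and then Corollary~\ref{c:exunifpi} shows that $P(\varphi\tilde{f}_*)/\|\varphi\tilde{f}_*\|$ is an invariant density for $K$; by uniqueness it must equal $f_*$, so $P(\varphi\tilde{f}_*)=\|\varphi\tilde{f}_*\|f_*$ with $\|\varphi\tilde{f}_*\|>0$. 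As in the proof of Corollary~\ref{c:exunifpi}, $\tilde{f}_*\in\mathcal{D}(A)$ and $\C\tilde{f}_*=A\tilde{f}_*+P(\varphi\tilde{f}_*)=0$, whence $A\tilde{f}_*=-\|\varphi\tilde{f}_*\|f_*$. Applying $\R$ and using $\R A\tilde{f}_*=-\tilde{f}_*$ from Proposition~\ref{l:dyn} gives $\|\varphi\tilde{f}_*\|\,\R f_*=\tilde{f}_*$, so $\R f_*=\tilde{f}_*/\|\varphi\tilde{f}_*\|\in L^1$ and~\eqref{eq:R0v} follows.

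The step I expect to be the most delicate is the converse, specifically the passage from the generator identity $A\tilde{f}_*=-\|\varphi\tilde{f}_*\|f_*$ to $\R f_*\in L^1$: it relies on $A\tilde{f}_*$ lying in $\mathcal{D}(\R)=\rg(A)$ and on the identity $\R A=-I$ on $\mathcal{D}(A)$, both of which are available only because strong stability of $\{S(t)\}_{t\ge 0}$, guaranteed by~\eqref{eq:phi2}, places us in the second part of Proposition~\ref{l:dyn}. One must also take care that the invariant density furnished by asymptotic stability is precisely the one whose image $P(\varphi\tilde{f}_*)$ is a multiple of $f_*$; this is exactly where the uniqueness of the invariant density of $K$, combined with Corollary~\ref{c:exunifpi}, is essential.
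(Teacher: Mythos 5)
Your proof is correct, and your forward implication is exactly the paper's: condition \eqref{eq:R0v} plus Theorem~\ref{t:contsemigroup} yields a unique, a.e.\ strictly positive invariant density, and Theorem~\ref{thm:pr00} (with the assumed partial integrality) gives asymptotic stability. Your converse, however, is genuinely different from the paper's. The paper argues via subinvariance: it shows $\overline{f}_*=\R f_*$ is subinvariant for $\{P(t)\}_{t\ge0}$ (Theorem~\ref{thm:exunifp}), uses the kernel identity $\int_E\varphi(x)\R f_*(x)\,m(dx)=1$ to get $\R f_*<\infty$ a.e.\ on $\supp\varphi$, checks $m(\supp\tilde{f}_*\cap\supp\varphi)>0$ via Corollaries~\ref{c:unique} and~\ref{c:exunifpi}, and then invokes Corollary~\ref{p:exsubct} --- i.e., the unique mean ergodicity of each $P(s)$ that asymptotic stability provides --- to upgrade the subinvariant function $\R f_*$ to an integrable one. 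You instead extract an exact identity: from $\varphi\tilde{f}_*\in L^1$ (Corollary~\ref{c:unique}), Theorem~\ref{thm:exunifpi}, Corollary~\ref{c:exunifpi} and the uniqueness of $f_*$ you obtain $A\tilde{f}_*=-\|\varphi\tilde{f}_*\|f_*$, and strong stability of $\{S(t)\}_{t\ge0}$ (guaranteed by \eqref{eq:phi2}) places you in the second part of Proposition~\ref{l:dyn}, so that $A\tilde{f}_*\in\rg(A)\subseteq\mathcal{D}(\R)$, hence $f_*\in\mathcal{D}(\R)$ by linearity of $\mathcal{D}(\R)$ (a small domain step worth stating explicitly before applying $\R$ to both sides), and $\R f_*=\tilde{f}_*/\|\varphi\tilde{f}_*\|\in L^1$; the identification of $\|\R f_*\|$ with the integral in \eqref{eq:R0v}, as computed in Theorem~\ref{t:contsemigroup}, then finishes the argument. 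Your route buys something real: it never uses asymptotic stability, only the existence of an invariant density for the semigroup, so it establishes the sharper necessity statement alluded to in the introduction (that \eqref{eq:R0v} is necessary for the mere existence of an invariant density under the hypotheses on $K$), and it additionally pins down the normalization $\|\R f_*\|=1/\|\varphi\tilde{f}_*\|$. What the paper's route buys is a reusable soft principle --- under asymptotic stability, any subinvariant function finite on a positive-measure part of the support of the invariant density is integrable --- which applies even when one cannot identify $A\tilde{f}_*$ exactly.
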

\begin{proof} The semigroup $\{P(t)\}_{t\geq 0}$ is stochastic. If condition \eqref{eq:R0v} holds then Theorems~\ref{t:cont} and~\ref{thm:pr00} imply asymptotic stability. To get the converse we show that we can apply  Corollary~\ref{p:exsubct} to $\R f_*$.
Since $P$ is the transition operator corresponding to~$\J $, we obtain, by approximation,
equation~\eqref{eq:T0St}, and Fubini's theorem,
\[
\begin{split}
\int_{B} P(\varphi \R f)(x)m(dx)=\int_{E}\J (x,B)\varphi(x)\R f(x)m(dx)=\int_{E}\mathcal{K}(x,B) f(x)m(dx)
\end{split}
\]
for all $B\in\B(E)$ and $f\in D(m)$. Substituting $f=f_*$ and $B=E$ gives
\[
\int_{E}\varphi(x)\R f_*(x)m(dx)=\int_{E}f_*(x)m(dx)=1,
\]
which implies that  $\varphi(x)\R f_*(x)<\infty$ for $m$-a.e.~$x\in E$.  Hence $\supp \varphi\subseteq \{x:\R f_*(x)<\infty\}$. From Corollary~\ref{c:unique} it follows that $\varphi \tilde{f}_*\in L^1$  for any invariant density $\tilde{f}_*$ for the semigroup $\{P(t)\}_{t\geq 0}$, which, by Corollary~\ref{c:exunifpi}, implies that $m(\supp \tilde{f}_*\cap\supp \varphi)>0$. From Theorem~\ref{thm:exunifp} it follows that
$\overline{f}_*=\R f_*$ is subinvarint for the semigroup $\{P(t)\}_{t\geq 0}$. Consequently, $m(\supp \tilde{f}_*\cap\{x:\R f_*(x)<\infty\})>0$ and if the semigroup is asymptotically stable then Corollary~\ref{p:exsubct}  implies that $\R f_*\in L^1$ giving condition  \eqref{eq:R0v}.
\end{proof}

\section{Sufficient conditions for existence of a unique invariant density}\label{sec:pdmp}

Let the standing hypothesis from Introduction hold and let $L^1=L^1(E,\mathcal{B}(E),m)$, where $m$ is the Lebesgue measure on $\mathbb{R}^d$. The transition
operator $P$ corresponding to $\J $, as in \eqref{d:jumpkernel}, is of the form
\[
P f=\int_\Theta \widehat{T}_{\theta}(p_\theta f)\nu(d\theta),\quad f\in L^1,
\]
where $\widehat{T}_{\theta}$ is the Frobenius-Perron operator for $T_\theta$.
The stochastic kernel $\mathcal{K}$ in \eqref{eq:Kkernel} is given by
\begin{equation*}
\mathcal{K}(x,B)=\int_{0}^\infty\int_\Theta 1_B(T_\theta(\pi_s x))p_\theta(\pi_s x)\nu(d\theta)\varphi(\pi_s
x)e^{-\int_0^s \varphi(\pi_rx)dr}ds
\end{equation*}
for $ x\in E,B\in \B(E),$ and can be represented as
\begin{equation}\label{eq:kernel_K}
\mathcal{K}(x,B)=\int_{\Theta\times (0,\infty)}1_B(T_{(\theta,s)} (x))k_{(\theta,s)}(x)\nu(d\theta)ds,
\end{equation}
where
\begin{equation}\label{eq:T,k}
T_{(\theta,s)}(x)=T_\theta(\pi_s x)\quad\text{and}\quad k_{(\theta,s)}(x)=p_\theta(\pi_s x)\varphi(\pi_s
x)e^{-\int_0^s \varphi(\pi_rx)dr}
\end{equation}
for all $(\theta,s)\in \Theta\times (0,\infty)$, $x\in E$. The transition operator $K$ on $L^1$ corresponding to
$\mathcal{K}$ becomes
\[
Kf=\int_{\Theta\times (0,\infty)} \widehat{T}_{(\theta,s)}(k_{(\theta,s)} f)\nu(d\theta)ds,\quad f\in L^1.
\]

Given $\theta^n=(\theta_1,\ldots,\theta_n)\in \Theta^n$ and $s^n=(s_1,\ldots,s_n)\in (0,\infty)^n$ we denote by $(\theta^n,s^n)$ the sequence $(\theta^{n},s^{n}) = (\theta_{n}, s_{n}, \ldots, \theta_1,s_1)$.
We define inductively transformations $T_{(\theta^n,s^n)}$ for $n\ge 1$, by setting
\[
\begin{split}
T_{(\theta^1,s^1)}(x) &= T_{(\theta_1,s_1)}(x),\\
T_{(\theta^{n+1},s^{n+1})}(x) &= T_{(\theta_{n+1},s_{n+1})}(T_{(\theta^n,s^n)}(x)),
\end{split}
\]
and nonnegative functions $k_{(\theta^n,s^n)}$ by
\[
\begin{split}
k_{(\theta^1,s^1)}(x)&=k_{(\theta_1,s_1)}(x),
\\ k_{(\theta^{n+1},s^{n+1})}(x)&=
k_{(\theta_{n+1},s_{n+1})}(T_{(\theta^n,s^n)}(x)) k_{(\theta^n,s^{n})}(x).
\end{split}
\]
Consequently, the $n$th iterate stochastic kernel $\mathcal{K}^n$ is of the form
\[
\mathcal{K}^n (x,B) = \int_{\Theta^n \times (0,\infty)^n} 1_B(T_{(\theta^n,s^n)}(x)) k_{(\theta^n,s^n)}(x) \nu^n(d\theta^n) ds^n,
\]
where $\nu^n=\nu\times\ldots\times\nu$ denotes the product of the measure $\nu$ on $\Theta^n$.

In the rest of this section we assume that both mappings  $(\theta,x)\mapsto T_\theta (x)$ and  $(\theta,x)\mapsto p_\theta (x)$ are
continuous as well as the intensity function $\varphi$. Furthermore, for every $x\in E$ and $\theta^n\in \Theta^n$ let the transformation $s^n\mapsto T_{(\theta^n,s^n)}(x)$ be continuously differentiable and let $\frac{\partial}{\partial s^n}T_{(\theta^n,s^n)}(x)$
denote its derivative.

\begin{lemma}\label{t:meyn}
Let $x_0\in E$. Assume that there exists $(\theta^n,s^n) \in \Theta^n\times (0,\infty)^n$ such that $k_{(\theta^n,s^n)}(x_0)>0$ and the rank of $\frac{\partial}{\partial s^n}T_{(\theta^n,s^n)}(x_0)$ is equal to $d$. Then there exist a constant $c_0>0$ and open sets $U_{x_0}$, $U_{y_{0}}$ containing $x_{0}$ and $y_0=T_{(\theta^{n},s^{n})}(x_0)$, respectively, such that for all $B\in\mathcal{B}(E)$ and $x\in E$
\[
\mathcal{K}^n(x,B) \geq c_0 1_{U_{x_0}}(x) m(B\cap U_{y_0}).
\]
\end{lemma}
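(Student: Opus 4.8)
The plan is to use the rank-$d$ hypothesis on $\frac{\partial}{\partial s^n}T_{(\theta^n,s^n)}(x_0)$ to change variables in the time integral defining $\mathcal{K}^n$, pushing the Lebesgue measure $ds^n$ forward onto a measure that dominates $m$ near $y_0$; joint continuity of the local characteristics will then upgrade this to a bound uniform for $x$ near $x_0$. Write $\Phi(x,s^n)=T_{(\theta^n,s^n)}(x)$. Since the $d\times n$ matrix $\partial_{s^n}\Phi$, evaluated at $(x_0,s^n)$, has rank $d$, I select $d$ of the $n$ time coordinates—denote them $u$, with base value $u_0$—whose columns form a nonsingular $d\times d$ minor, and group the remaining $n-d$ coordinates into $w$, with base value $w_0$, so that $\det\partial_u\Phi(x_0,u_0,w_0)\neq0$.

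The key step is a parametrized inverse function theorem in which both $x$ and the ``frozen'' coordinates $w$ play the role of parameters. The map $G(x,u,w)=(x,w,\Phi(x,u,w))$ has Jacobian determinant equal, up to sign, to $\det\partial_u\Phi$, which is nonzero at $(x_0,u_0,w_0)$; hence $G$ is a diffeomorphism of a neighborhood of $(x_0,u_0,w_0)$ onto a neighborhood of $(x_0,w_0,y_0)$. Shrinking, I extract a ball $U_{x_0}\ni x_0$, a box $S_0\ni w_0$ in the frozen coordinates (of positive volume $\ell$), a ball $V\ni u_0$, and a ball $U_{y_0}\ni y_0$ such that for every $(x,w)\in U_{x_0}\times S_0$ the map $u\mapsto\Phi(x,u,w)$ is a $C^1$ diffeomorphism of $V$ onto an open set with $U_{y_0}\subseteq\{\Phi(x,u,w):u\in V\}$, and $|\det\partial_u\Phi(x,u,w)|\le M<\infty$ on the relevant compact set.

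Next I produce the minorization. Joint continuity of $(\theta,x)\mapsto T_\theta(x)$, of $(\theta,x)\mapsto p_\theta(x)$, and of $\varphi$ makes $k_{(\theta^n,s^n)}(x)$ continuous and positive at the base point, so after a further shrinking $k_{(\theta^n,s^n)}(x)\ge\kappa>0$ for $x\in U_{x_0}$, $(u,w)\in V\times S_0$, and $\theta^n$ in a neighborhood $W$ of its given value. Bounding $\mathcal{K}^n$ below by its integral over $W\times S_0\times V$ and changing variables $y=\Phi(x,u,w)$ in the inner $u$-integral (whose Jacobian factor is $\ge M^{-1}$ and whose image covers $U_{y_0}$), I get for each $x\in U_{x_0}$
\[
\mathcal{K}^n(x,B)\ \ge\ \kappa\,M^{-1}\,\ell\,\nu^n(W)\,m(B\cap U_{y_0}),
\]
which is the asserted inequality with $c_0=\kappa\,M^{-1}\,\ell\,\nu^n(W)$, the factor $1_{U_{x_0}}$ recording that the bound is vacuous for $x\notin U_{x_0}$.

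I expect the main obstacle to be the uniformity in $x$: the same pair $U_{x_0},U_{y_0}$ must satisfy $U_{y_0}\subseteq\{\Phi(x,u,w):u\in V\}$ simultaneously for all $(x,w)\in U_{x_0}\times S_0$, with Jacobians bounded—precisely what applying the inverse function theorem to $G$ (rather than to $\Phi(x_0,\cdot,w_0)$ alone) together with a compactness argument delivers. A secondary point is that $c_0>0$ requires $\nu^n(W)>0$: continuity of $p_\theta$ in $\theta$ keeps $k$ positive on $W$, and positivity of $\nu^n(W)$ reflects that the jump law charges parameter directions near $\theta^n$.
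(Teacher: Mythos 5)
Your proposal is correct and follows essentially the same route as the paper's own proof (an adaptation of Lemma 6.3 of Bena\"{\i}m et al.): select $d$ time coordinates $u$ whose minor of $\partial_{s^n}T_{(\theta^n,s^n)}(x_0)$ is invertible, augment the map with the frozen coordinates (the paper's $Q_{x,w}(u,v)=(T_{(w,(u,v))}(x),v)$ plays the role of your $G$), use joint continuity to bound $k_{(w,(u,v))}(x)\,\bigl|\det\bigl[\frac{\partial Q}{\partial(u,v)}\bigr]\bigr|^{-1}$ below by a constant on a product neighborhood, and change variables, arriving at the same constant $c_0=c\cdot(\text{volume of the frozen time box})\cdot\nu^n(\text{neighborhood of }\theta^n)$. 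The one point to repair in your write-up: since $T_\theta(x)$ is assumed only continuous (not differentiable) in $x$ and $\theta$, the inverse function theorem cannot literally be applied to $G(x,u,w)=(x,w,\Phi(x,u,w))$ with $x$ as an active coordinate (its full Jacobian need not exist); instead $x$ and $\theta^n$ must stay parameters and one invokes the parameter-uniform inverse function theorem with merely continuous parameter dependence --- exactly the ``parametrized'' version you name, and the same tacit uniformity underlying the paper's inclusion $U_{y_0}\times U_{\bar{v}}\subset Q_{x,w}(U_{\bar{u}}\times U_{\bar{v}})$ for all $x\in U_{x_0}$, $w\in U_{\bar{w}}$.
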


\begin{proof}
We adapt the proof of Lemma 6.3 in \cite{benaim12} to our situation. If the rank of $\frac{\partial}{\partial s_n}T_{(\theta^n,s^n)}(x_0)$ is equal to $d$, then we can choose $d$ variables $s_{i_1},\ldots,s_{i_d}$ from $s^n=(s_1,\ldots,s_n)$ in such a way that the derivative of the transformation $(s_{i_1},\ldots,s_{i_d})\mapsto T_{(\theta^n,s^n)}(x_0)$ is invertible. In that case, we write $u=(s_{i_1},\ldots,s_{i_d})$ and we take $v$ as the remaining coordinates of $s^n$, so that, up to the order of coordinates, we denote $s^n$ by $(u,v)$. We also write $w$ for $\theta^n$.
By assumption, there exists $(\bar{u},\bar{v},\bar{w})$ such that $k_{(\bar{w},(\bar{u},\bar{v}))}(x_0)>0$ and the rank of $\frac{\partial}{\partial(u,v)}T_{(w,(u,v))}(x_0)$ is equal to $d$ for $u=\bar{u}$, $v=\bar{v}$, $w=\bar{w}$ so, in what follows, we identify every $s^n$ with this particular choice of coordinates $u$ and $v$. Since the rank is a lower semicontinuous function, the rank of $\frac{\partial}{\partial(u,v)}T_{(w,(u,v))}(x)$ is equal to $d$ in a neighborhood of $\bar{u}$, $\bar{v}$, $\bar{w}$, $x_0$.
For $(u,v)$ we define the mapping $Q=Q_{x,w}$ by the formula
\[
Q(u,v)=(T_{(w,(u,v))}(x),v).
\]
Consequently, the determinant of $\left[\frac{\partial}{\partial(u,v)}Q\right]$ is nonzero in a neighborhood of $\bar{u}$, $\bar{v}$, $\bar{w}$, $x_0$.

We can rewrite $\mathcal{K}^n$ in the form
\[
\mathcal{K}^n(x,B)=\int_{\Theta^n\times(0,\infty)^n} 1_{B\times(0,\infty)^{n-d}}(Q(u,v))k_{(w,(u,v))}(x) \nu^n(dw) du dv
\]
for all $x\in E$ and $B\in\mathcal{B}(E)$.
Using continuity, we can find a positive constant $c$ and open sets $U_{x_0}\subset E$, $U_{\bar{u}}\subset(0,\infty)^d$, $U_{\bar{v}}\subset(0,\infty)^{n-d}$ and $U_{\bar{w}}\subset\Theta^n$ such that $k_{(w,(u,v))}(x)|\det[\frac{\partial}{\partial (u,v)}Q]|^{-1}\geq c$ for $x\in U_{x_0}$, $u\in U_{\bar{u}}$, $v\in U_{\bar{v}}$, $w\in U_{\bar{w}}$. We write $U_z$ to indicate that the point $z$ belongs to $U_z$. Moreover, for $y_0=T_{(\bar{w},(\bar{u},\bar{v}))}(x_0)$ we can find an open set $U_{y_0}\subset E$ such that $U_{y_0}\times U_{\bar{v}}\subset Q(U_{\bar{u}}\times U_{\bar{v}})$. Hence, for all $x\in U_{x_0}$ and for every set $B\in\mathcal{B}(E)$ we have
\[
\mathcal{K}^n(x,B)
\geq c\int_{U_{\bar{w}}}\int_{U_{\bar{u}} \times U_{\bar{v}}} 1_{B\times U_{\bar{v}}}(Q(u,v))\left|\det\left[\frac{\partial Q}{\partial (u,v)}\right]\right| du dv \nu^n(dw).
\]
Substituting $z_1=T_{(w,(u,v))}(x)$ and $z_2=v$ we obtain
\[
\mathcal{K}^n(x,B)
\geq c\int_{U_{\bar{w}}}\int_{Q(U_{\bar{u}} \times U_{\bar{v}})} 1_{B}(z_1)1_{U_{\bar{v}}}(z_2) dz_1 dz_2 \nu^n(dw).
\]
By the choice of the set $U_{y_0}$ we get
\[
\mathcal{K}^n(x,B)\geq c\int_{U_{\bar{w}}}\int_{U_{y_0} \times U_{\bar{v}}} 1_B(z_1)1_{U_{\bar{v}}}(z_2) dz_1 dz_2 \nu^n(dw)=c_0\int_{B}1_{U_{y_0}}(y)m(dy),
\]
where $c_0=cm_{n-d}(U_{\bar{v}})\nu^n(U_{\bar{w}})$ and $m_{n-d}(U_{\bar{v}})$ is the $n-d$ dimensional Lebesgue measure of the set $U_{\bar{v}}$ when $d<n$, and it is $1$, otherwise.
\end{proof}

To apply Lemma \ref{t:meyn} we have to calculate the rank of $\frac{\partial}{\partial s^n}T_{(\theta^n,s^n)}(x_0)$, which is the most difficult part. We next describe two possibilities how to make these calculations easier.

\begin{remark}\label{rem:small_s}
Using the continuity of derivatives with respect to $s_1,\ldots,s_n$ and taking the limit when each $s_i$ goes to zero from the right,  the limit of the derivative $\frac{\partial}{\partial s^n}T_{(\theta^n,s^n)}(x_0)$ becomes of the form
\begin{equation}\label{eq:sto0}
							\left [T'_{\theta_n}(y_{n-1})\ldots T'_{\theta_1}(y_0)g(y_0) \left|
							T'_{\theta_n}(y_{n-1})\ldots T'_{\theta_2}(y_1)g(y_1)\right|
							\cdots\left|
							T'_{\theta_n}(y_{n-1})g(y_{n-1})\right.
							\right],
\end{equation}
where $y_0=x_0$ and $y_i$ for $i=1,2,\ldots,n$ is given inductively by $y_i=T_{\theta_{i}}(y_{i-1})$.
Since the transformations $T_{\theta}$, $\theta\in\Theta$, and the mapping $g$ are explicitely defined, the rank of the matrix in \eqref{eq:sto0} can be obtained much easier then the rank of $\frac{\partial}{\partial s^n}T_{(\theta^n,s^n)}(x_0)$.
Moreover, lower semicontinuity of the rank allows us to find $s^n$ with positive coordinates.
\end{remark}

\begin{remark}\label{rem:theta}
Suppose that $\Theta$ is an open subset of $\mathbb{R}^k$ for some positive $k$ and $\nu$ is the Lebesgue measure. Assume also that transformations $(\theta^n,s^n,x)\mapsto T_{(\theta^n,s^n)}(x)$ are continuously differentiable.
Then, for a given $x\in E$ we can consider the derivative of the transformation $(\theta^n,s^n)\mapsto T_{(\theta^n,s^n)}(x)$, which can be written as
\[
\frac{\partial T_{(\theta^n,s^n)}(x)}{\partial (\theta^n,s^n)}=\left[
							\frac{\partial T_{(\theta^n,s^n)}(x)}{\partial (\theta_1,s_1)}\left|
							\frac{\partial T_{(\theta^n,s^n)}(x)}{\partial (\theta_2,s_2)}\right|
							\cdots \left|
							\frac{\partial T_{(\theta^n,s^n)}(x)}{\partial (\theta_n,s_n)}\right.
							\right].
\]
Lemma \ref{t:meyn} remains true under the assumption that the rank of the matrix  $\frac{\partial}{\partial (\theta^n,s^n)}T_{(\theta^n,s^n)}(x)$, instead of $\frac{\partial}{\partial s^n}T_{(\theta^n,s^n)}(x)$, is equal to $d$.
As in \cite{meyn91}, we can introduce the notation
\begin{equation}\label{e:XiPsi}
\begin{split}
\Xi_{n} &:= \Xi_{n}(x,(\theta^{n+1},s^{n+1})) = \left[\frac{\partial T_{(\theta,s)}(y)}{\partial y}\right]_{\substack{y=T_{(\theta^n,s^n)}(x)\;\;\\\theta=\theta_{n+1}, s=s_{n+1}}}, \\
\Psi_{n} &:= \Psi_{n}(x,(\theta^{n+1},s^{n+1})) = \left[\frac{\partial T_{(\theta,s)}(y)}{\partial (\theta,s)}\right]_{\substack{y=T_{(\theta^n,s^n)}(x)\;\;\\\theta=\theta_{n+1}, s=s_{n+1}}},
\end{split}
\end{equation}
where the derivatives are evaluated at $T_{(\theta^n,s^n)}(x)$ and for $\theta=\theta_{n+1}, s=s_{n+1}$.
Here $T_{(\theta^n,s^n)}(x) = x$ for $n=0$. Then the matrix $\frac{\partial}{\partial (\theta^n,s^n)}T_{(\theta^n,s^n)}(x)$ can be rewritten in the form
\[
\frac{\partial T_{(\theta^n,s^n)}(x)}{\partial (\theta^n,s^n)} = \left[ \Xi_{n-1} \cdots \Xi_{1}\Psi_{0} | \Xi_{n-1} \cdots \Xi_{2}\Psi_{1} | \cdots | \Xi_{n-1}\Psi_{n-2} | \Psi_{n-1} \right].
\]
\end{remark}

Now we provide sufficient conditions for which the assumptions of Theorem \ref{t:inv} are satisfied for the transition operator $K$ corresponding to $\mathcal{K}$ as defined in \eqref{eq:kernel_K}.
For each $x\in E$ we define the set
\begin{equation}\label{eq:O(x)}
\begin{split}
\mathcal{O}^+(x)=\{  T_{(\theta^n,s^n)}(x): \ & \text{the rank of}\ \frac{\partial T_{(\theta^n,s^n)}(x)}{\partial s^n}\ \text{is}\ d \text{ and}\\
														& k_{(\theta^n,s^n)}(x)>0 \text{ for } (\theta^n,s^n) \in \Theta^n\times (0,\infty)^n,\ n\geq1\}.
\end{split}
\end{equation}

\begin{corollary}\label{c:stcom}
Assume that $\mathcal{O}^+(x)\neq\emptyset$ for every $x\in E$. Suppose also that there is no $K$-absorbing sets.
Then either $K$ is sweeping with respect to compact subsets of $E$ or $K$ has a unique invariant density $f_*$. In the latter case, $f_*>0$~a.e.
\end{corollary}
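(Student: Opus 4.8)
The plan is to verify the two hypotheses of Theorem~\ref{t:inv} for the transition operator $K$ and then invoke that theorem directly. Since here $E$ is a Borel subset of $\mathbb{R}^d$ equipped with its Borel $\sigma$-algebra and the Lebesgue reference measure $m$, the metric-space framework of Theorem~\ref{t:inv} is in force, so it suffices to check conditions (a) and (b) of that theorem. Condition (a), the absence of $K$-absorbing sets, is exactly one of our standing assumptions, so no work is needed there; the content lies in establishing condition (b).

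To verify (b), I would fix an arbitrary $x_0\in E$. Because $\mathcal{O}^+(x_0)\neq\emptyset$ by hypothesis, there is some $(\theta^n,s^n)\in\Theta^n\times(0,\infty)^n$ for which $k_{(\theta^n,s^n)}(x_0)>0$ and the rank of $\frac{\partial}{\partial s^n}T_{(\theta^n,s^n)}(x_0)$ equals $d$. These are precisely the hypotheses of Lemma~\ref{t:meyn}, so applying that lemma yields a constant $c_0>0$ together with open sets $U_{x_0}\ni x_0$ and $U_{y_0}\ni y_0=T_{(\theta^n,s^n)}(x_0)$ such that
\[
\mathcal{K}^n(x,B)\geq c_0\,1_{U_{x_0}}(x)\,m(B\cap U_{y_0})
\]
for all $x\in E$ and all $B\in\mathcal{B}(E)$.

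It then remains to recast this minorization into the exact form demanded by condition (b). I would set $\eta=c_0\,1_{U_{y_0}}$ and choose $\delta>0$ small enough that $B(x_0,\delta)\subseteq U_{x_0}$, which is possible since $U_{x_0}$ is open and contains $x_0$. As $U_{y_0}$ is a nonempty open subset of $\mathbb{R}^d$ it has positive Lebesgue measure, so $\int\eta(y)\,m(dy)=c_0\,m(U_{y_0})>0$. Using $m(B\cap U_{y_0})=\int_B 1_{U_{y_0}}(y)\,m(dy)$ and $1_{U_{x_0}}(x)\geq 1_{B(x_0,\delta)}(x)$, the bound above gives
\[
\mathcal{K}^n(x,B)\geq 1_{B(x_0,\delta)}(x)\int_B\eta(y)\,m(dy),
\]
which is condition (b) (with $n$ depending on $x_0$, as the theorem permits). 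With both conditions of Theorem~\ref{t:inv} verified, that theorem yields the stated dichotomy, and in the non-sweeping case the invariant density $f_*$ is unique and strictly positive a.e. I do not expect a genuine obstacle, as Lemma~\ref{t:meyn} already carries out the substantive analytic estimate; the only point requiring care is this translation step, namely choosing $\eta$ and $\delta$ correctly and confirming $\int\eta\,dm>0$.
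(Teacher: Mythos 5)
Your proposal is correct and follows exactly the route the paper intends: the corollary is stated as an immediate consequence of Theorem~\ref{t:inv}, with condition (a) given by the no-$K$-absorbing-sets hypothesis and condition (b) supplied by Lemma~\ref{t:meyn} via the hypothesis $\mathcal{O}^+(x_0)\neq\emptyset$. Your translation step --- taking $\eta=c_0\,1_{U_{y_0}}$, shrinking to a ball $B(x_0,\delta)\subseteq U_{x_0}$, and noting $m(U_{y_0})>0$ --- is precisely the (routine) bookkeeping the paper leaves implicit, with $n$ depending on $x_0$ as Theorem~\ref{t:inv} permits.
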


\begin{remark}\label{rem:K-abs}
Observe that if there is a non-trivial $K$-absorbing set, then there is a non-trivial set $B$ such that
\[
\bigcup_{n\geq1}\bigcup_{(\theta^n,s^n)\in\Theta^n\times(0,\infty)^n} T_{(\theta^n,s^n)}(B)\subset B.
\]
This may be rewritten as
\[
\bigcup_{x\in B}\mathcal{O}(x)\subset B,
\]
where $\mathcal{O}(x)=\bigcup_{n\geq1}\mathcal{O}_n(x)$ and
\[
\mathcal{O}_n(x)=\{T_{(\theta^n,s^n)}(x):\ (\theta^n,s^n)\in\Theta^n\times(0,\infty)^n\},\quad n\ge 1.
\]
\end{remark}

Once we know that a unique invariant density exists for the operator $K$, we can use Corollary \ref{cor:as_stab} to prove asymptotic stability of the semigroup $\{P(t)\}_{t\geq0}$. We need to check that the semigroup $\{P(t)\}_{t\geq0}$ is partially integral. Our next result gives a simple condition for that.

\begin{lemma}\label{t:meyncont}
Let $x_0\in E$, $t>0$ and $n\geq1$. Define
\[
\Delta_t^n=\{s^n=(s_1,\ldots,s_n)\in (0,\infty)^n: s(n):=s_1+\ldots+s_n<t\}
\]
and assume that there exists $(\theta^n,s^n) \in \Theta^n\times\Delta_t^n$ such that $k_{(\theta^n,s^n)}(x_0)>0$ and the rank of $\frac{\partial}{\partial s^n}\pi_{t-s(n)}T_{(\theta^n,s^n)}(x_0)$ is equal to $d$. Then there exist a constant $c_0>0$ and open sets $U_{x_0}$, $U_{y_{0}}$ containing $x_{0}$ and $y_0=\pi_{t-s(n)}T_{(\theta^{n},s^{n})}(x_0)$, respectively, such that for all $B\in\mathcal{B}(E)$ and $x\in E$
\begin{equation}\label{e:meyncont}
\Px(X(t)\in B) \geq c_0 1_{U_{x_0}}(x) m(B\cap U_{y_0}).
\end{equation}
In particular, the semigroup $\{P(t)\}_{t\geq0}$ is partially integral.
\end{lemma}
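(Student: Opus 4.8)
The plan is to reduce the statement to Lemma~\ref{t:meyn} by first extracting from $\Px(X(t)\in B)$ the contribution of trajectories that make exactly $n$ jumps before time $t$. Conditioning on the number of jumps in $[0,t]$ and keeping only the $n$-jump term, I would unwind the construction of the minimal PDMP: the inter-jump times $s_1,\dots,s_n$ and marks $\theta_1,\dots,\theta_n$ contribute exactly the weight $k_{(\theta^n,s^n)}(x)$ appearing in the iterated kernel $\mathcal K^n$, after which the process flows for the remaining time $t-s(n)>0$ without jumping, contributing the survival factor $\exp\{-\int_0^{t-s(n)}\varphi(\pi_r T_{(\theta^n,s^n)}(x))\,dr\}$ and ending at $\pi_{t-s(n)}T_{(\theta^n,s^n)}(x)$. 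Because making exactly $n$ jumps forces $t<t_\infty$, this contribution is a lower bound for both $\Px(X(t)\in B)$ and $\Px(X(t)\in B,\,t<t_\infty)$, giving
\[
\Px(X(t)\in B)\ \ge\ \int_{\Theta^n\times\Delta_t^n}1_B\bigl(\pi_{t-s(n)}T_{(\theta^n,s^n)}(x)\bigr)\,\widetilde{k}_{(\theta^n,s^n)}(x)\,\nu^n(d\theta^n)\,ds^n,
\]
where $\widetilde{k}_{(\theta^n,s^n)}(x)=k_{(\theta^n,s^n)}(x)\exp\{-\int_0^{t-s(n)}\varphi(\pi_r T_{(\theta^n,s^n)}(x))\,dr\}$.

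This integral is of exactly the form treated in the proof of Lemma~\ref{t:meyn}, with the transformation $T_{(\theta^n,s^n)}$ replaced by $\Phi_{(\theta^n,s^n)}(x):=\pi_{t-s(n)}T_{(\theta^n,s^n)}(x)$, the weight $k$ replaced by $\widetilde{k}$, and the integration in $s^n$ restricted to the open set $\Delta_t^n$. I would then transcribe that argument: using the rank hypothesis I select $d$ coordinates $u=(s_{i_1},\dots,s_{i_d})$ of $s^n$ for which the derivative of $u\mapsto\Phi_{(\theta^n,s^n)}(x_0)$ is invertible, call the remaining coordinates $v$, set $Q(u,v)=(\Phi_{(\theta^n,s^n)}(x),v)$, and change variables by $z_1=\Phi_{(\theta^n,s^n)}(x)$, $z_2=v$ to produce the factor $m(B\cap U_{y_0})$. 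Only two points differ from Lemma~\ref{t:meyn} and both are immediate: $\widetilde{k}_{(\theta^n,s^n)}(x_0)>0$ holds because the survival exponential is strictly positive, and the condition $s(n)<t$ defining $\Delta_t^n$ is open, hence preserved on a neighborhood of the base point; both are absorbed by shrinking the neighborhoods $U_{x_0},U_{\bar u},U_{\bar v},U_{\bar w}$ exactly as there, after which continuity and positivity of $\widetilde{k}$ and of the relevant Jacobian deliver the constant $c_0>0$ and the bound~\eqref{e:meyncont}.

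For the final assertion I would integrate~\eqref{e:meyncont} (in its $\{t<t_\infty\}$ form) against an arbitrary density $f$ and apply part~(2) of Theorem~\ref{thm:exist}; taking $U_{x_0},U_{y_0}$ bounded one gets
\[
\int_B P(t)f(y)\,m(dy)\ \ge\ c_0\,m(B\cap U_{y_0})\int_{U_{x_0}}f(x)\,m(dx)\qquad\text{for all }B\in\mathcal{B}(E),
\]
hence $P(t)f(x)\ge c_0\,1_{U_{y_0}}(x)\int_{U_{x_0}}f(y)\,m(dy)$ for $m$-a.e.\ $x$. This is precisely $P(t)f(x)\ge\int_E p(x,y)f(y)\,m(dy)$ with $p(x,y)=c_0\,1_{U_{y_0}}(x)1_{U_{x_0}}(y)$, and since $U_{x_0},U_{y_0}$ are nonempty open subsets of $E$ they have positive finite Lebesgue measure, so $\int_E\int_E p(x,y)\,m(dx)\,m(dy)>0$ and $P(t)$ is partially integral. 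I expect the main obstacle to be the first step, namely the careful bookkeeping that identifies the $n$-jump contribution together with its final-interval survival factor and confirms that the successive inter-jump and mark densities telescope into $\widetilde{k}_{(\theta^n,s^n)}(x)$; the geometric change-of-variables step is then a direct transcription of Lemma~\ref{t:meyn}.
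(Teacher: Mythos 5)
Your proposal is correct and follows essentially the same route as the paper: the paper likewise isolates the event $\{t_n\le t<t_{n+1}\}$, writes $\Px(\pi_{t-t_n}X(t_n)\in B,\ t_n\le t<t_{n+1})$ as the integral over $\Theta^n\times\Delta_t^n$ of $1_B(\pi_{t-s(n)}T_{(\theta^n,s^n)}(x))\,\psi_{t-s(n)}(T_{(\theta^n,s^n)}(x))\,k_{(\theta^n,s^n)}(x)$ (your $\widetilde{k}$ is exactly $\psi_{t-s(n)}\cdot k$), and then repeats the change-of-variables argument of Lemma~\ref{t:meyn}. Your observation that the $n$-jump event already forces $t<t_\infty$, so the bound applies directly to $\Px(X(t)\in B,\,t<t_\infty)$ and hence to $P(t)$ via part~(2) of Theorem~\ref{thm:exist}, is a slightly cleaner bookkeeping of the same idea (the paper instead starts from points with $\Px(t_\infty<\infty)=0$), and your explicit derivation of the kernel $p(x,y)=c_0 1_{U_{y_0}}(x)1_{U_{x_0}}(y)$ just spells out what the paper leaves implicit.
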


\begin{proof}
Observe that if $x$ is such that
$\Px(t_\infty<\infty)=0$, then
\[
\begin{split}
\Px(X(t)\in B)&=\sum_{k=0}^\infty \Px(X(t)\in B, t_k\le t< t_{k+1}).
\end{split}
\]
Thus, to check whether condition \eqref{e:meyncont} is satisfied, it is sufficient to prove that
\begin{equation}\label{e:contproof}
\Px(\pi_{t-t_n}X(t_n)\in B, t_n\leq t< t_{n+1})\geq c_0 1_{U_{x_0}}(x) m(B\cap U_{y_0}).
\end{equation}
Since we have
\begin{equation*}
\Px(\pi_{t-t_n}X(t_n)\in B, t_n\leq t< t_{n+1})=
\int_{\Theta^n\times (0,\infty)^n} 1_{\Delta_t^n}(s^n)1_B(\pi_{t-s(n)}
T_{(\theta^n,s^n)}(x))\psi_{t-s(n)}(T_{(\theta^n,s^n)}(x))k_{(\theta^n,s^n)}(x)\nu^n(d\theta^n)ds^n,
\end{equation*}
where $\phi$ is a positive continuous function defined by
$\psi_t(x)=e^{-\int_0^t\varphi(\pi_r x)dr}$ for $x\in E$, $t\ge 0$, we can obtain \eqref{e:contproof} in an analogous way as in the proof of Lemma \ref{t:meyn}.
\end{proof}

As in Remarks \ref{rem:small_s} and \ref{rem:theta}, we can simplify the calculation of the rank of $\frac{\partial}{\partial s^n}\pi_{t-s(n)}T_{(\theta^n,s^n)}(x_0)$.

\begin{remark} \label{rem:cont}
Analogously to Remark \ref{rem:small_s}, the limit of the derivative $\frac{\partial}{\partial s^n}\pi_{t-s(n)}T_{(\theta^n,s^n)}(x_0)$ when $s_1,\ldots,s_n,t$ go to zero, is of the form
\begin{equation}\label{eq:small_s}
							\left[
							T'_{\theta_n}(y_{n-1})\ldots T'_{\theta_1}(y_0)g(y_0)\!-\!g(y_n)\left|
							\cdots\right|
							T'_{\theta_n}(y_{n-1})g(y_{n-1})\!-\!g(y_n)
							\right],
\end{equation}
where $y_0=x_0$ and $y_i=T_{\theta_{i}}(y_{i-1})$ for $i=1,2,\ldots,n$.
A similar approach to check this "rank condition" is used in \cite[Proposition 3.1]{pichorrudnicki00} and \cite{rudnickipichortyran02} as well as in \cite{bakhtinhurth12} and \cite{benaim12}.

In the case when $\Theta$ is an open subset of $\mathbb{R}^k$ and we can take derivative with respect to $\theta\in\Theta$ we have
\[
\frac{\partial \pi_{t-s(n)}T_{(\theta^n,s^n)}(x)}{\partial (\theta^n,s^n)}=\left[
							\frac{\partial \pi_{t-s(n)}T_{(\theta^n,s^n)}(x)}{\partial (\theta_1,s_1)}\right|
							\cdots \left|
							\frac{\partial \pi_{t-s(n)}T_{(\theta^n,s^n)}(x)}{\partial (\theta_n,s_n)}
							\right],
\]
for $x\in E$.
Using the notation as in \eqref{e:XiPsi} and defining additionally the derivatives
\begin{equation*}
\begin{split}
&\Upsilon_{n}:=\Upsilon_{n}(x,(\theta^n,s^n),k)=
\left[\frac{\partial\pi_{s}y}{\partial (\theta_k,s_k)}\right]_{\substack{s=t-s(n)\,\quad\\ y=T_{(\theta^n,s^n)}(x)}}
=\left[0|-g(T_{(\theta^n,s^n)}(x))\right],\\
&\Upsilon_{x,n}:=\Upsilon_{x,n}(x,(\theta^{n},s^{n}))=
\left[\frac{\partial\pi_{s}y}{\partial y}\right]_{\substack{s=t-s(n)\,\quad\\ y=T_{(\theta^n,s^n)}(x)}},
\end{split}
\end{equation*}
we have
\begin{equation}\label{eq:theta_diff}
\frac{\partial \pi_{t-s(n)}T_{(\theta^n,s^n)}(x)}{\partial (\theta^n,s^n)} =
\left[ \Upsilon_{n} + \Upsilon_{x,n} \Xi_{n-1} \cdots \Xi_{1}\Psi_{0} | \cdots | \Upsilon_{n} + \Upsilon_{x,n} \Xi_{n-1}\Psi_{n-2} | \Upsilon_{n} + \Upsilon_{x,n} \Psi_{n-1} \right].
\end{equation}
\end{remark}

We will show how our results can be applied in one particular example in the next section.
We conclude this section with the idea how to write dynamical systems with random switching as studied in \cite{bakhtinhurth12,benaim12,pichorrudnicki00}, in our framework. Given a finite or countable set $I$, consider a family of locally Lipschitz functions $g^i\colon\mathbb{R}^d\to\mathbb{R}^d$, $i\in I$, and the differential equation
\begin{equation}\label{eq:diff_eq_g_i}
\left\{
\begin{array}{l}
x'(t) = g^{i(t)}(x(t)),\\
i'(t) = 0.
\end{array}
\right.
\end{equation}
We assume that there exists a set $M\subset\mathbb{R}^d$ such that for every $i_0\in I$ and $x_0\in M$ the solution $x(t)$ of $x'(t)=g^{i_0}(x(t))$ with initial condition $x(0)=x_0$ exists and that $x(t)\in M$ for all $t\geq 0$. We denote this solution by $\pi_t^{i_0}(x_0)$.
Then, the general solution of the system \eqref{eq:diff_eq_g_i} may be written in the form
\[
\pi_t(x_0,i_0)=(\pi_t^{i_0}(x_0),i_0),\quad(x_0,i_0)\in M\times I.
\]
This gives one semiflow on $E=M\times I$ which is generated by the differential equation
\[
(x'(t),i'(t))=g(x(t),i(t)),
\]
where the function $g\colon\mathbb{R}^d\times I\to\mathbb{R}^{d+1}$ is of the form
\[
g(x,i) = (g^i(x),0),\quad x\in\mathbb{R}^d,\ i\in I.
\]
Let $m$ be the product of the Lebesgue measure $m_d$ on $\mathbb{R}^d$ and the counting measure $\nu$ on $\Theta=I$.
We define the transformation $T_{j}\colon\mathbb{R}^d\times I\to \mathbb{R}^d\times I$, $j\in I$, by
\[
T_j(x,i) = (x,j),\quad x\in\mathbb{R}^d,\ i,j\in I.
\]
Each transformation is nonsingular with respect to $m$ since
\[
m(T_j^{-1}(B\times\{i\}))=\left\{\begin{array}{ll}
																	m_d(B)\nu(\{j\}) & \text{if } i=j,\\
																	0 & \text{if } i\neq j.
																	\end{array}\right.
\]
We assume that $q_j(x,i)$, $j\neq i$, are nonnegative continuous functions satisfying $\sum_{j\neq i} q_j(x,i)<\infty$ for all $i\in I$, $x\in\mathbb{R}^d$ . Then we can define the intensity function $\varphi$ by
\[
\varphi(x,i) = \sum_{j\neq i} q_j(x,i)
\]
and the densities $p_j$, $j\in I$, by $p_i(x,i) = 0$ and
\[
p_j(x,i)=
\left\{
\begin{array}{ll}
1, & \varphi(x,i)=0,\ j\neq i,\\
\frac{q_j(x,i)}{\varphi(x,i)}, & \varphi(x,i)\neq 0, j\neq i.
\end{array}
\right.
\]
As a particular example of dynamical systems with random switching, one can consider a standard birth-death process by taking $q_{i+1}(x,i)=b_i$, $q_{i-1}(x,i)=d_i$ and $q_j(x,i)=0$ for $j<i-1$ or $j>i+1$. Then $\varphi(x,i)=b_i+d_i<\infty$.

According to \eqref{eq:T,k}, we can write explicitly formulas for the density
\[
k_{(j,s)}(x,i)=q_j(\pi_s^ix,i)e^{-\int_{0}^{s}\varphi(\pi_r^ix,i) dr}
\]
and for the transformation
\[
T_{(j,s)}(x,i)=T_j(\pi_s^ix,i)=(\pi_s^ix,j).
\]
For each $n$ we get a general form of $T_{(\theta^n,s^n)}(x_0,i_0)$ for $\theta^n=(i_1,\ldots,i_n)$ and $s^n=(s_1,\ldots,s_n)$, which is
\[
T_{(\theta^n,s^n)}(x_0,i_0)=(\pi_{s_n}^{i_{n-1}}\circ\ldots\circ\pi_{s_2}^{i_1}\circ\pi_{s_1}^{i_0}x_0,i_n).
\]
This may be rewritten as
\[
T_{(\theta^n,s^n)}(x_0,i_0)=(x_n,i_n),
\]
where
\[
x_n=\pi_{s_n}^{i_{n-1}}\circ\ldots\circ\pi_{s_2}^{i_1}\circ\pi_{s_1}^{i_0}x_0=\pi_{s_n}^{i_{n-1}}(x_{n-1}).
\]
Using this notation we adjust the definition of the set in \eqref{eq:O(x)} as follows
\begin{equation*}
\begin{split}
\mathcal{O}^+(x_0,i_0)=\{(x_n,i_n)\in E:\ & \text{the rank of}\ \frac{\partial x_n}{\partial s^n}\ \text{is}\ d\text{ and}\\
														& q_{i_n}(x_n,i_{n-1})\ldots q_{i_1}(x_0,i_1)>0 \text{ for } i_1,\ldots,i_n\in I,\ s_1,\ldots,s_n>0,\ n\geq1\}.
\end{split}
\end{equation*}
 For such semiflow with jumps, we can modify the proof of Lemma \ref{t:meyn}, to get the next result for the corresponding operator $K$.
\begin{corollary}\label{c:jumps}
Assume that $\mathcal{O}^+(x,i)\neq\emptyset$ for every $(x,i)\in E=M\times I$. Suppose also that there is no $K$-absorbing sets.
Then either $K$ is sweeping with respect to compact subsets of $E$ or $K$ has a unique invariant density $f_*$. In the latter case, $f_*>0$ a.e. In particular, if $M$ is compact, then $K$ has a unique invariant density.
\end{corollary}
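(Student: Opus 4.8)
The plan is to mirror the proof of Corollary~\ref{c:stcom}, reducing the statement to an application of Theorem~\ref{t:inv} to the transition operator $K$ on $L^1(E,\mathcal{B}(E),m)$ with $E=M\times I$ and $m=m_d\times\nu$. The absence of $K$-absorbing sets required in Theorem~\ref{t:inv} is precisely the hypothesis, so all the work lies in verifying the local minorization condition, which I would obtain from a switching version of Lemma~\ref{t:meyn}.

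First I would record the form of the iterated kernel in the switching case: since $\nu$ is the counting measure, the integration over $\Theta^n=I^n$ in $\mathcal{K}^n$ collapses to a sum over index sequences, so
\[
\mathcal{K}^n((x,i_0),B)=\sum_{(i_1,\dots,i_n)\in I^n}\int_{(0,\infty)^n}1_B\big((x_n,i_n)\big)\,k_{(\theta^n,s^n)}(x,i_0)\,ds^n,
\]
where $x_n=\pi^{i_{n-1}}_{s_n}(x_{n-1})$ and the terminal index $i_n$ is frozen by the chosen sequence. Fixing $(x_0,i_0)$ with $\mathcal{O}^+(x_0,i_0)\neq\emptyset$, I pick an index sequence and times $s^n\in(0,\infty)^n$ with $q_{i_n}(x_n,i_{n-1})\cdots q_{i_1}(x_0,i_1)>0$ and $\mathrm{rank}\,(\partial x_n/\partial s^n)=d$. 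The crucial observation is that the change of variables in Lemma~\ref{t:meyn} now takes place entirely in the continuous coordinate: I select $d$ of the times $u=(s_{j_1},\dots,s_{j_d})$ so that $u\mapsto x_n$ has invertible derivative, set $Q(u,v)=(x_n,v)$ with $v$ the remaining times, and use lower semicontinuity of the rank together with continuity of the data to bound $k\,\big|\det[\partial Q/\partial(u,v)]\big|^{-1}$ below by a constant on a product neighborhood. Because the discrete index $i_n$ is held fixed and $\nu(\{i_n\})=1$, the substitution $z_1=x_n$ yields the minorization
\[
\mathcal{K}^n((x,i),B)\ \geq\ c_0\,1_{U_{x_0}}(x)\,1_{\{i_0\}}(i)\,m\big(B\cap(U_{y_0}\times\{i_n\})\big),
\]
where $U_{y_0}$ is a neighborhood of the continuous part of $T_{(\theta^n,s^n)}(x_0,i_0)$. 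Taking the metric on $E$ so that balls of radius $<1$ about $(x_0,i_0)$ lie in $U_{x_0}\times\{i_0\}$, this is exactly the minorization condition of Theorem~\ref{t:inv} with $\eta=c_0\,1_{U_{y_0}\times\{i_n\}}$.

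With both hypotheses of Theorem~\ref{t:inv} in hand, its conclusion gives the dichotomy: either $K$ is sweeping with respect to compact subsets of $E$, or $K$ has a unique invariant density $f_*$, necessarily strictly positive a.e. For the final assertion I would note that under the standing assumption~\eqref{eq:phi2} the semigroup $\{S(t)\}_{t\ge0}$ is strongly stable, so $K$ is stochastic and $\int_EK^nf\,dm=\|f\|=1$ for every density $f$. If $M$ is compact and $I$ is finite, then $E=M\times I$ is compact, so sweeping with respect to compact sets would force $\int_EK^nf\,dm\to0$, contradicting $\int_EK^nf\,dm=1$; hence the sweeping alternative is excluded and $K$ has the unique invariant density.

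I expect the main obstacle to be the faithful adaptation of Lemma~\ref{t:meyn}: one must verify that the entire argument there---the selection of $d$ free time variables, the invertibility of $\partial Q/\partial(u,v)$, the change of variables, and the resulting Lebesgue lower bound---survives when the state space carries the mixed measure $m_d\times\nu$ and the rank hypothesis constrains only the continuous block $\partial x_n/\partial s^n$, the discrete coordinate being transported deterministically to the terminal index $i_n$. Once this is in place, the remainder is a transcription of the proof of Corollary~\ref{c:stcom}.
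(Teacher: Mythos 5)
Your proposal is correct and takes essentially the same route the paper intends: the paper gives no separate proof of Corollary~\ref{c:jumps}, merely remarking that one modifies Lemma~\ref{t:meyn} to the product space $E=M\times I$ with the measure $m=m_d\times\nu$ (the counting measure collapsing the $\Theta^n$-integral to a sum over index sequences, the change of variables acting only in the time coordinates, and the discrete index transported deterministically), after which the dichotomy follows from Theorem~\ref{t:inv} exactly as in Corollary~\ref{c:stcom}. Your two supplementary observations are also sound and fill in what the paper leaves implicit: stochasticity of $K$ via strong stability of $\{S(t)\}_{t\ge 0}$ under~\eqref{eq:phi2} to rule out sweeping when $E$ is compact, and the explicit restriction to finite $I$ in the last step, which is the correct reading of the paper (whose intended switching setting, per the introduction, is a finite set $I$; for countably infinite $I$ the compactness argument would indeed fail).
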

To verify whether the rank of $\frac{\partial x_n}{\partial s^n}$ is equal to $d$, we may use either Remark \ref{rem:small_s} or Lie brackets as in \cite[Theorem~3]{bakhtinhurth12}, \cite[Theorem~4.4]{benaim12}.
It is worth to mention that in \cite{benaim12} it is assumed that the set $M$ is compact.

\section{A two dimensional model of gene expression with bursting}\label{sec:example}

In this section we study a particular example of a two dimensional PDMP $X(t)=(X_1(t),X_2(t))$ with values in
$E=[0,\infty)^2$. We let $X_1$ and $X_2$ denote the concentrations of mRNA and protein respectively. We assume
that the protein molecules undergo degradation at rate $\gamma_2$ and that the translation of proteins from mRNA
is at rate $\beta_2$. The mRNA molecules undergo degradation at rate $\gamma_1$ that is interrupted at random
times
\[
0<t_1<t_2<\ldots <t_n<t_{n+1}<\ldots
\]
when new molecules are being produced with intensity $\varphi$ depending at least on the current level $X_2$ of proteins. At each $t_k$ a random amount $\theta_k$ of mRNA molecules is produced, which is independent of everything else
and distributed according to a density $h$. Therefore, $p_{\theta}(x)=h(\theta)$ and the transformation $T_{\theta}$ is given by the formula
\[
T_{\theta}(x_1,x_2) = (\theta+x_1, x_2),\quad\theta\in(0,\infty).
\]
Hence, the jump kernel is  of the form
\[
\J ((x_1,x_2),B)=\int_{0}^\infty 1_{B}(\theta+x_1,x_2)h(\theta)d\theta,
\]
so that the transition operator $P$ is as follows
\[
Pf(x_1,x_2)=\int_{0}^{x_1}f(z,x_2)h(x_1-z)dz.
\]

The semiflow is defined by the solutions of the system of equations
\begin{equation*}
\frac{dx_{1}}{dt}=-\gamma_1 x_1,\quad
\frac{dx_{2}}{dt}=-\gamma_2x_2+\beta_2 x_1,
\end{equation*}
and it can be expressed by the formula
\[
\pi_t(x_1,x_2)=(x_1e^{-\gamma_1 t},x_2e^{-\gamma_2 t}+x_1\vartheta(t)),
\]
where
\begin{equation*}
\vartheta(t)=\frac{\beta_2}{\gamma_1-\gamma_2}(e^{-\gamma_2t}-e^{-\gamma_1 t}).
\end{equation*}
If $\gamma_1>\gamma_2$ then we have $\pi_t(E)\subseteq E$ for all $t\ge 0$ and the transformation $T_{(\theta,s)}$ is of the form
\[
T_{(\theta,s)}(x_1,x_2)=(\theta+x_1e^{-\gamma_1 s},x_2e^{-\gamma_2 s}+x_1\vartheta(s)).
\]
The assumption $\gamma_1>\gamma_2$ is biologically reasonable, see e.g. \cite{yvinec14} and references therein, were it was recalled that a fast process of mRNA degradation has been observed in bacterias, i.e. \textit{E. coli}.
The production of mRNA molecules can be described by exponential density with mean~$b$
\[
h(\theta)=\frac{1}{b}e^{-\theta/b},\quad \theta>0,
\]
while the intensity $\varphi$ is a Hill function depending only on the second coordinate,
\[
\varphi(x_1,x_2)=\frac{\kappa_1+\kappa_2x_2^N}{1+\kappa_3x_2^N},
\]
where $N,\kappa_1>0$ and $\kappa_2,\kappa_3\ge 0$ are constants. If  $\kappa_3=0$ we assume, additionally, that $N\leq1$ and $\gamma_2>{b\beta_2\kappa_2}/{(\gamma_1-\gamma_2)}$.
We show that the minimal semigroup $\{P(t)\}_{t\geq0}$ is asymptotically stable.

Taking $\Theta=(0,\infty)$ with $\nu$ being the Lebesgue measure on $(0,\infty)$, we can express the stochastic kernel $\mathcal{K}$ as in \eqref{eq:kernel_K}.
With the help of Corollary \ref{c:stcom} we prove that the transition operator $K$ corresponding to $\mathcal{K}$ has a unique invariant density, which is strictly positive a.e.
First, we need to check the assumptions of Corollary \ref{c:stcom}.
The function $k_{(\theta,s)}(x)$ defined as in \eqref{eq:T,k} is strictly positive for all $x\in E$ and $\theta, s>0$, since both $\varphi$ and $h$ are strictly positive.
Taking into account Remark \ref{rem:theta}, we consider the derivative $\frac{\partial}{\partial (\theta^n,s^n)}T_{(\theta^n,s^n)}(x)$ instead of $\frac{\partial}{\partial s^n}T_{(\theta^n,s^n)}(x)$.
We have
\[
\Xi_{k}=\left[
\begin{array}{cc}
e^{-\gamma_1 s_{k+1}}, & 0 \\
\vartheta(s_{k+1}), & e^{-\gamma_2 s_{k+1}}
\end{array}
\right],\quad
\Psi_{k}=\left[
\begin{array}{cc}
1, & \multirow{2}{*}{$g(\pi_{s_{k+1}}  T_{(\theta^{k},s^{k})}(x))$} \\
0, &
\end{array}
\right],
\]
where
\[
g(x)=\left(
								\begin{array}{c}
								-\gamma_1 x_1\\
								-\gamma_2 x_2 + \beta_2 x_1
								\end{array}
						\right)\quad \text{for } x=(x_1,x_2).
\]
For arbitrary $\theta_1,s_1>0$ we can calculate
\[
\frac{\partial T_{(\theta^1,s^1)}(x)}{\partial (\theta^1,s^1)} = [\Psi_{0}] = \left[
\begin{array}{cc}
1, & -\gamma_1 x_1 e^{-\gamma_1 s_1}\\
0, & -\gamma_2 x_2 e^{-\gamma_2 s_1} + x_1\frac{\beta_2}{\gamma_1-\gamma_2}(\gamma_1 e^{-\gamma_1 s_1} - \gamma_2 e^{-\gamma_2 s_1})
\end{array}
\right].
\]
The rank of $\frac{\partial}{\partial (\theta^1,s^1)}T_{(\theta^1,s^1)}(x)$ is equal to $2$ if and only if
\[
-\gamma_2 x_2 e^{-\gamma_2 s_1} + x_1\frac{\beta_2}{\gamma_1-\gamma_2} (\gamma_1 e^{-\gamma_1 s_1} - \gamma_2 e^{-\gamma_2 s_1})\neq0.
\]
If this condition does not hold we need to consider $T_{(\theta_2,s_2)}(T_{(\theta_1,s_1)}(x))$. We have
\[
\frac{\partial T_{(\theta^2,s^2)}(x)}{\partial (\theta^2,s^2)} = [\Xi_1\Psi_{0} | \Psi_1] \\
= \left[
\begin{array}{cc|cc}
e^{-\gamma_1 s_2}, & e^{-\gamma_1 s_2}g_1(\pi_{s_1}x) & 1, & g_1(\pi_{s_2} T_{(\theta^1,s^1)}(x))\\
\vartheta(s_2), & \vartheta(s_2)g_1(\pi_{s_1}x) + e^{-\gamma_2 s_2}g_2(\pi_{s_1}x) & 0, & g_1(\pi_{s_2} T_{(\theta^1,s^1)}(x))
\end{array}
\right]
\]
and, looking at the first and the third column, we see that the rank of $\frac{\partial}{\partial (\theta^2,s^2)} T_{(\theta^2,s^2)}(x)$ is equal to $2$.
This implies that $\mathcal{O}^+(x)\neq\emptyset$ for every $x\in E$.

We now show that there is no $K$-absorbing sets. By Remark \ref{rem:K-abs} it is enough to show that $(0,\infty)^2\subset\mathcal{O}(x)$ for $m$-a.e. $x\in E$.
Assume first that the point $x=(x_1,x_2)$ is such that $x_2<\beta_2x_1/\gamma_2$. Then its trajectory has the shape shown in Figure \ref{fig:semiflow1}. Then the grey area covers the set $\mathcal{O}_1(x)$ and we see that consecutive iterates give the rest.
Suppose now that $x_2>\beta_2x_1/\gamma_2$. Then the set $\mathcal{O}_1(x)$ is as in Figure \ref{fig:semiflow2}.

\begin{figure}[htb]
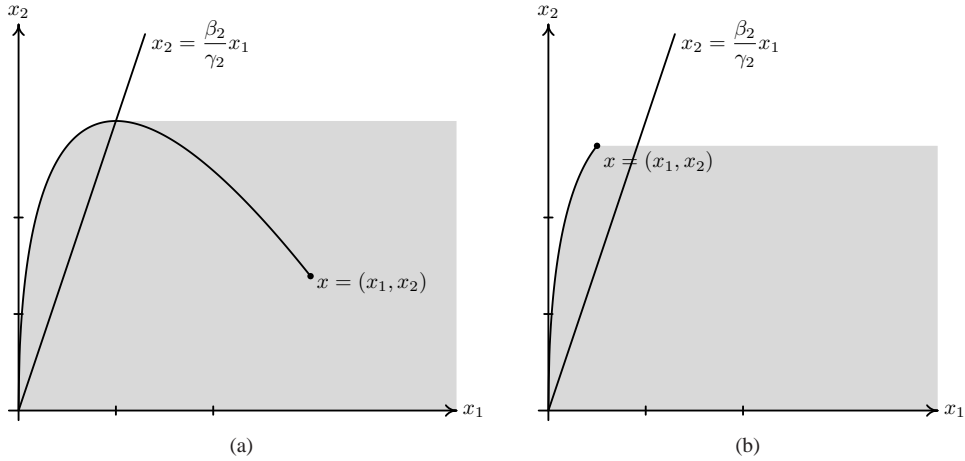

\centering
\subfigure[]{\includegraphics[scale=0.8]{portret1.mps}\label{fig:semiflow1}}
\hspace{0.5cm}
\subfigure[]{\includegraphics[scale=0.8]{portret2.mps}\label{fig:semiflow2}}
\caption{A graphical representation of the set $\mathcal{O}_1(x)$}\label{fig:semiflow}
\end{figure}

Corollary \ref{c:stcom} implies that either $K$ is sweeping with respect to compact sets or $K$ has a unique invariant density~$f_*$. To exclude sweeping, we use Proposition \ref{l:sweeping} for the operator $K$ and we take
\[
V(x)=V(x_1,x_2)=x_1\frac{\beta_2}{\gamma_1-\gamma_2}+x_2.
\]
We have
\[
V(X(t_1))-V(X(0))=\frac{\beta_2}{\gamma_1-\gamma_2}\theta_1-V(X(0))(1-e^{-\gamma_2t_1}).
\]
Since $t_1$ has the distribution function as in \eqref{eq:cdf}, we obtain
\[
\Exx(1-e^{-\gamma_2 t_1})=\gamma_2 \int_0^\infty e^{-\gamma_2 t}e^{-\int_{0}^t\varphi(\pi_s(x))ds}dt.
\]
Hence, we get
\begin{equation}\label{eq:V}
\int_EV(y)\mathcal{K}(x,dy)-V(x)=\Exx(V(X(t_1))-V(X(0)))\\
=\int_{0}^{\infty}W(t,x)e^{-\int_0^t\varphi(\pi_s(x))ds}dt,
\end{equation}
where
\[
W(t,x)=\frac{b\beta_2}{\gamma_1-\gamma_2}\varphi(\pi_tx)-V(x)\gamma_2e^{-\gamma_2t}.
\]
Notice that $W$ is bounded from above by a constant and
that $W(t,x)$ tends to $-\infty$ as $\|x\|\to\infty$ for every $t$.
Since the function $\varphi$ has a positive lower bound $\underline{\varphi}$, we obtain
\[
\int_{0}^{\infty}e^{-\int_0^t\varphi(\pi_s(x))ds} dt\leq\frac{1}{\underline{\varphi}} \quad \text{for all } x\in E.
\]
From Fatou's Lemma it follows that
\begin{equation}\label{eq:Fatou}
\limsup_{\left\|x\right\|\rightarrow\infty}\int_{0}^{\infty}W(t,x)e^{-\int_0^t\varphi(\pi_s(x))ds} dt
< 0.
\end{equation}
The function in \eqref{eq:V} is continuous, thus bounded on compact sets. Consequently, \eqref{eq:Fatou} implies that condition \eqref{cond:Lap} is satisfied and completes the proof that $K$ has a unique invariant density.

Now we look at the process $X=\{X(t)\}_{t\geq0}$. The matrices $\Upsilon_n$ and $\Upsilon_{x,n}$ from Remark \ref{rem:cont} are of the form
\begin{equation*}
\Upsilon_{n}=\left[\begin{array}{cc}
										0, & \multirow{2}{*}{$-g(T_{(\theta^{n},s^{n})}(x))$} \\
										0,
										\end{array}\right],\quad
\Upsilon_{x,n}=\left[\begin{array}{cc}
										e^{-\gamma_1(t-s(n))}, & 0 \\
										\vartheta(t-s(n)), & e^{-\gamma_2(t-s(n))}
										\end{array}\right].										
\end{equation*}
Hence $\frac{\partial}{\partial(\theta^2,s^2)}\pi_{t-s(2)}T_{(\theta^2,s^2)}(x)$ can be expressed by
\[
\begin{split}
\frac{\partial\pi_{t-s(2)}T_{(\theta^2,s^2)}(x)}{\partial(\theta^2,s^2)}&
=[\Upsilon_2+\Upsilon_{x,2}\Xi_1\Psi_0|\Upsilon_2+\Upsilon_{x,2}\Psi_1]\\
&=\left[\begin{array}{cc|cc}
e^{-\gamma_1(t-s_1)}, & * & e^{-\gamma_1(t-s(2))}, & *\\
e^{-\gamma_1s_2}\vartheta(t-s(2))+e^{-\gamma_2(t-s(2))}\vartheta(s_2), & * & \vartheta(t-s(2)), & *
\end{array}\right],
\end{split}
\]
where the first and the third column are linearly independent and the remaining columns are not important for the calculation. It is worth to notice that we need to use \eqref{eq:theta_diff} instead of the matrix in \eqref{eq:small_s} since its every two columns are linearly dependent. This proves that Lemma \ref{t:meyncont} holds, in other words, the semigroup $\{P(t)\}_{t\geq 0}$ corresponding to the process $X$ is partially integral.
We conclude from Corollary \ref{cor:as_stab} that the semigroup $\{P(t)\}_{t\ge 0}$ is asymptotically stable.


\section*{References}

\end{document}